\documentclass[reqno, 10pt]{article}

\usepackage{fullpage}

\usepackage[centertags]{amsmath}
\usepackage{amsmath}
\usepackage{amsfonts}
\usepackage{amssymb}
\usepackage{amsthm}
\usepackage{enumerate}
\usepackage{subfig}
\usepackage{hyperref}

\usepackage{algorithm,algorithmic}
\usepackage{dsfont}

\usepackage{newlfont}
\usepackage{color}

\usepackage{authblk}

\usepackage{stmaryrd}
\SetSymbolFont{stmry}{bold}{U}{stmry}{m}{n}

\usepackage{bbm}
\include{amsthm_sc}

\usepackage{stmaryrd}
\usepackage{mathrsfs}

\usepackage{fancyhdr}
\usepackage{graphicx}
\usepackage{fancybox}
\usepackage{setspace}
\usepackage{cleveref}

\newtheorem{thm}{Theorem}
\newtheorem{cor}[thm]{Corollary}
\newtheorem{lem}[thm]{Lemma}
\newtheorem{prop}[thm]{Proposition}

\newtheorem{defn}[thm]{Definition}
\newtheorem{assump}[thm]{Assumption}

\newtheorem{rem}[thm]{Remark}
\newtheorem{nrem}[thm]{Notational Remark}


\newcommand{\R} {\mathbb{R}}
\newcommand{\C} {\mathbb{C}}
\newcommand{\N} {\mathbb{N}}

\newcommand{\E} {\mathbb{E}}

\newcommand{\p} {\mathbb{P}}

\DeclareMathOperator{\diag}{diag}

\DeclareMathOperator{\Tr}{Tr}

\DeclareMathOperator{\err}{\mathrm{err}}
\DeclareMathOperator{\erfc}{\mathrm{erfc}}


\newcommand{\caL}{{\mathcal L}}

\newcommand{\caN}{{\mathcal N}}
\newcommand{\caO}{{\mathcal O}}
\newcommand{\caP}{{\mathcal P}}


\newcommand{\bbC}{{\mathbb C}}

\newcommand{\bbP}{{\mathbb P}}

\newcommand{\bbR}{{\mathbb R}}



\newcommand{\bsx}{{\boldsymbol x}}

\newcommand{\bsH}{{\boldsymbol H}}

\newcommand{\wt}{\widetilde}

\newcommand{\beq}{ \begin{equation} }
\newcommand{\eeq}{ \end{equation} }

\newcommand{\dd}{\mathrm{d}}
\newcommand{\ii}{\mathrm{i}}

\renewcommand{\P}{\bbP}

\newcommand{\tM}{\widetilde{M}}

\newcommand{\fh}{F^H}
\newcommand{\gh}{G^H}

\newcommand\absv[1]{\left\vert\,#1\,\right\vert}

\makeatletter
\def\blfootnote{\xdef\@thefnmark{}\@footnotetext}
\makeatother

\numberwithin{equation}{section} 
\numberwithin{thm}{section}

\title{Weak Detection in the Spiked Wigner Model with General Rank}

\author{Ji Hyung Jung\footnote{Department of Mathematical Sciences, KAIST, Daejeon, 34141, Korea
		\newline email: \texttt{jhjung66@kaist.ac.kr}}
	, Hye Won Chung\footnote{School of Electrical Engineering, KAIST, Daejeon, 34141, Korea
		\newline email: \texttt{hwchung@kaist.ac.kr}}
	, and Ji Oon Lee\footnote{Department of Mathematical Sciences, KAIST, Daejeon, 34141, and School of Mathematics, KIAS, Seoul, 02455, Korea
		\newline email: \texttt{jioon.lee@kaist.edu}}}

\date{\today}

\begin{document}
	
	\maketitle
	
\begin{abstract}%
	We study the statistical decision process of detecting the signal from a `signal+noise' type matrix model with an additive Wigner noise. We propose a hypothesis test based on the linear spectral statistics of the data matrix, which does not depend on the distribution of the signal or the noise. The test is optimal under the Gaussian noise if the signal-to-noise ratio is small, as it minimizes the sum of
	the Type-I and Type-II errors. Under the non-Gaussian noise, the test can be improved with an entrywise transformation to the data matrix. We also introduce an algorithm that estimates the rank of the signal
	when it is not known a priori.
	
\end{abstract}


\section{Introduction}\label{sec:intro}
The spiked Wigner model is one of the most natural low-rank models of `signal-plus-noise' type. In this model, the data matrix is of the form
\beq \label{eq:spiked_Wigner}
M = \sqrt{\lambda} XX^T + H,
\eeq
where the spike $X$ is an $N \times k$ matrix whose column vectors are $L^2$-normalized, $H$ is an $N \times N$ Wigner matrix (see Definition \ref{def:Wigner}), and $\lambda$ corresponds to the signal-to-noise ratio (SNR). In this paper, we focus on the hypothesis tests for detecting presence of a signal and determining the rank of a signal (when we have some information of the rank), which are called the weak detection, from a given spiked Wigner matrix where SNR $\lambda$ is below a threshold so that a reliable detection is not feasible. We prove the optimal error of the weak detection in certain cases and propose a universal test that achieve the optimal error. We also introduce a test to estimate the rank of the spike when the prior information on the rank is not known.

\textbf{Rank-$1$ spiked Wigner matrix:} 
In the simplest case of the spiked Wigner model, the signal $\bsx$ is a vector and the spiked Wigner matrix is of the form
\beq \label{eq:rank-1}
\sqrt{\lambda} \bsx \bsx^T + H.
\eeq
The spectral properties of rank-$1$ spiked Wigner matrix have been extensively studied in random matrix theory (\cite{Peche2006,FeralPeche2007,CapitaineDonatiFeral2009,BenaychNadakuditi2011}), and the detection limits have been investigated in statistical learning theory (\cite{onatski2013asymptotic,onatski2014signal,JohnstoneOnatski2015,montanari2015limitation,Barbier2016,LelargeMiolane2016,miolane2017fundamental,BanerjeeMa2017,Perry2018,AlaouiJordan2018,Chung-Lee2019}). The model is also applied to various problems such as community detection (\cite{Abbe2017}) and submatrix localization (\cite{Butucea2013}).

In the rank-$1$ spiked Wigner matrix with Gaussian noise, assuming the signal is drawn from a distribution, called the prior, the signal is not reliably detectable if the SNR $\lambda$ is below a certain threshold (\cite{montanari2015limitation}). With the normalization $\|\bsx\|_2 = 1$ and $\| H \| \to 2$ as $N \to \infty$, the threshold is $1$ for a general class of priors, including spherical, Rademacher, and any i.i.d. prior with a sub-Gaussian bound (\cite{Perry2018}). On the other hand, the signal can be estimated if the SNR is above the threshold (\cite{Barbier2016,LelargeMiolane2016,miolane2017fundamental}).

In the subcritical case where the signal is not reliably detectable, it is natural to consider a hypothesis test on the presence of the signal.
As asserted by Neyman--Pearson lemma, the likelihood ratio (LR) test is optimal in the sense that it minimizes the sum of the Type-I error and the Type-II error. It was proved in \cite{AlaouiJordan2018} that this sum converges to
\beq
\erfc \left( \frac{1}{4} \sqrt{-\log(1-\lambda)} \right)
\eeq
when the variance of $H_{ii}$ is $2$ and hence $H$ is a Gaussian Orthogonal Ensemble (GOE). Though optimal, the LR test is not efficient, and it is desirable to construct a test that does not depend on information about the distribution of the signal, called prior, which is typically not known in many practical applications. In \cite{Chung-Lee2019}, an optimal and universal test was proposed, which is based on the linear spectral statistics (LSS) of the data matrix, a linear functional defined as
\beq\label{eq:LSS}
L_N(f)=\sum_{i=1}^{N}f(\mu_i)
\eeq
for a given function $f$, where $\mu_1,\cdots\mu_N$ are the eigenvalues of the data matrix. For other results on the rank-$1$ spiked Wigner model, we refer to \cite{AlaouiJordan2018,Perry2018,Chung-Lee2019} and references therein.

\textbf{Main problem:} We consider the detection problem in the general spiked Wigner model in \eqref{eq:spiked_Wigner}. Let us denote by $\bsH_1$ and $\bsH_2$ the hypotheses
\beq \label{eq:hyp}
\bsH_1 : k=k_1, \qquad \bsH_2 : k=k_2
\eeq
for distinct non-negative integers $k_1<k_2$. While it may seem obvious, it has not been even known in the simple case $k_1=0$ whether the detection becomes easier as $k_2$ increases. The principal component analysis (PCA), which is one of the most commonly used techniques to analyze the matrix model, can detect the presence of the signal if and only if $\lambda>1$, regardless of $k_2$ (\cite{BenaychNadakuditi2011}). Our goal is to construct an efficient algorithm for a hypothesis test between $\bsH_1$ and $\bsH_2$ that is universal, optimal, and data-driven as in \cite{Chung-Lee2019}. 


\textbf{Main contributions:} 
We propose a test based on the central limit theorem (CLT) of the LSS analogous to the one introduced in \cite{Chung-Lee2019}. The test is universal, and the various quantities in it can be estimated from the observed data without any prior knowledge on the signal or the noise. Furthermore, we also show that the proposed test can be improved by adapting the entrywise transformation in \cite{Perry2018}.

To prove the optimality of the proposed test, we compare the error of the proposed test with that of the LR test. Adapting the strategies of \cite{LelargeMiolane2016,AlaouiJordan2018}, we study the LR of the spiked Wigner matrices for small $\lambda$ and show that the log-LR converges to a Gaussian whose mean and variance depend on $k_1$ and $k_2$ (see Theorems \ref{thm:main} and \ref{thm:LR_err}).

An important issue when applying the spiked Wigner matrix is that the rank of the spike must be known a priori. Viable solutions to resolve the issue in the context of the community detection were suggested in \cite{bickel2016hypothesis,lei2016goodness}, which can work for any spiked Wigner matrices whenever $\lambda \gg 1$. However, their methods, which are spectral in nature, are not applicable in the regime $\lambda < 1$ regardless of the rank of the spike. With the CLT of the LSS, we also introduce a test for rank estimation that does not require the prior information on the rank of the signal.

The main mathematical achievement of the current paper is the CLT for the LSS of spiked Wigner matrices with general ranks. 
For a rank-$1$ spiked Wigner matrix, the CLT was first proved for a special spike $\frac{1}{\sqrt{N}}(1, 1, \dots, 1)^T$ in \cite{Baik-Lee2017} and later extended for a general rank-$1$ spike by comparison with the special case (\cite{Chung-Lee2019}). However, the proof in \cite{Baik-Lee2017} is not readily extended to the spiked Wigner matrices with higher ranks. In this paper, we overcome the difficulty by introducing a direct interpolation between the spiked Wigner matrix and the corresponding Wigner matrix without a spike and tracking the change of the LSS.

\subsection{Model}

The data matrix we consider is a rank-$k$ spiked Wigner matrix, which is defined as follows:

\begin{defn}[Wigner matrix] \label{def:Wigner}
	An $N \times N$ symmetric random matrix $H = (H_{ij})$ is a (real) Wigner matrix if $H_{ij}$ ($i, j = 1, 2, \dots, N$) are independent real random variables such that
	\begin{itemize}
		\item All moments of $H_{ij}$ are finite and $\E[H_{ij}]=0$ for all $i \leq j$.
		\item For all $i<j$, $N \E[H_{ij}^2]=1$, $N^{\frac{3}{2}} \E[H_{ij}^3]=w_3$, and $N^2 \E[H_{ij}^4]=w_4$ for some $w_3, w_4\in \R$. 
		\item For all $i$, $N\E[H_{ii}^2]=w_2$ for some constant $w_2\geq 0$. 
	\end{itemize}
\end{defn}

\begin{defn}[(rank-$k$) Spiked Wigner matrix] \label{def:spiked_Wigner}
	An $N \times N$ matrix $	M = \sqrt{\lambda} XX^T + H$ is a spiked Wigner matrix with a spike $X$ and the SNR $\lambda$ if $H$ is a Wigner matrix and a $N\times k$ signal matrix $X = [\bsx(1), \bsx(2), \dots, \bsx(k)]$ with $\bsx(i) \in \R^N$ and $\|\bsx(i)\|_2=1$ for $i=1, 2, \dots, k$.
\end{defn}

For the analysis of the data matrix with Gaussian noise, 
we use a spiked Wigner matrix with the following normalization.

\begin{defn}[(rank-$k$) Spiked Gaussian Wigner matrix] \label{def:Gaussian}
	An $N\times N$ matrix $Y=\sqrt{\frac{\lambda}{N}}X^{\ast}X^{\ast T}+W$ is a spiked Gaussian Wigner matrix with the SNR $\lambda$ with the spike $X^{\ast}$ and SNR $\lambda$ if $W=\sqrt{N}H$ for a Wigner matrix $H$ and $X^{\ast}=\left[\bsx^{\ast}(1), \bsx^{\ast}(2), \cdots, \bsx^{\ast}(k)\right]\in\R^{N\times k}$. We assume that the columns of the spike matrix $x^{\ast}_i(\ell)$ are i.i.d. with a prior distribution $\caP$ having bounded support.
\end{defn}

\subsection{Other related works}

The spiked Wigner model can be generalized to $p$-tensor models ($p \geq 3$). With the rank-$1$ spherical spike, the phase transition was proved in \cite{montanari2015limitation,richard2014statistical} that there exist $\lambda_{-}\le\lambda_{+}$ such that detection is impossible for $\lambda<\lambda_{-}$ but is possible for $\lambda>\lambda_{+}$. The tensor models with multiple spikes were considered in \cite{lesieur2017statistical,barbier2017stochastic,Chen2018} where i.i.d. signals are sampled from a joint of centered priors with finite variance, and it was further generalized to the non-symmetric setting in \cite{barbier2017layered}. 

\subsection{Organization of the paper}

The rest of the paper is organized as follows:
In Section~\ref{sec:main}, we propose algorithms for LSS-based tests and a test for rank estimation, and analyze their performance. In Section~\ref{sec:Gaussian}, we prove the Gaussian convergence of the log-LR of the spiked Gaussian Wigner model, which asserts the optimality of the proposed test. In Section~\ref{sec:LSS}, we state general results on the CLT for the LSS. We conclude the paper in Section~\ref{sec:summary} with the summary of our works and future research directions. In Appendix~\ref{sec:ex}, we consider examples of spiked Wigner matrices and provide results from numerical experiments.
In Appendices \ref{app:thm} and \ref{app:CLT}, we provide technical details of the proofs.

\section{Main results} \label{sec:main}




\subsection{Hypothesis testing based on LSS for spiked Wigner Matrices} \label{sec:non-Gaussian}

Recall the hypotheses defined in \eqref{eq:hyp}. 
In \cite{Chung-Lee2019}, the following test statistic was considered for the case $k_1=0$, $k_2=1$:
\beq \begin{split} \label{eq:L_lambda}
	L_{\lambda} &= - \log \det \left( (1+\lambda) I - \sqrt{\lambda} M \right) +  \frac{\lambda N}{2} \\
	&\qquad + \sqrt{\lambda} \left( \frac{2}{w_2} - 1 \right) \Tr M 
	+ \lambda \left( \frac{1}{w_4-1} - \frac{1}{2} \right) (\Tr M^2 - N).
\end{split} 
\eeq
If there is no signal present, 
$L_{\lambda} \Rightarrow \caN(m_0, V_0)$,
where
\beq \begin{split} \label{eq:m_0}
	m_0 = -\frac{1}{2} \log(1-\lambda) + \left(\frac{w_2 -1}{w_4-1} -\frac{1}{2} \right) \lambda + \frac{(w_4 -3) \lambda^2}{4},
\end{split} \eeq
\beq \begin{split} \label{eq:V_H}
	V_0 = -2 \log(1-\lambda) + \left( \frac{4}{w_2}-2 \right) \lambda + \left( \frac{2}{w_4-1} - 1 \right) \lambda^2.
\end{split} \eeq

For a rank-$k$ spiked Wigner matrix, $L_{\lambda}$ also converges to a Gaussian with the same variance $V_0$ but an altered mean $m_k$. The following is the precise statement for the limiting distribution of $L_{\lambda}$.

\begin{thm} \label{thm:main-weak}
	Let $M$ be a rank-$k$ spiked Wigner matrix with a spike $X$ as in Definition \ref{def:spiked_Wigner} with $0 < \lambda < 1$. Denote by $\mu_1 \geq \mu_2 \geq \dots \geq \mu_N$ the eigenvalues of $M$. Then, 
	\beq
	L_{\lambda} \Rightarrow \caN(m_k, V_0)\,,
	\eeq
	where the variance $V_0$ is as in \eqref{eq:V_H} and the mean $m_k$ is given by
	\beq \begin{split} \label{eq:m_k}
		m_k = m_0 + k \left[ -\log(1-\lambda) + \left( \frac{2}{w_2} - 1 \right) \lambda + \left( \frac{1}{w_4-1} - \frac{1}{2} \right) \lambda^2 \right].
	\end{split} 
	\eeq
\end{thm}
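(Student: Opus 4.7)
The plan is to reduce Theorem \ref{thm:main-weak} to the $k=0$ case established in \cite{Chung-Lee2019} by isolating the spike contribution in each of the three pieces of $L_\lambda$ and showing that this contribution is a deterministic constant up to $o_{\mathbb{P}}(1)$ fluctuations. If successful, the null CLT for $L_\lambda(H)$ transfers to $L_\lambda(M)$ via Slutsky's theorem with the mean shifted by $m_k-m_0$ and the variance unchanged.

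First, I would write $L_\lambda(M) = L_\lambda(H) + \Delta$ and analyze $\Delta$ component by component. For the log-determinant, applying Sylvester's identity at $z = (1+\lambda)/\sqrt{\lambda}$ (which lies outside $[-2,2]$ since $\lambda<1$) gives
\beq
-\log\det\big((1+\lambda)I - \sqrt{\lambda}M\big) + \log\det\big((1+\lambda)I - \sqrt{\lambda}H\big) = -\log\det\big(I_k - \sqrt{\lambda}\, X^T (zI-H)^{-1} X\big).
\eeq
For the trace terms, direct expansion of $M = H + \sqrt{\lambda}XX^T$ yields $\Tr M - \Tr H = \sqrt{\lambda}\,\Tr(X^T X)$ and $\Tr M^2 - \Tr H^2 = 2\sqrt{\lambda}\,\Tr(X^T H X) + \lambda\,\Tr((X^T X)^2)$.

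Second, I would identify the limit in probability of each term. The $k\times k$ matrix $X^T(zI-H)^{-1}X$ has entries $\bsx(i)^T(zI-H)^{-1}\bsx(j)$, which by the isotropic local semicircle law concentrate at $-m_{\mathrm{sc}}(z)\,\delta_{ij}$ (the off-diagonal vanishing relies on the generic near-orthogonality $\bsx(i)^T\bsx(j) = o(1)$ for $i\neq j$). A short algebraic check gives $-m_{\mathrm{sc}}(z) = (z-\sqrt{z^2-4})/2 = \sqrt{\lambda}$, so $\sqrt{\lambda}\,X^T(zI-H)^{-1}X \to \lambda I_k$ in probability and the log-determinant contribution tends to $-k\log(1-\lambda)$. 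The cross term $\Tr(X^T H X)$ is a sum of $k$ quadratic forms of mean zero and variance $O(1/N)$, hence $o_{\mathbb{P}}(1)$. Combining with $\Tr(X^TX) = k$ and $\Tr((X^TX)^2) = k + o_{\mathbb{P}}(1)$, the remaining trace contributions evaluate at leading order to $k\lambda(2/w_2-1)$ and $k\lambda^2(1/(w_4-1)-1/2)$. Summing yields exactly
\beq
\Delta = k\left[-\log(1-\lambda) + \Big(\tfrac{2}{w_2}-1\Big)\lambda + \Big(\tfrac{1}{w_4-1}-\tfrac{1}{2}\Big)\lambda^2\right] + o_{\mathbb{P}}(1),
\eeq
which is $(m_k - m_0) + o_{\mathbb{P}}(1)$, and Slutsky's theorem combined with the null CLT $L_\lambda(H) \Rightarrow \caN(m_0, V_0)$ closes the argument.

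The principal obstacle is establishing, with the required precision and without any distributional assumption on $X$, that all spike-dependent contributions in $\Delta$ above the deterministic leading order are genuinely $o_{\mathbb{P}}(1)$, so that the variance $V_0$ is preserved. This requires sharp isotropic control on bilinear forms of the Wigner resolvent at a fixed distance from the spectrum, together with control on the cross-term $\bsx(i)^T H \bsx(j)$ uniformly in the direction of the spike. As foreshadowed in the introduction, the cleanest route is to turn on the spike continuously via $M(t) = \sqrt{t\lambda}\,XX^T + H$ for $t\in[0,1]$, differentiate $L_\lambda(M(t))$ through the resolvent identity, and invoke the general CLT for LSS of spiked Wigner matrices stated in Section \ref{sec:LSS}. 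This interpolation ties the computation of the mean shift and the invariance of the variance into a single deformation argument, and allows the cross-correlations among the columns of $X$ to be absorbed naturally rather than handled by an explicit orthonormality hypothesis.
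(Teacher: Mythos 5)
Your proposal is correct, and its primary route is genuinely different from the paper's. The paper proves Theorem \ref{thm:main-weak} in one line by specializing the general LSS CLT (Theorem \ref{thm:CLT}) to $f=\varphi_{\lambda}$; that general theorem is in turn proved by interpolating $M(\theta)=\theta\sqrt{\lambda}XX^T+H$, writing the LSS as a Cauchy contour integral of $\Tr R(\theta,z)$, and using the isotropic local law to show $\bsx(m)^T R(\theta,z)\bsx(m)=s(z)/(1+\theta\sqrt{\lambda}s(z))+O(N^{-1/2})$, so that only the mean moves along the interpolation. Your main argument instead exploits the explicit form of the statistic: Sylvester's identity reduces the log-determinant shift to a $k\times k$ determinant of resolvent bilinear forms evaluated at the fixed point $z=(1+\lambda)/\sqrt{\lambda}$ outside the spectrum, and the two trace terms are expanded by hand. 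The inputs are the same (the null CLT for $L_{\lambda}(H)$ and the isotropic local law), and your evaluations are right: with the paper's $s(z)=\tfrac{-z+\sqrt{z^2-4}}{2}$ one has $-s(z)=\sqrt{\lambda}$ at this $z$, giving the $-k\log(1-\lambda)$ shift, while $\Tr(X^TX)=k$, $\Tr(X^THX)=o_{\mathbb{P}}(1)$, and $\Tr((X^TX)^2)\to k$ give the $k\lambda(2/w_2-1)$ and $k\lambda^2(1/(w_4-1)-1/2)$ shifts, so Slutsky closes the argument. What the paper's route buys is the CLT for arbitrary analytic $f$, which is needed elsewhere (Theorem \ref{thm:optimize}); what yours buys is a shorter, self-contained proof for this particular statistic. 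One caveat you rightly flag: Definition \ref{def:spiked_Wigner} only normalizes the columns of $X$, yet both your step $\Tr((X^TX)^2)=k+o(1)$ and the concentration $X^T(zI-H)^{-1}X\to\sqrt{\lambda}I_k$ require the off-diagonal overlaps $\bsx(i)^T\bsx(j)$ to vanish asymptotically; the paper's own proof of Theorem \ref{thm:CLT} silently assumes the columns are orthonormal, so this is an implicit hypothesis shared by both arguments rather than a gap specific to yours. Your closing suggestion to fall back on the interpolation coincides with the paper's actual proof.
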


\begin{proof}
	Theorem \ref{thm:main-weak} directly follows from Theorem \ref{thm:CLT} in Section \ref{sec:LSS}.
\end{proof}

We can construct a hypothesis test (between $\bsH_1$ and $\bsH_2$) based on Theorem \ref{thm:main}, which we describe in Algorithm \ref{alg:ht}. In this test, for a given data matrix $M$, we compute $L_{\lambda}$ and compare it with the critical value $m_{\lambda}$, defined as
\beq \begin{split} \label{eq:m_lambda}
	m_{\lambda} := \frac{m_{k_1}+m_{k_2}}{2} 
	&= -\frac{k_1 + k_2 + 1}{2} \log(1-\lambda) + \left(\frac{w_2 -1}{w_4-1} + \frac{k_1 + k_2}{w_2} - \frac{k_1 + k_2 + 1}{2} \right) \lambda \\
	&\qquad  + \left( \frac{w_4 - k_1 - k_2 - 3}{4} + \frac{k_1 + k_2}{2(w_4-1)} \right) \lambda^2.
\end{split} \eeq
In Theorem \ref{thm:optimize}, we prove that the error of the CLT-based test is minimized with the test statistic $L_{\lambda}$ also for rank-$k$ spiked Wigner matrices.

\begin{algorithm}[tb]
	\caption{Hypothesis test}
	\label{alg:ht}
	
	\begin{algorithmic}
		\STATE \textbf{Data}: 	$M_{ij}$, parameters $w_2, w_4$, $\lambda$
		\STATE $L_{\lambda} \gets$ test statistic in \eqref{eq:L_lambda}, \hskip5pt $m_{\lambda} \gets$ critical value in \eqref{eq:m_lambda}
		
		\IF{$L_{\lambda} \leq m_{\lambda}$} \STATE{ {\textbf{Accept}} $\bsH_1$ } 
		\ELSE \STATE{ \textbf{Accept} $\bsH_2$ }
		\ENDIF
		
	\end{algorithmic}
\end{algorithm}

\begin{thm} \label{thm:test}
	The error of the test in algorithm \ref{alg:ht} converges to
	\[
	\erfc \left( \frac{k_2 - k_1}{4} \sqrt{-\log (1-\lambda) + \left( \frac{2}{w_2} - 1 \right) \lambda + \left( \frac{1}{w_4-1} - \frac{1}{2} \right) \lambda^2} \right).
	\]
\end{thm}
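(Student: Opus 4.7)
The plan is to read off the error of the test directly from the asymptotic distributions provided by Theorem \ref{thm:main-weak} and reduce everything to a single Gaussian tail probability.

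First I would identify, using Theorem \ref{thm:main-weak}, that under hypothesis $\bsH_i$ (that is, when the rank equals $k_i$) the test statistic $L_\lambda$ converges in distribution to $\caN(m_{k_i}, V_0)$ for $i=1,2$, where $m_{k_i}$ is given by \eqref{eq:m_k} and $V_0$ by \eqref{eq:V_H}. Since the critical value is chosen as the midpoint $m_\lambda = (m_{k_1}+m_{k_2})/2$ in \eqref{eq:m_lambda}, the total error of Algorithm \ref{alg:ht} is
\[
	\mathrm{err} = \P_{\bsH_1}(L_\lambda > m_\lambda) + \P_{\bsH_2}(L_\lambda \le m_\lambda),
\]
and by the CLT these two probabilities both converge to $\P(Z > (m_{k_2}-m_{k_1})/(2\sqrt{V_0}))$ for a standard normal $Z$. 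Using $\P(Z>a) = \tfrac12 \erfc(a/\sqrt 2)$, this gives
\[
	\mathrm{err} \to \erfc\!\left( \frac{m_{k_2}-m_{k_1}}{2\sqrt{2V_0}} \right).
\]

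Next I would carry out the algebraic simplification of $(m_{k_2}-m_{k_1})/(2\sqrt{2V_0})$. Writing
\[
	A \deq -\log(1-\lambda) + \left(\frac{2}{w_2}-1\right)\lambda + \left(\frac{1}{w_4-1}-\frac{1}{2}\right)\lambda^2,
\]
the formula \eqref{eq:m_k} gives $m_{k_2}-m_{k_1} = (k_2-k_1)\,A$, while \eqref{eq:V_H} shows $V_0 = 2A$. Substituting,
\[
	\frac{m_{k_2}-m_{k_1}}{2\sqrt{2V_0}} = \frac{(k_2-k_1)A}{2\sqrt{4A}} = \frac{k_2-k_1}{4}\sqrt{A},
\]
which is exactly the argument of $\erfc$ in the statement of Theorem \ref{thm:test}.

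Essentially there is no obstacle beyond invoking Theorem \ref{thm:main-weak}: the only subtlety is justifying that the convergence of $L_\lambda$ in distribution under each hypothesis yields convergence of the Type-I and Type-II probabilities to the corresponding Gaussian tail probabilities. This follows because the limiting distribution is continuous, so the distribution function converges pointwise at the fixed threshold $m_\lambda$ (which is deterministic and independent of $N$). Once this is noted, the remainder is the one-line computation above, and the theorem follows.
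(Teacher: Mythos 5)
Your proposal is correct and is exactly the argument the paper has in mind: the paper's proof of Theorem \ref{thm:test} simply declares it a direct consequence of Theorem \ref{thm:main-weak} and points to the analogous computations in \cite{AlaouiJordan2018} and \cite{Chung-Lee2019}, which are precisely the Gaussian tail calculation you carry out. Your identification $m_{k_2}-m_{k_1}=(k_2-k_1)A$ and $V_0=2A$, and the resulting simplification to $\erfc\bigl(\tfrac{k_2-k_1}{4}\sqrt{A}\bigr)$, check out.
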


\begin{proof}
	Theorem \ref{thm:test} is a direct consequence of Theorem \ref{thm:main-weak}. (Similar calculation was done in Section 3 of \cite{AlaouiJordan2018} and the proof of Theorem 2 of \cite{Chung-Lee2019}.)
\end{proof}

\begin{rem}
	In case $w_4 =3$, we obtain
	\beq \label{eq:gaussian_error}
	\lim_{N \to \infty} \err(\lambda) = \erfc \left( \frac{k_2-k_1}{4} \sqrt{-\log (1-\lambda) + \left( \frac{2}{w_2} - 1 \right) \lambda} \right).
	\eeq	
	In Theorem \ref{thm:LR_err}, we prove the error of the optimal LR test coincides with the limiting error in \eqref{eq:gaussian_error}. The limiting error in Theorem \ref{thm:LR_err} is maximal. Thus, considering that we have no information on the prior, our test is optimal when the noise is Gaussian and SNR $\lambda$ is small.
\end{rem}

\subsection{LSS-based test with entrywise transformation} \label{sec:entrywise}

For a rank-$1$ spiked matrix, if we apply a function on each entry, then the transformed matrix is approximately a rank-$1$ spiked matrix with different SNR. The function can be optimized so that the effective SNR of the transformed matrix is maximized, and it was shown that the PCA (\cite{Perry2018}) and an LSS-based test (\cite{Chung-Lee2019}) is improved with such a transformation. In this subsection, we show that the test in algorithm \ref{alg:ht} can also be improved by applying the same entrywise transformation to the data matrix as in \cite{Perry2018,Chung-Lee2019}.

We use the following technical assumptions.

\begin{assump} \label{assump:entry}
	For the spike $\bsx$, we assume that $\| \bsx \|_{\infty} \leq N^{-c}$ for some $c > \frac{3}{8}$. 
	
	For the noise, let $\caP$ and $\caP_d$ be the distributions of the normalized off-diagonal entries $\sqrt{N} H_{ij}$ and the normalized diagonal entries $\sqrt{N} H_{ii}$, respectively. We assume the following:
	\begin{enumerate}
		\item The density function $g$ of $\caP$ is smooth, positive everywhere, and symmetric (about 0).
		\item The function $h = -g'/g$ and its all derivatives are polynomially bounded in the sense that $|h^{(\ell)}(w)| \leq C_{\ell} |w|^{C_{\ell}}$ for some constant $C_{\ell}$ depending only on $\ell$.
		\item The density function $g_d$ of $\caP_d$ satisfies the assumptions 1 and 2.
	\end{enumerate}  
\end{assump}


Set $h = -g'/g$ and $h_d = -g_d'/g_d$. For a rank-$k$ spiked Wigner matrix $M$ that satisfies Assumption \ref{assump:entry}, we define a matrix $\tM$ by
\beq \label{eq:tM1}
\tM_{ij} = \frac{1}{\sqrt{\fh N}} h(\sqrt{N} M_{ij}) \quad (i \neq j), \qquad 
\tM_{ii} = \sqrt{\frac{w_2}{\fh_d N}} h_d \left(\sqrt{\frac{N}{w_2}} M_{ii} \right),
\eeq
where
\[
\fh = \int_{-\infty}^{\infty} \frac{g'(w)^2}{g(w)} \dd w, \qquad \fh_d = \int_{-\infty}^{\infty} \frac{g_d'(w)^2}{g_d(w)} \dd w.
\]
The transformation has an effect of changing the SNR from $\lambda$ to $\lambda \fh$, which is an improvement when the noise is non-Gaussian since $\fh \geq 1$ and the equality holds if and only if $\caP$ is a standard Gaussian. For more detail, we refer to \cite{Perry2018,Chung-Lee2019}.

Following \cite{Chung-Lee2019}, we consider a test statistic
\beq \begin{split} \label{eq:wt L_lambda}
	\wt L_{\lambda} &:= - \log \det \left( (1+\lambda\fh)I - \sqrt{\lambda\fh} \tM \right) + \frac{\lambda\fh}{2} N \\
	&\qquad + \sqrt{\lambda} \left( \frac{2\sqrt{\fh_d}}{w_2} - \sqrt{\fh} \right) \Tr \tM 
	+ \lambda \left( \frac{\gh}{\wt{w_4}-1} - \frac{\fh}{2} \right) (\Tr \tM^2 - N),
\end{split} 
\eeq
where 
\[
\gh = \frac{1}{2\fh} \int_{-\infty}^{\infty} \frac{g'(w)^2 g''(w)}{g(w)^2} \dd w,
\quad \wt{w_4} = \frac{1}{(\fh)^2} \int_{-\infty}^{\infty} \frac{(g'(w))^4}{(g(w))^3} \dd w.
\]
We then have the following CLT result for $\wt L_{\lambda}$ that generalizes Theorem 3 of \cite{Chung-Lee2019}.

\begin{thm} \label{thm:trans_main}
	For a rank-$k$ spiked Wigner matrix $M$ with $\lambda \fh < 1$ satisfying Assumption \ref{assump:entry}.
	\beq
	\wt L_{\lambda} \Rightarrow \caN(\wt m_k, \wt V_0),
	\eeq
	where the mean and the variance are given by
	\beq \begin{split}
		\wt m_k &= - \frac{1}{2} \log(1-\lambda\fh) + \left( \frac{(w_2 -1)\gh}{\wt w_4 -1} - \frac{\fh}{2} \right) \lambda + \frac{\wt w_4 -3}{4} (\lambda\fh)^2 \\
		&\qquad + k \left[ -\log(1- \lambda\fh) + \left( \frac{2\fh_d}{w_2} - \fh \right) \lambda + \left( \frac{(\gh)^2}{\wt w_4-1} - \frac{(\fh)^2}{2}\right) \lambda^2 \right],
	\end{split} \eeq
	\beq \begin{split}
		\wt V_0 = -2 \log(1- \lambda\fh) + \left( \frac{4\fh_d}{w_2} - 2\fh \right) \lambda + \left( \frac{2(\gh)^2}{\wt w_4-1} - (\fh)^2 \right) \lambda^2.
	\end{split} \eeq
\end{thm}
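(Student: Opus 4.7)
The plan is to reduce Theorem \ref{thm:trans_main} to Theorem \ref{thm:main-weak} by showing that the entrywise-transformed matrix $\tM$ behaves, up to spectrally negligible corrections, as a rank-$k$ spiked Wigner matrix with amplified SNR $\lambda\fh$ and effective noise parameters $(w_2,\wt w_4)$, and that the specific coefficients appearing in $\wt L_\lambda$ (in particular, the occurrences of $\sqrt{\fh_d}$ and $\gh$) are designed precisely to absorb the mismatches produced by (i) treating the diagonal separately via $h_d$ and (ii) the subleading Taylor terms in the entrywise expansion.

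First I would carry out an entrywise Taylor expansion. Using $\|\bsx(\ell)\|_\infty \leq N^{-c}$ with $c > 3/8$ and the polynomial bounds on derivatives of $h$, $h_d$ in Assumption \ref{assump:entry},
\[
h\bigl(\sqrt{N} M_{ij}\bigr) = h\bigl(\sqrt{N} H_{ij}\bigr) + \sqrt{\lambda N}\,(XX^T)_{ij}\, h'\bigl(\sqrt{N} H_{ij}\bigr) + \tfrac{1}{2}\lambda N\,(XX^T)_{ij}^2\, h''\bigl(\sqrt{N} H_{ij}\bigr) + R_{ij},
\]
and analogously on the diagonal via $h_d$. The integration-by-parts identity $\E[h'(Z)]=\fh$ for $Z\sim g$ then identifies $\sqrt{\lambda\fh}\,(XX^T)_{ij}$ as the leading off-diagonal signal in $\tM_{ij}$, and $\sqrt{\lambda\fh_d}\,(XX^T)_{ii}$ as the leading diagonal signal. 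Collecting the leading-order noise yields a Wigner-type matrix with $N\E[\tilde H_{ij}^2]=1$, $N\E[\tilde H_{ii}^2]=w_2$, $N^{3/2}\E[\tilde H_{ij}^3]=0$ (from symmetry of $g$), and $N^2\E[\tilde H_{ij}^4]=\wt w_4$. Thus at leading order $\tM$ is a rank-$k$ spiked Wigner matrix in the sense of Definition \ref{def:spiked_Wigner} with SNR $\lambda\fh$ and parameters $(w_2,\wt w_4)$, plus (a) an additional rank-$k$ diagonal signal of size $(\sqrt{\lambda\fh_d}-\sqrt{\lambda\fh})\,\diag(XX^T)$ and (b) operator-norm-negligible remainders.

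Next I would compare $\wt L_\lambda$ with the statistic $L_{\lambda\fh}$ from \eqref{eq:L_lambda} applied to this effective matrix with parameters $(w_2,\wt w_4)$. The log-determinant and the $\lambda\fh N/2$ pieces match by construction. The coefficient $\sqrt{\lambda}\,(2\sqrt{\fh_d}/w_2-\sqrt{\fh})$ in front of $\Tr\tM$ differs from the naive $\sqrt{\lambda\fh}(2/w_2-1)$ by $(2\sqrt{\lambda}/w_2)(\sqrt{\fh_d}-\sqrt{\fh})$, which is exactly the correction required to neutralize the mean shift from the mismatched diagonal signal identified above. The coefficient $\lambda(\gh/(\wt w_4-1)-\fh/2)$ in front of $\Tr\tM^2-N$ features $\gh$ rather than $\fh$ because the cross term $2\sqrt{\lambda N}(XX^T)_{ij}h(\sqrt{N}H_{ij})h'(\sqrt{N}H_{ij})$ together with the quadratic contribution $(h')^2+hh''$ in the expansion of $\tM_{ij}^2$ produces a signal mean in $\E[\Tr\tM^2]$ of size $\lambda\,\gh\,\Tr((XX^T)^2)$ (using $\E[h^2 h']=\E[h^4]/3$ and hence $\E[(h')^2+hh'']=\E[h^4]/3=\fh\,\gh$ via two integrations by parts), in contrast to the naive $\lambda\fh\,\Tr((XX^T)^2)$ expected of a clean spiked Wigner. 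Once these identifications are made, applying Theorem \ref{thm:main-weak} to the effective matrix and substituting the replacements $\lambda\to\lambda\fh$, $w_4\to\wt w_4$ (with $w_2$ unchanged) into \eqref{eq:m_k} and \eqref{eq:V_H}, together with the $\fh_d$- and $\gh$-dependent adjustments from the coefficient corrections, reproduces the stated $\wt m_k$ and $\wt V_0$.

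The main technical obstacle will be controlling the Taylor remainders $R_{ij}$ and their diagonal counterparts uniformly enough that their cumulative effect on the operator norm of $\tM$ and on the trace functionals $\Tr\tM$, $\Tr\tM^2$ is $o(1)$; this is precisely the step that forces the condition $c>3/8$ and the polynomial growth hypotheses on $h^{(\ell)}$, $h_d^{(\ell)}$ in Assumption \ref{assump:entry}. Apart from this remainder control, the bookkeeping is the rank-$k$ analogue of the rank-one computation already carried out in \cite{Chung-Lee2019,Perry2018}, and the distributional conclusion follows directly from Theorem \ref{thm:main-weak} applied to the effective spiked Wigner structure.
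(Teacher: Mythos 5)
Your coefficient bookkeeping is largely sound --- the identities $\E[h^2h']=\E[h^4]/3$ and $\E[(h')^2+hh'']=\fh\gh$ are correct, and you have correctly located where $\sqrt{\fh_d}$ and $\gh$ enter --- but the core reduction has a genuine gap. The paper does \emph{not} deduce Theorem \ref{thm:trans_main} from Theorem \ref{thm:main-weak}; it deduces it from a separate CLT (Theorem \ref{thm:trans_CLT}) proved in Appendix \ref{app:CLT}, and that detour is unavoidable. After the entrywise transformation, the centered noise part of $\tM$ is not a Wigner matrix in the sense of Definition \ref{def:Wigner}: its entries have a signal-dependent variance profile, $\E[\tM_{ij}^2]-(\E[\tM_{ij}])^2=\tfrac1N+\lambda(\gh-\fh)(XX^T)_{ij}^2+\dots$, so $\tM$ is a spiked \emph{general Wigner-type} matrix. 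This deviation from homogeneity is not an ``operator-norm-negligible remainder'': since $\sum_{ij}(XX^T)_{ij}^2=\Tr((XX^T)^2)=O(1)\cdot k$, it shifts the mean of the LSS of a general test function $f$ by the $O(1)$ quantity $k\lambda\gh\,\tau_2(f)$ (visible as the $k\lambda\gh$ inside $(w_2-2+k\lambda\gh)\tau_2(f)$ in \eqref{eq:mean_tM}), i.e.\ at exactly the same order as the Gaussian fluctuations you are trying to identify. Theorem \ref{thm:main-weak} simply does not apply to such a matrix, so the step ``apply Theorem \ref{thm:main-weak} to the effective matrix'' is not available.

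Your computation of $\E[\Tr\tM^2]$ captures the effect of this perturbation on the \emph{specific} quadratic term of $\wt L_\lambda$, but the statistic $\wt L_\lambda$ also contains the log-determinant, whose response to the variance-profile perturbation and to the random cross term $2\sqrt{\lambda N}(XX^T)_{ij}h'(\sqrt N H_{ij})$ cannot be read off from first and second moments of traces; one needs the full resolvent analysis. This is precisely what the paper supplies: it renormalizes $\tM$ to a homogeneous Wigner matrix $W$, interpolates $W(\theta)=(1-\theta)W+\theta(\tM-\E\tM)$ through general Wigner-type matrices, invokes the anisotropic local law and the quadratic vector equation to show $\Tr R^W(1,z)-\Tr R^W(0,z)=k\lambda(\gh-\fh)s'(z)s(z)+o(1)$, and only then adds back the mean $\sqrt{\lambda\fh}XX^T+\diag(d_i)$. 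To repair your argument you would have to prove this Wigner-type CLT yourself, which is the actual content of Theorem \ref{thm:trans_CLT}; citing \cite{Chung-Lee2019,Perry2018} for ``the rank-one bookkeeping'' does not cover it, since even there the analogous step is a standalone theorem rather than a corollary of the untransformed CLT.
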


\begin{proof}
	Theorem \ref{thm:trans_main} directly follows from Theorem \ref{thm:trans_CLT} in Section \ref{sec:LSS}.
\end{proof}

Analogous to Algorithm \ref{alg:ht} and also Algorithm 2 of \cite{Chung-Lee2019}, we propose a test described in Algorithm \ref{alg:htet} where we compute $\wt L_{\Lambda}$ and compare it with the critical value
\beq \begin{split} \label{eq:wt m_lambda}
	\wt m_{\lambda} := (\wt m_{k_1} + \wt m_{k_2})/2.
\end{split} \eeq

\begin{thm} \label{thm:trans_test}
	The error of the test in Algorithm \ref{alg:htet} converges to
	\[
	\erfc \left( \frac{k_2-k_1}{4} \sqrt{-\log (1-\lambda\fh) + \left( \frac{2\fh_d}{w_2} - \fh \right) \lambda + \left( \frac{(\gh)^2}{\wt w_4-1} - \frac{(\fh)^2}{2} \right) \lambda^2} \right).	
	\]
\end{thm}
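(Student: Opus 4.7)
The plan is to reduce everything to Theorem \ref{thm:trans_main}, exactly paralleling the derivation of Theorem \ref{thm:test} from Theorem \ref{thm:main-weak}. Under hypothesis $\bsH_i$ (i.e., $k=k_i$), Theorem \ref{thm:trans_main} gives $\wt L_\lambda \Rightarrow \caN(\wt m_{k_i}, \wt V_0)$ for $i=1,2$, with the same limiting variance in both cases. Since the critical value in \eqref{eq:wt m_lambda} is chosen as the midpoint $\wt m_\lambda = (\wt m_{k_1} + \wt m_{k_2})/2$, by symmetry of the Gaussian both the Type-I error $\p(\wt L_\lambda > \wt m_\lambda \mid \bsH_1)$ and the Type-II error $\p(\wt L_\lambda \leq \wt m_\lambda \mid \bsH_2)$ converge to the same tail probability
\[
\p\!\left(\caN(0,1) > \frac{\wt m_{k_2} - \wt m_{k_1}}{2\sqrt{\wt V_0}}\right) \;=\; \tfrac{1}{2}\erfc\!\left(\frac{\wt m_{k_2} - \wt m_{k_1}}{2\sqrt{2\wt V_0}}\right).
\]
Adding them, the total error converges to $\erfc\!\bigl((\wt m_{k_2} - \wt m_{k_1})/(2\sqrt{2\wt V_0})\bigr)$.

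It then remains to match this with the stated expression by direct algebra. Let
\[
A := -\log(1-\lambda\fh) + \left(\frac{2\fh_d}{w_2} - \fh\right)\lambda + \left(\frac{(\gh)^2}{\wt w_4 - 1} - \frac{(\fh)^2}{2}\right)\lambda^2.
\]
From the formula for $\wt m_k$ in Theorem \ref{thm:trans_main}, the $k$-independent part cancels in the difference, giving $\wt m_{k_2} - \wt m_{k_1} = (k_2 - k_1) A$. The key observation is that the formula for $\wt V_0$ is precisely $\wt V_0 = 2A$ (each term of $\wt V_0$ is twice the corresponding term in $A$). Substituting yields
\[
\frac{\wt m_{k_2}-\wt m_{k_1}}{2\sqrt{2\wt V_0}} \;=\; \frac{(k_2-k_1)A}{4\sqrt{A}} \;=\; \frac{k_2-k_1}{4}\sqrt{A},
\]
which is exactly the argument of $\erfc$ in the claim.

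The only substantive input is Theorem \ref{thm:trans_main}; the rest is a one-line algebraic identity. There is no real obstacle here beyond bookkeeping, and the exact relation $\wt V_0 = 2A$ is what makes the midpoint threshold simultaneously balance Type-I and Type-II errors, mirroring the analogous identity implicit in the untransformed version (Theorem \ref{thm:test}). One should, however, double-check that Assumption \ref{assump:entry} is being invoked when applying Theorem \ref{thm:trans_main} under both $\bsH_1$ and $\bsH_2$ (including $k_1 = 0$, in which case the theorem reduces to the null-case CLT for $\wt M$).
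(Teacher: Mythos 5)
Your proposal is correct and matches the paper's approach: the paper likewise derives this result directly from the CLT for $\wt L_\lambda$ (Theorem \ref{thm:trans_CLT} via Theorem \ref{thm:trans_main}), with the same midpoint-threshold Gaussian tail computation that it references for the untransformed case. Your explicit verification of the identity $\wt V_0 = 2(\wt m_{k_2}-\wt m_{k_1})/(k_2-k_1)$ is exactly the bookkeeping the paper leaves implicit.
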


\begin{proof}
	Theorem \ref{thm:test} is a direct consequence of Theorem \ref{thm:trans_CLT}.
\end{proof}

In Appendix \ref{sec:ex}, we consider spiked Wigner matrices with non-Gaussian noise and show both theoretically
and numerically that the error from Algorithm \ref{alg:htet} is lower than that of Algorithm \ref{alg:ht}.

\begin{algorithm}[h]
	\caption{Hypothesis test with the entrywise transformation}
	\label{alg:htet}
	
	\begin{algorithmic}
		\STATE \textbf{Data}: $M_{ij}$, parameters $w_2, w_4$, $\lambda$, densities $g, g_d$
		\STATE $\tM \gets$ transformed matrix in \eqref{eq:tM1}, \hskip5pt $\wt L_{\lambda} \gets$ test statistic in \eqref{eq:wt L_lambda}, \hskip5pt $\wt m_{\lambda} \gets$ critical value in \eqref{eq:wt m_lambda}
		
		\IF{$\wt L_{\lambda} \leq \wt m_{\lambda}$} \STATE{ {\textbf{Accept}} $\bsH_1$ }
		\ELSE \STATE{ {\textbf{Accept}} $\bsH_2$ }
		\ENDIF
		
	\end{algorithmic}
\end{algorithm}

\subsection{Rank estimation} \label{subsec:adaptive}

The test in Algorithm \ref{alg:ht} requires prior knowledge about $k_1$ and $k_2$, the possible ranks of the planted spike. In this section, we adapt the idea to estimate the rank of the signal when there is no prior information on the rank $k$. Recall that the test statistic $L_{\lambda}$ defined in \eqref{eq:L_lambda} does not depend on the rank of the matrix. As proved in Theorem \ref{thm:main-weak}, the test statistic $L_{\lambda}$ converges to a Gaussian random variable with mean $m_k$ and the variance $V_0$, where $m_k$ is equi-distributed with respect to $k$ and $V_0$ does not depend on $k$. It is then natural to set the best candidate for $k$, which we call $\kappa$, be the minimizer of the distance $|L_{\lambda} - m_k|$. This procedure is equivalent to find the nearest nonnegative integer of the value
\beq \label{eq:adaptive_value}
\kappa' := \frac{L_{\lambda} - m_0}{-\log(1-\lambda) + \left( \frac{2}{w_2} - 1 \right) \lambda + \left( \frac{1}{w_4-1} - \frac{1}{2} \right) \lambda^2}
\eeq
rounding half down.

We describe the test in Algorithm \ref{alg:at}; its probability of error converges to
\beq \begin{split} \label{eq:rank_k_error}
	& \p(k=0) \cdot \p \left( Z > \frac{\sqrt{V_0}}{4} \right) + \sum_{i=1}^{\infty} \p(k=i) \cdot \p \left( |Z| > \frac{\sqrt{V_0}}{4} \right) \\
	&= \left( 1 - \frac{\p(k=0)}{2} \right) \cdot\erfc \left( \frac{1}{4} \sqrt{-\log (1-\lambda) + \left( \frac{2}{w_2} - 1 \right) \lambda + \left( \frac{1}{w_4-1} - \frac{1}{2} \right) \lambda^2} \right),
\end{split} \eeq
where $Z$ is a standard Gaussian random variable. Note that it depends only on $\p(k=0)$.

\begin{algorithm}[h]
	\caption{Rank estimation}
	\label{alg:at}
	
	\begin{algorithmic}
		\STATE \textbf{Data}: $M_{ij}$, parameters $w_2, w_4$, $\lambda$
		\STATE $L_{\lambda} \gets$ test statistic in \eqref{eq:L_lambda}, \hskip5pt $m_0 \gets$ mean in \eqref{eq:m_lambda}, \hskip5pt $m_1 \gets$ mean in \eqref{eq:m_k} with $k=1$
		\STATE $\kappa' \gets$ value in \eqref{eq:adaptive_value}
		
		\IF{$L_{\lambda} \leq (m_0 + m_1)/2$} \STATE{ {\textbf{Set}} $\kappa=0$ }
		\ELSE \STATE{ {\textbf{Set}} $\kappa = \lceil \kappa' - 0.5 \rceil$ }
		\ENDIF
		
	\end{algorithmic}
\end{algorithm}

The error can be lowered if the range of $k$ is known a priori. See Appendix \ref{sec:ex}.

\section{LR test for spiked Gaussian Wigner matrices} \label{sec:Gaussian}

We next compare the limiting error of the proposed test in Theorem \ref{thm:test} and that of the LR test. We consider the fluctuation of the LR of the spiked Gaussian Wigner model defined in Definition \ref{def:Gaussian}.

\begin{defn}[Likelihood ratio] 
	For a data matrix $Y$ in Definition \ref{def:Gaussian}, the likelihood ratio (or the Radon--Nikodym derivative) of $\p_2$ with respect to $\p_1$ is 
	\[
	\caL(Y; k_1, k_2):=\frac{\dd\p_2}{\dd\p_1}.
	\]
\end{defn}

\subsection{Gaussian convergence of the log-LR for spiked Gaussian Wigner matrices}

Let $X^{[\ell]}_i = (x^{[\ell]}_i(1), x^{[\ell]}_i(2), \dots, x^{[\ell]}_i(k_\ell))$ be the $i$-th row vector of $X^{[\ell]}$ for $\ell=1,2$. Note that 
\[
X^{[1]}_i\sim\caP^{\otimes k_1} =: \caP_{0,1}, \qquad X^{[2]}_i \sim \caP^{\otimes k_2} =: \caP_{0,2} \qquad (i=1, 2, \dots, N).
\] 
Similarly, we also let $X_i^\ast$ be the $i$-th row vector of $X^\ast$, and	
\[
X_i^\ast=(x_i^\ast(1),\dots,x_i^\ast(k_2)) \sim \caP_{0,2}.
\]
Conditioning on $X^*$, from the Gaussianity of $W$, we first observe that the posterior distribution of $X$ for given data $Y$ is
\[
\dd\bbP_\ell(X|Y)=\frac{e^{-H^{k_\ell}(X^{[\ell]})} \dd\caP_{0,\ell}^{\otimes N}(X^{[\ell]})}{\int e^{-H^{k_\ell}(X^{[\ell]})} \dd\caP_{0,\ell}^{\otimes N}(X^{[\ell]})}
\]
where the Hamiltonian $H^{k_\ell}(X^{[\ell]})$ is given by
\[ 
\begin{split} 
-H^{k_{\ell}}(X^{[\ell]}) &= \sum_{i<j}^N \sum_{n=1}^{k_\ell} \left[ \sqrt{\frac{\lambda}{N}} Y_{ij} \,x^{[\ell]}_i(n) \,x^{[\ell]}_j(n)- \frac{\lambda}{2N} x^{[\ell]}_i(n) \,x^{[\ell]}_j(n)\sum_{m=1}^{k_\ell}x^{[\ell]}_i(m) \,x^{[\ell]}_j(m) \right] \\
&\quad + \frac{1}{w_2} \sum_{i=1}^{N} \sum_{n=1}^{k_\ell} \left[ \sqrt{\frac{\lambda}{N}} Y_{ii}\, x^{[\ell]}_i(n)^2 - \frac{\lambda}{2N} x^{[\ell]}_i(n)^2\sum_{m=1}^{k_\ell}x^{[\ell]}_i(m)^2 \right]
\end{split} 
\]
for $\ell=1,2$. For convenience, we let 
\[
\caL(Y;k_\ell)=\int e^{-H^{k_\ell}(X^{[\ell]})} \dd\caP_{0,\ell}^{\otimes N}(X^{[\ell]}).
\]
From Bayes' theorem, it is natural to define the LR between two hypotheses by
\beq
\caL(Y; k_1, k_2)=\caL(Y;k_2)/\caL(Y;k_1).
\eeq

The log-LR converges to a Gaussian as in the following theorem.

\begin{thm}\label{thm:main}
	Assume that the prior $\caP$ is centered, has unit variance and bounded support. Then, there exists $\lambda_0 \equiv \lambda_0 (k_1, k_2,\caP) \in (0, 1]$ such that for $\lambda<\lambda_0$ 
	\[
	\log\caL(Y; k_1, k_2)\Rightarrow\mathcal{N}(-\mu, 2\mu) \qquad \text{ under } \bsH_1,
	\]
	\[
	\log\caL(Y; k_1, k_2)\Rightarrow\mathcal{N}(\mu, 2\mu) \qquad \text{ under } \bsH_2,
	\]
	where
	\beq
	\mu \equiv \mu(k_1, k_2) = \frac{(k_1-k_2)^2}{4} \left(-\log (1-\lambda) + \left( \frac{2}{w_2} - 1 \right) \lambda\right).
	\eeq
\end{thm}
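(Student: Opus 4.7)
The overall strategy is to introduce a reference null measure, establish a joint CLT for two log-partition functions under that null, and then transfer to the two hypotheses by a Cameron--Martin Gaussian tilt, extending the rank-one framework of \cite{AlaouiJordan2018}. Let $\bbP_0$ denote the law of $Y$ with no spike (i.e., $Y = W$), and set $Z_\ell := \log \caL(Y; k_\ell)$, so that
\[
\log \caL(Y; k_1, k_2) = Z_2 - Z_1, \qquad d\bbP_\ell / d\bbP_0 = e^{Z_\ell} \quad (\ell = 1, 2).
\]
Thus the joint Gaussian law of $(Z_1, Z_2)$ under $\bbP_0$ determines the marginal law of $Z_2 - Z_1$ under both $\bsH_1$ and $\bsH_2$.

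\emph{Step 1 (joint CLT under $\bbP_0$).} I would compute the mixed moments $\E_0\bigl[\caL(Y; k_1)^p \, \caL(Y; k_2)^q \bigr]$ by the replica method: introduce $p$ replicas of rank $k_1$ and $q$ replicas of rank $k_2$, swap the $Y$-expectation with the replica integrals, and perform the (exactly Gaussian) $Y$-integration. The resulting integrand is the exponential of a quadratic form in the block overlaps
\[
Q^{\alpha\beta}_{nm} := \tfrac{1}{N} \sum_i x_i^{\alpha}(n) \, x_i^{\beta}(m),
\]
with diagonal entries handled separately because $H_{ii}$ has variance $w_2 / N$ rather than $1/N$. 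For $\lambda$ below a threshold $\lambda_0(k_1, k_2, \caP)$, the boundedness of $\caP$ together with standard second-moment concentration for overlaps confines the relevant overlaps to the $N^{-1/2}$-window about $0$, so a Gaussian saddle-point evaluation yields
\[
\E_0[Z_\ell] \to -\nu_\ell, \qquad \var_0(Z_\ell) \to 2\nu_\ell, \qquad \covar_0(Z_1, Z_2) \to c,
\]
with, writing $C := -\log(1-\lambda) + (2/w_2 - 1)\lambda$,
\[
\nu_\ell = \tfrac{k_\ell^2}{4}\, C, \qquad c = \tfrac{k_1 k_2}{2}\, C.
\]
The $-\log(1-\lambda)$ piece comes from summing closed loops in the off-diagonal overlap expansion (producing $\sum_{r \geq 1} \lambda^r / r$), and the $(2/w_2 - 1)\lambda$ piece is the leading-order diagonal correction. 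Joint Gaussianity follows by checking that higher cumulants of $(Z_1, Z_2)$ vanish, i.e., non-Gaussian overlap contractions are $o(1)$.

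\emph{Step 2 (transfer via Cameron--Martin).} If $(Z_1, Z_2)$ is jointly Gaussian under $\bbP_0$ with means $(-\nu_1, -\nu_2)$ and covariance $\Sigma = \bigl(\begin{smallmatrix} 2\nu_1 & c \\ c & 2\nu_2 \end{smallmatrix}\bigr)$, then completing the square in $e^{z_\ell} f_0(z)$ shows that under $\bbP_\ell$ the pair $(Z_1, Z_2)$ is again jointly Gaussian with the \emph{same} covariance $\Sigma$ and mean shifted by the $\ell$-th column of $\Sigma$; the normalization $\E_0[e^{Z_\ell}] = e^{-\nu_\ell + \nu_\ell} = 1$ is consistent. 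Hence $Z_2 - Z_1$ is Gaussian with variance $2\nu_1 + 2\nu_2 - 2c = 2\mu$, mean $(c - \nu_2) - \nu_1 = -\mu$ under $\bbP_1$, and mean $\nu_2 - (c - \nu_1) = +\mu$ under $\bbP_2$, where
\[
\mu = \nu_1 + \nu_2 - c = \tfrac{k_1^2 + k_2^2 - 2 k_1 k_2}{4}\, C = \tfrac{(k_1 - k_2)^2}{4} \Bigl( -\log(1-\lambda) + (2/w_2 - 1)\lambda \Bigr),
\]
in agreement with the statement. The coincidence of mean and half-variance is, as usual, a hallmark of the log-LR.

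\emph{Main obstacle.} The hard step is Step 1, and specifically the cross-covariance $c = \tfrac{k_1 k_2}{2}\, C$: it requires coupling replicas of \emph{different} ranks, producing a rectangular $k_1 \times k_2$ cross-overlap block that has no analogue in the rank-one CLTs of \cite{Baik-Lee2017,AlaouiJordan2018,Chung-Lee2019}. Ensuring $L^2$-integrability of the mixed replica exponential uniformly in $(p, q)$ is what fixes the quantitative value of the threshold $\lambda_0 = \lambda_0(k_1, k_2, \caP)$, and the bounded-support assumption on $\caP$ is what allows one to control the tails of both the diagonal block overlaps (which create the usual rank-one obstruction at $\lambda = 1$) and the rectangular cross-block overlaps on the same footing.
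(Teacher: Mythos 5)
Your Step 2 is sound: it is Le Cam's third lemma applied to the joint null limit of $(Z_1,Z_2)$, your arithmetic $\mu=\nu_1+\nu_2-c$ with $c=\tfrac{k_1k_2}{2}C$ reproduces the theorem's constants, and you correctly single out the rectangular $k_1\times k_2$ cross-overlap block as the genuinely new object relative to the rank-one case. The gap is in Step 1. The replica trick computes the integer exponential moments $\E_0[\caL(Y;k_1)^p\,\caL(Y;k_2)^q]=\E_0[e^{pZ_1+qZ_2}]$, and convergence of these to the Gaussian moment generating function does \emph{not} yield the joint CLT for $(Z_1,Z_2)$: the putative limit law of $(\caL_1,\caL_2)$ is log-normal, and the log-normal distribution is famously not determined by its moments. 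The ``higher cumulants of $(Z_1,Z_2)$'' you would need to kill are cumulants of the \emph{logarithms}, which the replica expansion does not produce. There is also a quantitative obstruction: for a bounded prior the overlap large-deviation rate function is bounded, so $\E_0[\caL_1^p\caL_2^q]$ diverges exponentially in $N$ once $(p+q)\lambda$ exceeds a fixed constant; hence no single $\lambda_0(k_1,k_2,\caP)>0$ makes the entire moment family converge, and the moment method cannot be run to all orders on a fixed interval $(0,\lambda_0)$.

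The repair is to replace unbounded moments by a bounded functional of $\log\caL$, which is what the paper does. It fixes $s\in\R$, works under $\bsH_2$ with the characteristic function $\phi_N(\lambda)=\E_{\p_2}[e^{\ii s\log\caL}]$, differentiates in $\lambda$, and uses Gaussian integration by parts (Stein's lemma) to derive the self-consistent ODE $\phi_N'(\lambda)=\tfrac{\ii s-s^2}{4}\bigl(\tfrac{(k_1-k_2)^2\lambda}{1-\lambda}+\tfrac{2(k_1-k_2)^2}{w_2}\bigr)\phi_N(\lambda)+O(N^{-1/2})$, whose error terms are controlled by cavity/interpolation bounds $\E\langle R^2\rangle=O(N^{-1})$ and $\E\langle R^4\rangle=O(N^{-2})$ valid for $\lambda<\lambda_0$ --- applied to exactly the family of overlaps you identified, including the rectangular cross-overlaps $R^{[1,2]}$, with the Nishimori property doing the bookkeeping between replicas and the spike. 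If you want to keep your null-reference decomposition, you must replace the moment computation of Step 1 by such a characteristic-function argument, or by showing that each $Z_\ell$ equals an explicit asymptotically Gaussian quadratic functional of the noise up to $o_P(1)$.
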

We give a sketch of the proof of Theorem \ref{thm:main} in Section \ref{sec:LR}; see Appendix \ref{app:thm} for the detail of the proof.
With Theorem \ref{thm:main} and Le Cam's first lemma, we obtain the following corollary by a contiguity argument.

\begin{cor}\label{cor:contiguity}
	Under the assumptions of Theorem \ref{thm:main}, $\p_1$ and $\p_2$ are mutually contiguous.
\end{cor}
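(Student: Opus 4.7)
The plan is a direct application of Le Cam's first lemma to the two Gaussian limits established in Theorem \ref{thm:main}. Recall that the lemma says: if $Q_n, P_n$ are probability measures and $\log(\dd Q_n/\dd P_n) \Rightarrow W$ under $P_n$ for some random variable $W$, then $\{Q_n\}$ is contiguous with respect to $\{P_n\}$ provided $\E[e^{W}] = 1$.

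First I would apply this to show $\p_2 \triangleleft \p_1$. Under $\bsH_1$, Theorem \ref{thm:main} gives
\[
\log \caL(Y; k_1, k_2) = \log \frac{\dd \p_2}{\dd \p_1} \;\Rightarrow\; \caN(-\mu, 2\mu).
\]
If $W \sim \caN(-\mu, 2\mu)$, then the moment generating function of $W$ at $1$ is
\[
\E[e^{W}] = \exp\!\bigl(-\mu + \tfrac{1}{2}\cdot 2\mu\bigr) = 1,
\]
so Le Cam's first lemma yields $\p_2 \triangleleft \p_1$.

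For the reverse direction, I would simply observe that $\log(\dd \p_1/\dd \p_2) = -\log \caL(Y;k_1,k_2)$, and under $\bsH_2$ Theorem \ref{thm:main} implies
\[
-\log \caL(Y; k_1, k_2) \;\Rightarrow\; \caN(-\mu, 2\mu).
\]
Exactly the same moment generating function computation gives $\E[e^{-W'}] = 1$ for $W' \sim \caN(\mu, 2\mu)$, so another application of Le Cam's first lemma yields $\p_1 \triangleleft \p_2$. Combining the two contiguity relations gives the mutual contiguity asserted in the corollary.

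There is no real obstacle here: the variance-equals-twice-the-absolute-mean structure of the Gaussian limits in Theorem \ref{thm:main} is exactly the condition that makes $\E[e^W] = 1$, which is the content one must verify to invoke Le Cam's first lemma. The only thing that requires a small amount of care is ensuring that weak convergence of the log-likelihood ratio suffices (as opposed to, say, uniform integrability of $e^W$), but this is the standard statement of the lemma and follows from the fact that the Gaussian limit has finite moment generating function at $1$.
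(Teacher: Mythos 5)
Your proposal is correct and follows exactly the route the paper intends: the paper derives the corollary from Theorem \ref{thm:main} via Le Cam's first lemma, and your computation that the $\caN(\mp\mu, 2\mu)$ limits satisfy $\E[e^{W}]=1$ (the variance being twice the absolute mean) is precisely the verification needed in both directions. Nothing further is required.
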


In the LR test, we accept $\bsH_1$ if $\caL(Y; k_1, k_2) \leq 1$ and accept $\bsH_2$ if $\caL(Y; k_1, k_2) > 1$. The error of such a test is
\beq
\err^*(k_1, k_2)=\p_1(\caL(Y; k_1, k_2)\leq1)+\p_2(\caL(Y; k_1, k_2)>1).
\eeq 
In the next theorem, we compute the limiting error of the LR test, which coincides with the error of the proposed test in Theorem \ref{thm:test}.

\begin{thm}\label{thm:LR_err}
	Under the assumptions of Theorem \ref{thm:main}, if $\lambda < \lambda_0$ then
	\beq \label{eq:error_limit}
	\lim_{N\to\infty}\err^*(k_1, k_2) = \erfc\left(\frac{k_2-k_1}{4} \sqrt{-\log (1-\lambda) + \left( \frac{2}{w_2} - 1 \right) \lambda}\right)
	\eeq
	where $\erfc(\cdot)$ is the complementary error function defined as $\erfc(x)=\frac{2}{\sqrt{\pi}}\int_{x}^{\infty}e^{-t^2}\dd t$.
\end{thm}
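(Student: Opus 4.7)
The plan is to deduce Theorem \ref{thm:LR_err} directly from the Gaussian CLT for the log-likelihood ratio established in Theorem \ref{thm:main}. Since the LR test accepts $\bsH_1$ on $\{\log\caL \leq 0\}$ and $\bsH_2$ on $\{\log\caL > 0\}$, the sum of Type-I and Type-II errors is
\[
\err^{*}(k_1,k_2) = \p_1(\log\caL > 0) + \p_2(\log\caL \leq 0).
\]
First I would invoke Theorem \ref{thm:main}: under $\bsH_1$, $\log\caL \Rightarrow \caN(-\mu, 2\mu)$, and under $\bsH_2$, $\log\caL \Rightarrow \caN(\mu, 2\mu)$, with $\mu = \mu(k_1,k_2)$ as stated there. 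Because each limiting Gaussian is absolutely continuous, the boundary $\{0\}$ is a null set of the limit law, so the Portmanteau theorem upgrades convergence in distribution to convergence of the two tail probabilities at $0$:
\[
\p_1(\log\caL > 0) \to \p\!\left(Z > \sqrt{\mu/2}\right), \qquad \p_2(\log\caL \leq 0) \to \p\!\left(Z > \sqrt{\mu/2}\right),
\]
where $Z$ is a standard Gaussian. The equality of the two tails reflects the fact that the variance $2\mu$ is exactly twice the absolute mean $\mu$, so the two limiting Gaussians are mirror images of each other about $0$.

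Second, I would convert the Gaussian tail to $\erfc$ via the identity $\p(Z > a) = \tfrac{1}{2}\erfc(a/\sqrt{2})$, obtaining
\[
\lim_{N\to\infty}\err^{*}(k_1,k_2) = 2\,\p\!\left(Z > \sqrt{\mu/2}\right) = \erfc(\sqrt{\mu}/2).
\]
Plugging in the explicit $\mu$ from Theorem \ref{thm:main} and using $k_1 < k_2$ yields
\[
\frac{\sqrt{\mu}}{2} = \frac{k_2-k_1}{4}\sqrt{-\log(1-\lambda) + \left(\frac{2}{w_2}-1\right)\lambda},
\]
which matches the claimed expression.

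The main obstacle here is not in this deduction, which is routine Gaussian-tail bookkeeping together with one application of Portmanteau, but in the underlying Gaussian CLT for $\log\caL$ supplied by Theorem \ref{thm:main}; once that ingredient is granted, the corollary is immediate. In particular, no contiguity argument (Corollary \ref{cor:contiguity}) is needed for this step, since the two error probabilities are computed under $\p_1$ and $\p_2$ separately and each uses only the marginal CLT of $\log\caL$ under that measure. The only point worth double-checking is that the absolute continuity of the limiting law rules out any boundary mass at $0$, which is what allows weak convergence to pass through the indicators $\indi\{\log\caL > 0\}$ and $\indi\{\log\caL \leq 0\}$.
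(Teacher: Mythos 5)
Your proposal is correct and matches the paper's (one-line) proof, which simply declares the result a direct consequence of Theorem \ref{thm:main}; the Portmanteau step, the mirror symmetry of the two limiting Gaussians $\caN(\pm\mu,2\mu)$, and the conversion $2\,\p(Z>\sqrt{\mu/2})=\erfc(\sqrt{\mu}/2)$ are exactly the intended bookkeeping.
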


\begin{proof}
	Theorem \ref{thm:LR_err} is a direct consequence of Theorem \ref{thm:main}.
\end{proof}


\subsection{Sketch of the proof of Theorem \ref{thm:main}} \label{sec:LR}

First, note that it suffices to prove the statement under any of the hypotheses since fluctuation under the other is derived easily as a consequence of Le Cam's third Lemma (see, e.g., Theorem 6.6 in \cite{Vaart1998}.) For simplicity, we assume $\bsH_2$.

Our main strategy for the proof of Theorem \ref{thm:main} is to analyze the limiting behavior of the characteristic function $\phi_N(\lambda)$ of the log-LR, defined as 
\beq\label{char_log-LR}
\phi_N(\lambda)=\E_{\p_2}\left[e^{\ii s\log\caL(Y; k_1, k_2)}\right]
\eeq
for a fixed $s \in \R$. 

In the case $w_2 = \infty$, differentiating $\phi_N$ and applying Gaussian integration by parts, we find that $\phi_N$ is asymptotically the solution of the initial value problem of the ODE
\beq \label{eq:phi'}
\phi_N'(\lambda) = \frac{\ii s-s^2}{4} \cdot \frac{(k_1-k_2)^2 \lambda}{1-\lambda} \phi_N(\lambda) + O(N^{-\frac{1}{2}}).
\eeq
with the initial value $\phi_N(0) = 1$.
(See also Lemma 8 and Proposition 9 of \cite{AlaouiJordan2018}.) Since $\phi_N(0) = 1$, integrating \eqref{eq:phi'} with respect to $\lambda$, for any $\lambda<\lambda_0(k_1,k_2)$ and $s\in\R$ 
\beq
|\phi_N(\lambda)-e^{(\ii s-s^2)\mu}| = O(N^{-\frac{1}{2}}),
\eeq 
where $\mu=\frac{(k_2-k_1)^2}{4}\left(-\log(1-\lambda)-\lambda\right)$. The desired result for $w_2 = \infty$ now directly follows.

In the case $w_2 < \infty$, we need to add the contribution from the diagonal term. Following the cavity computation in \cite{AlaouiJordan2018}, we obtain that $\phi_N$ satisfies the following deformed ODE 
\beq \label{eq:phi'2}
\phi_N'(\lambda) = \frac{\ii s-s^2}{4} \cdot \frac{(k_1-k_2)^2 \lambda}{1-\lambda}  \phi_N(\lambda) + \frac{\ii s - s^2}{2w_2} \cdot  (k_1-k_2)^2 \phi_N(\lambda) + O(N^{-\frac{1}{2}}),
\eeq
with the same initial value in case without diagonal elements and we obtain the desired result by integrating \eqref{eq:phi'2} with respect to $\lambda$. See Appendix \ref{app:thm} for more detail on the derivation of \eqref{eq:phi'} and \eqref{eq:phi'2}.

\section{Central Limit Theorems for Spiked Wigner Matrices} \label{sec:LSS}

In this section, we collect our results on general CLTs for the LSS of spiked Wigner matrices. To precisely define the statements, we introduce the Chebyshev polynomials of the first kind.

\begin{defn}[Chebyshev polynomial]
	The $n$-th Chebyshev polynomial (of the first kind) $T_n$ is a degree $n$ polynomial defined by $T_0(x) = 1$, $T_1(x) = x$, and
	\[
	T_{n+1}(x) = 2x T_n(x) - T_{n-1}(x).
	\]
\end{defn}

We first state a CLT for the LSS that generalizes Theorem 5 of \cite{Chung-Lee2019}.
\begin{thm} \label{thm:CLT}
	Assume the conditions in Theorem \ref{thm:main-weak}. Then, for any function $f$ analytic on an open interval containing $[-2, 2]$,
	\[
	\left( \sum_{i=1}^N f(\mu_i) - N \int_{-2}^2 \frac{\sqrt{4-z^2}}{2\pi} f(z) \, \dd z \right) \Rightarrow \caN\left(m_k(f), V_0(f)\right)\,.
	\]
	The mean and the variance of the limiting Gaussian distribution are given by
	\[ \begin{split} 
	m_k(f) = \frac{1}{4} \left( f(2) + f(-2) \right) -\frac{1}{2} \tau_0(f) + (w_2 -2) \tau_2(f) 
	+ (w_4-3) \tau_4(f) + k \sum_{\ell=1}^{\infty} \sqrt{\lambda^{\ell}} \tau_{\ell}(f),
	\end{split} 
	\]
	\[ \begin{split} 
	V_0(f) = (w_2-2) \tau_1(f)^2 + 2(w_4-3) \tau_2(f)^2 
	+ 2\sum_{\ell=1}^{\infty} \ell \tau_{\ell}(f)^2\,,
	\end{split} 
	\]
	where we let
	\[
	\tau_{\ell}(f) = \frac{1}{\pi} \int_{-2}^2 T_{\ell} \left( \frac{x}{2} \right) \frac{f(x)}{\sqrt{4-x^2}} \dd x.
	\]
\end{thm}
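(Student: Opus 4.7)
The plan is to follow the direct interpolation strategy flagged in the introduction: compare the LSS of the spiked matrix $M$ with that of the bare Wigner matrix $H$, and show that the spike contributes an asymptotically \emph{deterministic} shift of size $k\sum_{\ell\ge1}\sqrt{\lambda^{\ell}}\,\tau_{\ell}(f)$ without altering the variance. The classical CLT for the LSS of $H$ (Bai--Yao, Lytova--Pastur) supplies the $k$-independent part of $m_k(f)$ and the whole of $V_0(f)$, so the task reduces to analyzing the difference $\sum_i f(\mu_i^{M}) - \sum_i f(\mu_i^{H})$.

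The starting point is the Cauchy representation
\[
\sum_{i}f(\mu_i^{M}) - N\!\int f\,\dd\rho_{sc}
= \frac{1}{2\pi\ii}\oint_{\Gamma} f(z)\bigl(\Tr(zI-M)^{-1} - Nm_{sc}(z)\bigr)\,\dd z,
\]
on a contour $\Gamma$ enclosing $[-2,2]$ at $O(1)$ distance; this is admissible because in the subcritical regime $\lambda<1$ the spectrum of $M$ stays in a neighborhood of $[-2,2]$. The Weinstein--Aronszajn / Schur identity
\[
\det(zI-M) = \det(zI-H)\,\det\!\bigl(I_k-\sqrt{\lambda}\,X^{T}(zI-H)^{-1}X\bigr)
\]
gives, after $\tfrac{\dd}{\dd z}\log$ and integration by parts on the closed contour,
\[
\sum_i f(\mu_i^{M}) - \sum_i f(\mu_i^{H})
= -\frac{1}{2\pi\ii}\oint_{\Gamma} f'(z)\,\log\det\!\bigl(I_k-\sqrt{\lambda}\,X^{T}(zI-H)^{-1}X\bigr)\,\dd z.
\]

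The crucial analytic input is the isotropic local law for the Wigner resolvent: for deterministic unit vectors $u,v\in\R^{N}$ and $z\in\Gamma$,
\[
u^{T}(zI-H)^{-1}v = m_{sc}(z)\,u^{T}v + O_{\prec}(N^{-1/2}).
\]
Applied entrywise to the $k\times k$ matrix $X^{T}(zI-H)^{-1}X$, this yields the deterministic equivalent $m_{sc}(z)\,I_k$, whence
\[
\log\det\!\bigl(I_k-\sqrt{\lambda}\,X^{T}(zI-H)^{-1}X\bigr)
= -k\sum_{\ell\ge 1}\frac{\sqrt{\lambda^{\ell}}}{\ell}\,m_{sc}(z)^{\ell} + o(1),
\]
the series converging since $\lambda<1$ forces $|\sqrt{\lambda}\,m_{sc}(z)|<1$ on $\Gamma$. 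The Chebyshev identity
\[
\frac{1}{2\pi\ii}\oint_{\Gamma} f'(z)\,m_{sc}(z)^{\ell}\,\dd z = \ell\,\tau_{\ell}(f),
\]
proved via the Joukowsky substitution $z=\zeta+\zeta^{-1}$ (so that $m_{sc}(z)=\zeta^{-1}$ and $\tau_\ell(f)$ appears as the $\zeta^{\ell}$-Fourier coefficient of $f(\zeta+\zeta^{-1})$), then collapses the contour integral to $k\sum_{\ell\ge1}\sqrt{\lambda^{\ell}}\,\tau_\ell(f)$, giving the advertised mean shift.

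The main obstacle is controlling $X^{T}(zI-H)^{-1}X$ uniformly in $z\in\Gamma$ so that the $O_{\prec}(N^{-1/2})$ error of the isotropic law, after being propagated through $\log\det$ and the contour integral, contributes only negligible terms to both the mean and the variance. In particular, joint convergence with the Wigner LSS CLT must be established so that the variance stays $V_0(f)$ and the spike affects only the mean; because the $k$ columns of $X$ need not be exactly orthonormal, one must also verify that $X^{T}X$ is close enough to $I_k$ for $\log\det\bigl(I_k-\sqrt{\lambda}\,m_{sc}(z)X^{T}X\bigr) = k\log(1-\sqrt{\lambda}\,m_{sc}(z)) + o(1)$ to hold on $\Gamma$. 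Once these estimates are in place, the theorem follows by combining the classical Wigner LSS CLT with the deterministic Chebyshev computation above.
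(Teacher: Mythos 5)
Your argument is correct in outline and arrives at the same deterministic mean shift as the paper, but by a genuinely different route. The paper interpolates $M(\theta)=\theta\sqrt{\lambda}XX^T+H$ and integrates $\frac{\partial}{\partial\theta}\Tr R(\theta,z)=-\sqrt{\lambda}\sum_m \frac{\partial}{\partial z}\bigl(\bsx(m)^T R(\theta,z)\bsx(m)\bigr)$ over $\theta\in[0,1]$, using a resolvent expansion to transfer the isotropic local law from $R(0,z)$ to $R(\theta,z)$; the resulting integrand $-k\sqrt{\lambda}s'(z)/(1+\sqrt{\lambda}s(z))$ is exactly the $z$-derivative of your $\log\det$ term, so the two computations are algebraically equivalent. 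What your Weinstein--Aronszajn shortcut buys is that the isotropic local law is needed only for the unperturbed resolvent of $H$, not for the whole interpolating family; what the paper's interpolation buys is robustness, since essentially the same argument is reused for the entrywise-transformed matrix in Theorem \ref{thm:trans_CLT}, where the perturbation is no longer an exact finite-rank update and your determinant identity would not apply as directly. Two technical points that you flag but should not underestimate: (i) Definition \ref{def:spiked_Wigner} only normalizes the columns of $X$, yet both proofs need $X^TX$ close to $I_k$ to reduce $\log\det\bigl(I_k-\sqrt{\lambda}\,X^T(zI-H)^{-1}X\bigr)$ to $k\log\bigl(1-\sqrt{\lambda}\,m_{sc}(z)\bigr)$, so near-orthogonality of the columns must be assumed or derived; (ii) the $\caO(N^{-1/2})$ error in the isotropic law is not uniform down to the real axis, so, as in the paper, the contour must be split at $|\im z|=N^{-1/2}$ with a crude rigidity/interlacing bound on the remaining arcs --- asserting a uniform $N^{-1/2}$ error on all of $\Gamma$ is not quite legitimate. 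With those repairs, your decomposition into the Bai--Yao fluctuation of the pure Wigner part plus an asymptotically deterministic spike contribution, followed by the Joukowsky/Chebyshev evaluation of $\oint_\Gamma f'(z)\,m_{sc}(z)^\ell\,\dd z=\ell\,\tau_\ell(f)$, gives a complete and arguably cleaner proof of the mean formula, with the variance untouched for the same reason as in the paper.
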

We will give a proof of Theorem \ref{thm:CLT} in Appendix \ref{app:CLT}.

The next result shows that the proposed test in Algorithm \ref{alg:ht} achieves the lowest error among all tests based on LSS.
\begin{thm} \label{thm:optimize}
	Assume the conditions in Theorem \ref{thm:CLT}. If $w_2 > 0$ and $w_4 > 1$, then
	\beq \label{eq:upper_bound}
	\left|\frac{m_{k_2}(f) - m_{k_1}(f)}{\sqrt{V_0(f)}} \right| \leq \left| \frac{m_{k_2}-m_{k_1}}{\sqrt{V_0}} \right|.
	\eeq
	The equality holds if and only if $f = C_1 \varphi_{\lambda} + C_2$ for some constants $C_1$ and $C_2$ where
	\[
	\varphi_{\lambda}(x) := \log \left( \frac{1}{1-\sqrt{\lambda}x + \lambda} \right) + \sqrt{\lambda} \left( \frac{2}{w_2} - 1 \right) x 
	+ \lambda^2 \left( \frac{1}{w_4-1} - \frac{1}{2} \right) x^2.
	\]
\end{thm}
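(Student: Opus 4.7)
The plan is to recast \eqref{eq:upper_bound} as a Cauchy--Schwarz inequality in the Chebyshev coefficients $\{\tau_\ell(f)\}$. Reading off the $k$-dependent part of $m_k(f)$ in Theorem~\ref{thm:CLT} shows that all $k$-independent corrections cancel in the difference, so
\[
m_{k_2}(f)-m_{k_1}(f)=(k_2-k_1)\sum_{\ell=1}^{\infty}\lambda^{\ell/2}\tau_\ell(f).
\]
For the variance, I would absorb the anomalous terms $(w_2-2)\tau_1(f)^2$ and $2(w_4-3)\tau_2(f)^2$ into the generic sum $2\sum_{\ell\ge1}\ell\,\tau_\ell(f)^2$, which collapses to
\[
V_0(f)=w_2\,\tau_1(f)^2+2(w_4-1)\,\tau_2(f)^2+\sum_{\ell=3}^{\infty}2\ell\,\tau_\ell(f)^2=:\sum_{\ell\ge 1}c_\ell\,\tau_\ell(f)^2.
\]
Under the hypotheses $w_2>0$ and $w_4>1$, every weight $c_\ell$ is strictly positive, so $V_0(f)$ is a genuine positive-definite quadratic form on the $\tau$-sequence.

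With the splitting $\lambda^{\ell/2}\tau_\ell(f)=\bigl(\sqrt{c_\ell}\,\tau_\ell(f)\bigr)\cdot\bigl(\lambda^{\ell/2}/\sqrt{c_\ell}\bigr)$, Cauchy--Schwarz yields
\[
\frac{(m_{k_2}(f)-m_{k_1}(f))^2}{V_0(f)}\le (k_2-k_1)^2\Bigl(\tfrac{\lambda}{w_2}+\tfrac{\lambda^2}{2(w_4-1)}+\sum_{\ell\ge3}\tfrac{\lambda^\ell}{2\ell}\Bigr).
\]
Using $\sum_{\ell\ge1}\lambda^\ell/\ell=-\log(1-\lambda)$, the bracketed quantity simplifies to $V_0/4$ with $V_0$ as in \eqref{eq:V_H}. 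A direct inspection of \eqref{eq:m_k} gives $m_{k_2}-m_{k_1}=(k_2-k_1)V_0/2$, so the right-hand side equals $(m_{k_2}-m_{k_1})^2/V_0$, which is exactly \eqref{eq:upper_bound}.

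The equality case of Cauchy--Schwarz forces $\tau_\ell(f)=C\lambda^{\ell/2}/c_\ell$ for every $\ell\ge 1$ and a common constant $C$. To exhibit a function with this profile I would Chebyshev-expand $\varphi_\lambda$ via the classical generating identity $-\log(1-2ty+t^2)=2\sum_{n\ge1}T_n(y)\,t^n/n$ with $y=x/2$ and $t=\sqrt{\lambda}$: the logarithmic piece contributes the generic coefficients $\lambda^{\ell/2}/\ell$, and the low-order polynomial corrections in $\varphi_\lambda$ retune $\tau_1$ and $\tau_2$ by exactly the amount needed for the anomalous weights $c_1=w_2$ and $c_2=2(w_4-1)$ to match, saturating Cauchy--Schwarz with $C=2$. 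Since $\{T_\ell(x/2)\}_{\ell\ge 0}$ is an orthogonal basis for $L^2\bigl([-2,2],\dd x/\sqrt{4-x^2}\bigr)$, two analytic functions whose $\tau_\ell$ agree for all $\ell\ge 1$ differ by an additive constant, and such a constant leaves both sides of \eqref{eq:upper_bound} unchanged; hence $f=C_1\varphi_\lambda+C_2$ as claimed. The main obstacle is really just the combinatorial bookkeeping showing that the non-Gaussian constants $w_2-2$ and $w_4-3$ recombine cleanly into precisely the weights $c_\ell$ against which the correction terms of $\varphi_\lambda$ are calibrated; beyond that the argument is entirely elementary.
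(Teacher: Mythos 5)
Your proposal is correct and follows essentially the same route as the paper, which simply invokes the proof of Theorem 6 of Chung--Lee (2019): rewrite $V_0(f)$ as the positive quadratic form $\sum_{\ell\ge1}c_\ell\tau_\ell(f)^2$ with $c_1=w_2$, $c_2=2(w_4-1)$, $c_\ell=2\ell$ for $\ell\ge3$, apply Cauchy--Schwarz to $m_{k_2}(f)-m_{k_1}(f)=(k_2-k_1)\sum_{\ell\ge1}\lambda^{\ell/2}\tau_\ell(f)$, and identify the extremizer through its Chebyshev coefficients. Note only that saturating equality requires the $x^2$ coefficient of $\varphi_\lambda$ to be $\lambda\bigl(\tfrac{1}{w_4-1}-\tfrac12\bigr)$ (consistent with $L_\lambda$ in \eqref{eq:L_lambda}), so the $\lambda^2$ printed in the statement of Theorem \ref{thm:optimize} appears to be a typo that your computation implicitly corrects.
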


\begin{proof}
	The theorem easily follows from the proof of Theorem 6 in \cite{Chung-Lee2019} with applying Theorem \ref{thm:CLT} instead of Theorem 5 in \cite{Chung-Lee2019}.
\end{proof}

With the entrywise transformation in Section \ref{sec:entrywise}, we have the following changes in Theorems \ref{thm:CLT} and \ref{thm:optimize}.  
\begin{thm} \label{thm:trans_CLT}
	For a rank-$k$ spiked Wigner matrix $M$ with $\lambda \fh < 1$ satisfying Assumption \ref{assump:entry} and for any function $f$ analytic on an open interval containing $[-2, 2]$,
	\[ \begin{split}
	\left( \sum_{i=1}^N f(\wt\mu_i) - N \int_{-2}^2 \frac{\sqrt{4-z^2}}{2\pi} f(z) \, \dd z \right)
	\Rightarrow \caN(\wt m_k(f), \wt V_0(f))\,.
	\end{split} 
	\]
	The mean and the variance of the limiting Gaussian distribution are given by
	\beq \begin{split} \label{eq:mean_tM}
		\wt m_k(f) &= \frac{1}{4} \left( f(2) + f(-2) \right) -\frac{1}{2} \tau_0(f) + k \sqrt{\lambda\fh_d} \tau_1 (f) 
		+ (w_2 -2 + k \lambda\gh) \tau_2(f)\\
		&\qquad + (\wt{w_4}-3) \tau_4(f) + k \sum_{\ell=3}^{\infty} \sqrt{(\lambda\fh)^\ell} \tau_{\ell}(f),
	\end{split} 
	\eeq
	\[	
	V_{\tM}(f) = V_M(f) = (w_2-2) \tau_1(f)^2 + 2(\wt{w_4}-3) \tau_2(f)^2 + 2\sum_{\ell=1}^{\infty} \ell \tau_{\ell}(f)^2.
	\]
\end{thm}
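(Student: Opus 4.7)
The plan is to reduce Theorem \ref{thm:trans_CLT} to Theorem \ref{thm:CLT} by recognizing $\tM$ as an approximately rank-$k$ spiked Wigner matrix with rescaled SNR $\lambda\fh$ and transformed noise parameters, then tracking the genuinely nonlinear corrections that modify the coefficients of $\tau_1(f)$ and $\tau_2(f)$ in the mean. This is the natural rank-$k$ analogue of Theorem 7 of \cite{Chung-Lee2019}, so I will follow that strategy and pinpoint where the higher-rank structure changes the computation.

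First, I would Taylor expand the entrywise transformation. Writing $M_{ij} = H_{ij} + \sqrt{\lambda}(XX^T)_{ij}$ and using that $|(XX^T)_{ij}| \le k\|\bsx\|_\infty^2 \ll N^{-3/4}$ by Assumption \ref{assump:entry}, I expand
\[
\tM_{ij} = \frac{1}{\sqrt{\fh N}}\,h\!\bigl(\sqrt{N}H_{ij}+\sqrt{\lambda N}(XX^T)_{ij}\bigr) = \tM^{(0)}_{ij} + \sqrt{\tfrac{\lambda}{\fh}}(XX^T)_{ij}\,h'(\sqrt{N}H_{ij}) + \tfrac{\lambda\sqrt{N}}{2\sqrt{\fh}}(XX^T)_{ij}^2\,h''(\sqrt{N}H_{ij}) + \cdots,
\]
where $\tM^{(0)}_{ij} = h(\sqrt{N}H_{ij})/\sqrt{\fh N}$; the diagonal admits an analogous expansion through $h_d$. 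Using $\E[h'(\sqrt{N}H_{ij})] = \fh$ and $\E[h_d'(\sqrt{N/w_2}\,H_{ii})\sqrt{N/w_2}] = \fh_d/\sqrt{w_2}$, the leading order reads $\tM = \sqrt{\lambda\fh}\,XX^T + \tM^{(0)} + E$, where $E$ collects both the centered fluctuations of $h'(\sqrt{N}H_{ij})-\fh$ against the spike and the higher-order Taylor remainders.

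Next, I verify that $\tM^{(0)}$ is itself a Wigner matrix in the sense of Definition \ref{def:Wigner}: the normalizations are chosen precisely so that $N\,\E[(\tM^{(0)}_{ii})^2] = w_2$, $N\,\E[(\tM^{(0)}_{ij})^2] = 1$, $N^2\,\E[(\tM^{(0)}_{ij})^4] = \wt w_4$, and $\E[(\tM^{(0)}_{ij})^3] = 0$ by symmetry of $g$. Consequently, $\sqrt{\lambda\fh}\,XX^T + \tM^{(0)}$ is a rank-$k$ spiked Wigner matrix with SNR $\lambda\fh$ and parameters $(w_2,0,\wt w_4)$, to which Theorem \ref{thm:CLT} applies verbatim. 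That already delivers the claimed variance $\wt V_0(f)$ and a provisional mean in which the $\tau_\ell$ coefficients for $\ell \ge 3$ agree with \eqref{eq:mean_tM} and the $\tau_0, \tau_4$ coefficients also match.

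What remains are the mean shifts produced by the correction $E$, and this is the main obstacle. The contribution from the diagonal expansion (the factor $h_d'$ evaluated near its mean $\fh_d$) replaces $k\sqrt{\lambda\fh}\,\tau_1(f)$ by $k\sqrt{\lambda\fh_d}\,\tau_1(f)$, because the $\tau_1$ coefficient in the CLT tracks the diagonal signal strength while the off-diagonal still carries $\fh$. The second-order Taylor term $\tfrac{\lambda\sqrt{N}}{2\sqrt{\fh}}(XX^T)_{ij}^2 h''(\sqrt{N}H_{ij})$ has deterministic mean proportional to $\int h''g\,\dd w$; integration by parts twice against the identity $h=-g'/g$ together with the definition $\gh = \frac{1}{2\fh}\int (g')^2g''/g^2\,\dd w$ converts $\lambda\fh$ in the $\tau_2(f)$ slot to $\lambda\gh$, producing the $k\lambda\gh\,\tau_2(f)$ correction in \eqref{eq:mean_tM}. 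The rank-$k$ combinatorics enter linearly: because cross-terms $\bsx(m)\bsx(n)$ between distinct spike directions enter only at a relative $N^{-1/2}$ cost once $\|\bsx(\cdot)\|_\infty\le N^{-c}$ with $c>3/8$, the $k$ spikes contribute independently and the mean shifts add. Verifying that the fluctuation part of $E$ does not alter the CLT variance (it is absorbed into the same Chebyshev expansion that underlies Theorem \ref{thm:CLT}) and that remainder terms beyond second order are $o(1)$ completes the proof.
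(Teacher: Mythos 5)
Your overall strategy --- peel off the effective spike $\sqrt{\lambda\fh}\,XX^T$, recognize the pure-noise part $\tM^{(0)}$ as a Wigner matrix with parameters $(w_2,0,\wt w_4)$, and then track how the residual $E$ shifts the mean --- is the right shape of argument and is broadly parallel to the paper's reduction. However, there is a concrete error in the step that is supposed to produce the $k\lambda\gh\,\tau_2(f)$ correction. You attribute it to the deterministic mean of the second-order Taylor term $\tfrac{\lambda\sqrt{N}}{2\sqrt{\fh}}(XX^T)_{ij}^2\,h''(\sqrt{N}H_{ij})$, ``proportional to $\int h''g\,\dd w$.'' But Assumption \ref{assump:entry} requires $g$ to be symmetric, and then $\int h'' g\,\dd w = -\int h' g'\,\dd w = \int \frac{g'g''}{g}\,\dd w = 0$ because the integrand is odd. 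So this term has vanishing mean and cannot be the source of the $\gh$ correction. The true mechanism is a perturbation of the \emph{second moment}: one computes $\E[\tM_{ij}^2]-(\E[\tM_{ij}])^2=\tfrac1N+\lambda(\gh-\fh)(XX^T)_{ij}^2+\cdots$, where the extra term comes from $\E[(h')^2]+\E[hh'']-\fh^2=(\gh-\fh)\fh$, i.e.\ from the variance of $h'(\sqrt N H_{ij})-\fh$ \emph{and} its covariance with $h(\sqrt N H_{ij})$. This is a variance-profile deformation, not a mean shift of the entries.

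This matters because your plan of applying Theorem \ref{thm:CLT} to $\sqrt{\lambda\fh}XX^T+\tM^{(0)}$ and then adding the effect of $E$ cannot be carried out additively: $E_{ij}$ and $\tM^{(0)}_{ij}$ are built from the same $H_{ij}$ and are strongly correlated entrywise, and $\sum_{ij}E_{ij}^2=O(1)$, so $E$ shifts the LSS at order one precisely through these correlations. The paper avoids this by a different decomposition: it centers and rescales $\tM$ to a standard Wigner matrix $W$, interpolates $W(\theta)=(1-\theta)W+\theta(\tM-\E[\tM])$ through general Wigner-type matrices governed by a quadratic vector equation, and uses the anisotropic local law to show $\Tr R^W(1,z)-\Tr R^W(0,z)=k\lambda(\gh-\fh)s'(z)s(z)+o(1)$; the deterministic part $\E[\tM]=\sqrt{\lambda\fh}XX^T+\diag(d_i)$ then supplies the spike term and the $\sqrt{\fh_d}-\sqrt{\fh}$ diagonal correction (your account of the $\tau_1$ coefficient is essentially right). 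To repair your proof you would need to replace the ``mean of $h''$'' argument by this variance-profile analysis, or supply an equivalent computation of the joint second-moment structure of $(\tM^{(0)},E)$ and show it contributes exactly $k\lambda(\gh-\fh)\tau_2(f)$ to the mean and nothing to the variance; as written, that step is both misattributed and unproved.
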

We will also prove Theorem \ref{thm:trans_CLT} in Appendix \ref{app:CLT}.
\begin{thm} \label{thm:trans_optimize}
	Assume the conditions in Theorem \ref{thm:trans_CLT}. If $w_2 > 0$ and $\wt{w_4} > 1$, then
	\beq \label{eq:trans_upper_bound}
	\left| \frac{\wt m_{k_2}(f) - \wt m_{k_1}(f)}{\sqrt{\wt V_0(f)}} \right| \leq \left|\frac{ \wt m_{k_2} - \wt m_{k_1}}{ \sqrt{\wt V_0}}\right|.
	\eeq
	Here, the equality holds if and only if $f(x) = C_1 \wt\varphi_{\lambda}(x) + C_2$ for some constants $C_1$ and $C_2$ where
	\[ \begin{split}
	\wt\varphi_{\lambda}(x) := \log \left( \frac{1}{1-\sqrt{\lambda\fh} x + \lambda\fh} \right) 
	+ \sqrt{\lambda} \left( \frac{2\sqrt{\fh_d}}{w_2} - \sqrt{\fh} \right) x + \lambda \left( \frac{\gh}{\wt{w_4}-1} - \frac{\fh}{2} \right) x^2.
	\end{split} 
	\]
\end{thm}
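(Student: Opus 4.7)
The plan is to mirror the proof of Theorem \ref{thm:optimize}, which reduces the optimization to a Cauchy--Schwarz inequality in the space of Chebyshev coefficients $\{\tau_\ell(f)\}$, but with the modified means and variance provided by Theorem \ref{thm:trans_CLT}. First I note that all summands of $\wt m_k(f)$ that do not depend on $k$ cancel in the difference, leaving
\[
\wt m_{k_2}(f) - \wt m_{k_1}(f) = (k_2 - k_1)\left[\sqrt{\lambda \fh_d}\,\tau_1(f) + \lambda \gh\,\tau_2(f) + \sum_{\ell=3}^{\infty}(\lambda \fh)^{\ell/2}\,\tau_\ell(f)\right].
\]
Meanwhile, absorbing the $\ell = 1,2$ contributions of $2\sum_{\ell\geq 1}\ell\,\tau_\ell(f)^2$ into the first two explicit terms of $\wt V_0(f)$ gives
\[
\wt V_0(f) = w_2\,\tau_1(f)^2 + 2(\wt{w_4}-1)\,\tau_2(f)^2 + 2\sum_{\ell=3}^\infty \ell\,\tau_\ell(f)^2,
\]
so each weight is strictly positive under the standing assumptions $w_2>0$ and $\wt{w_4}>1$.

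Next I apply Cauchy--Schwarz to the pairing suggested by these weights, grouping $\sqrt{w_2}\,\tau_1(f)$, $\sqrt{2(\wt{w_4}-1)}\,\tau_2(f)$, and $\sqrt{2\ell}\,\tau_\ell(f)$ against the corresponding coefficients of the numerator. This yields
\[
\frac{(\wt m_{k_2}(f) - \wt m_{k_1}(f))^2}{(k_2 - k_1)^2 \,\wt V_0(f)} \leq \frac{\lambda \fh_d}{w_2} + \frac{\lambda^2 (\gh)^2}{2(\wt{w_4}-1)} + \sum_{\ell=3}^{\infty}\frac{(\lambda \fh)^\ell}{2\ell},
\]
and summing the tail via $-\frac{1}{2}\log(1-x) = \sum_{\ell \geq 1}x^\ell/(2\ell)$ rewrites the right-hand side as
\[
\frac{1}{2}\left[-\log(1-\lambda \fh) + \left(\frac{2 \fh_d}{w_2} - \fh\right)\lambda + \left(\frac{(\gh)^2}{\wt{w_4}-1} - \frac{(\fh)^2}{2}\right)\lambda^2\right],
\]
which by Theorem \ref{thm:trans_main} is exactly $(\wt m_{k_2} - \wt m_{k_1})^2/[(k_2-k_1)^2\,\wt V_0]$. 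This proves the inequality \eqref{eq:trans_upper_bound}.

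For the equality case, Cauchy--Schwarz is saturated iff there exists $C_1 \in \R$ such that
\[
\tau_1(f) = C_1\,\frac{\sqrt{\lambda \fh_d}}{w_2},\quad \tau_2(f) = C_1\,\frac{\lambda \gh}{2(\wt{w_4}-1)},\quad \tau_\ell(f) = C_1\,\frac{(\lambda \fh)^{\ell/2}}{2\ell}\;\;(\ell \geq 3),
\]
while $\tau_0(f)$ is unconstrained. I then verify that $\wt\varphi_\lambda$ realizes these coefficients with $C_1 = 2$: the generating identity $-\log(1 - sx + s^2) = \sum_{\ell \geq 1}\frac{2s^\ell}{\ell}T_\ell(x/2)$ at $s = \sqrt{\lambda \fh}$ produces $\tau_\ell = (\lambda \fh)^{\ell/2}/\ell$ for every $\ell \geq 1$, and the linear and quadratic terms of $\wt\varphi_\lambda$ are tuned precisely to overwrite the $\ell = 1$ and $\ell = 2$ coefficients so that $\tau_1 \mapsto 2\sqrt{\lambda \fh_d}/w_2$ and $\tau_2 \mapsto \lambda \gh/(\wt{w_4}-1)$; an additive constant $C_2$ alters only $\tau_0$ and so preserves equality. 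The only point where this argument is harder than the proof of Theorem \ref{thm:optimize} is the heterogeneity of the transformed coefficients --- $\fh_d$ appears only at $\ell = 1$, $\gh$ only at $\ell = 2$, and $\sqrt{\fh}$ together with $\wt{w_4}$ governs the remaining modes --- so the main obstacle is bookkeeping to confirm that the linear and quadratic corrections built into $\wt\varphi_\lambda$ reconcile these three different scalings simultaneously.
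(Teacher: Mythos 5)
Your proof is correct and follows the same route the paper relies on: the paper gives no explicit proof of this theorem (and for Theorem \ref{thm:optimize} it simply defers to the weighted Cauchy--Schwarz argument of Theorem 6 in Chung--Lee 2019), and your regrouping of $\wt V_0(f)$ into positive weights $w_2$, $2(\wt w_4-1)$, $2\ell$, the Cauchy--Schwarz pairing, the resummation of $\sum_{\ell\ge 3}(\lambda\fh)^\ell/(2\ell)$, and the verification that $\wt\varphi_\lambda$ saturates the equality case with $C_1=2$ all check out against the means and variance of Theorems \ref{thm:trans_main} and \ref{thm:trans_CLT}. The only implicit step is that vanishing Chebyshev coefficients $\tau_\ell(f)=0$ for all $\ell\ge 1$ force an analytic $f$ to be constant, which is standard and harmless.
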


\section{Conclusion and Future Works} \label{sec:summary}

In this paper, we considered the weak detection of the spiked Wigner model with general ranks. We proposed a hypothesis test based on the central limit theorem for the linear spectral statistics of the data matrix and introduced a test for rank estimation that do not require any prior information on the rank of the signal. It was shown that the error of the proposed hypothesis test matches the error of the likelihood ratio test in case the noise is Gaussian and the signal-to-noise ratio is small. With the knowledge on the density of the noise, the test was further improved by applying an entrywise transformation.

We believe it is possible to consider the detection problem in the spiked rectangular model, where the data matrix is not necessarily symmetric nor even square. We believe that the hypothesis test proposed in this paper can be extended to the spiked rectangular model, where we may form sample covariance matrices (Gram matrices) and apply the central limit theorem for the linear spectral statistics. This will be discussed in our future works.

\subsubsection*{Acknowledgments} 
The work of J. H. Jung and J. O. Lee was partially supported by National Research Foundation of Korea under grant number NRF-2019R1A5A1028324. The work of H. W. Chung was partially supported by National Research Foundation of Korea under grant number 2017R1E1A1A01076340 and by the Ministry of Science and ICT, Korea, under an ITRC Program, IITP-2019-2018-0-01402.

\bibliographystyle{abbrv}
\bibliography{references}

\appendix

\section{Examples and Simulations} \label{sec:ex}

In Appendix \ref{sec:ex}, we numerically check the errors of the proposed tests in Algorithms \ref{alg:ht} and \ref{alg:htet} and the test for rank estimation in Algorithm \ref{alg:at} under various settings.

\subsection{Spiked Gaussian Wigner model} \label{subsec:GOE}

We consider the simplest case of the spiked Gaussian Wigner model where $w_2 = 2$ (i.e., $H$ is a GOE matrix) and the signal $\bsx(m)  = (x_1(m), x_2(m), \dots, x_N(m))$ where $\sqrt{N} x_i(m)$'s are i.i.d. Rademacher random variable. Note that the parameters $w_2 = 2$ and $w_4 = 3$. 

In the numerical simulation done in Matlab, we generated 10,000 independent samples of the $256 \times 256$ data matrix $M$, where we fix $k_1=1$ (under $\bsH_1$) and vary $k_2$ from $2$ to $5$ (under $\bsH_2$), with the SNR $\lambda$ varying from $0$ to $0.7$. To apply Algorithm \ref{alg:ht}, we compute
\beq \begin{split}
	L_{\lambda} = -\log \det \big( (1+\lambda)I - \sqrt{\lambda} M \big) + \frac{\lambda N}{2}.
\end{split} 
\eeq
We accept $\bsH_1$ if 
\[
L_{\lambda} \leq \frac{m_{k_1}+m_{k_2}}{2} = -\frac{k_2 + 2}{2} \log(1-\lambda)
\]
and accept $\bsH_2$ otherwise. The (theoretical) limiting error of the test is
\beq \label{eq:limit_error_1a}
\erfc \left( \frac{k_2 - 1}{4} \sqrt{-\log (1-\lambda)} \right).
\eeq

In Figure \ref{fig:LSS_GOE}, we compare the error from the numerical simulation and the theoretical error of the proposed algorithm, which show that the numerical errors of the test closely match the theoretical errors. 

\begin{figure}[h]
	\vskip 0.2in
	\begin{center}
		\centerline{\includegraphics[width=250pt]{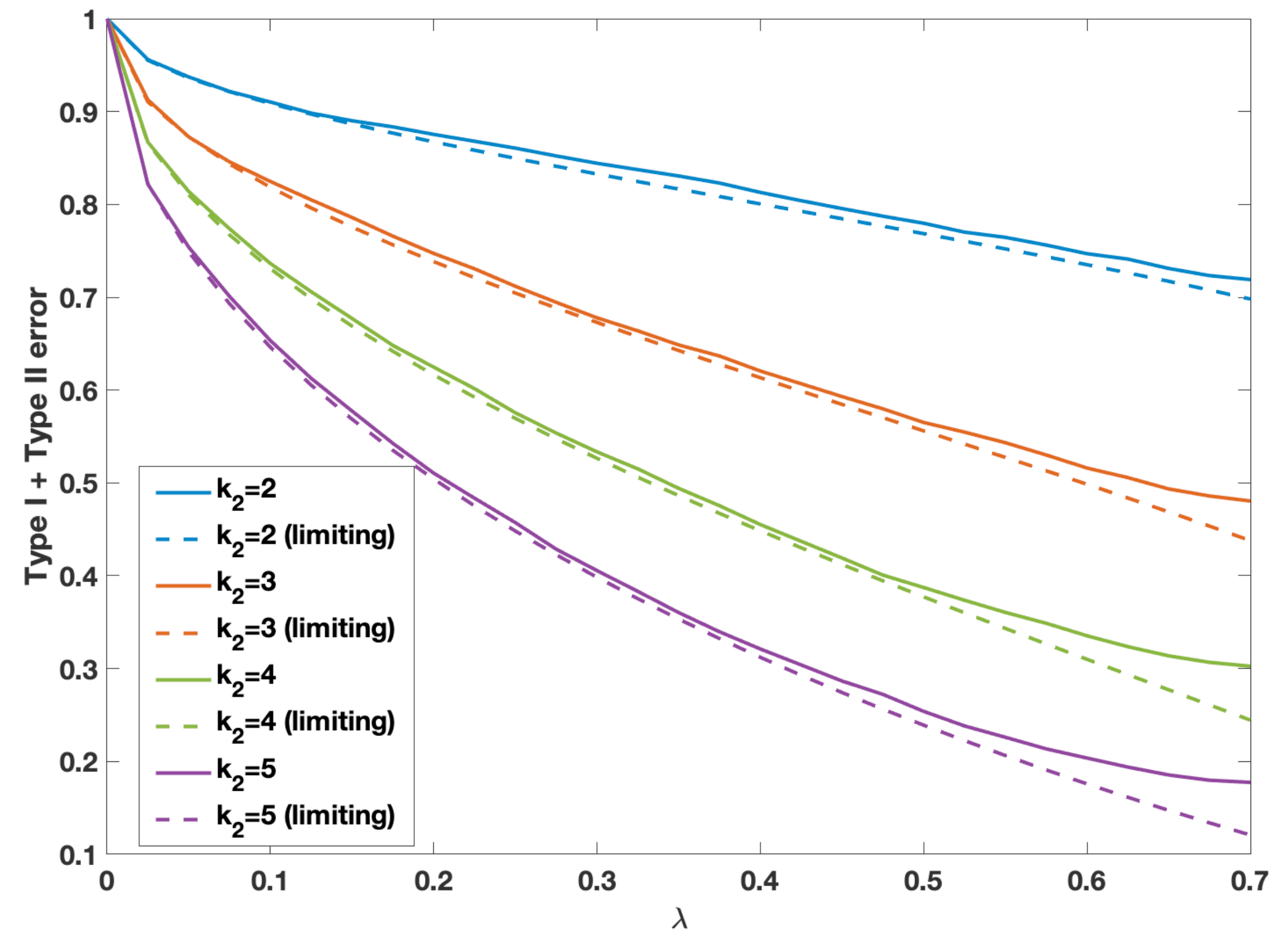}}
		\caption{The errors from the simulation with Algorithm \ref{alg:ht} (solid) versus the limiting errors \eqref{eq:limit_error_1a} (dashed) for the setting in Section \ref{subsec:GOE} with $k_2 = 2, 3, 4, 5$.}
		\label{fig:LSS_GOE}
	\end{center}
	\vskip -0.2in
\end{figure}

\subsection{Spiked Wigner model} \label{subsec:sech}

We next consider a spiked Wigner model with non-Gaussian noise, where the density function of the noise matrix is given by
\[
g(x) = g_d(x) = \frac{1}{2 \cosh (\pi x/2)} = \frac{1}{e^{\pi x/2} + e^{-\pi x/2}}.
\]
We sample $W_{ij} = W_{ji}$ from the density $g$ and let $H_{ij} = W_{ij}/\sqrt{N}$. We again let the signal $\bsx(m)  = (x_1(m), x_2(m), \dots, x_N(m))$ where $\sqrt{N} x_i(m)$'s are i.i.d. Rademacher random variable. Note that the parameters $w_2 = 1$ and $w_4 = 5$. We again perform the numerical simulation 10,000 samples of the $256 \times 256$ data matrix $M$ with the SNR $\lambda$ varying from $0$ to $0.6$, where we fix $k_1=1$ (under $\bsH_1$) and $k_2=3$ (under $\bsH_2$).

In Algorithm \ref{alg:ht}, we compute
\beq \begin{split}
	L_{\lambda} = -\log \det \big( (1+\lambda)I - \sqrt{\lambda} M \big) + \frac{\lambda N}{2} 
	+ \sqrt{\lambda} \Tr M - \frac{\lambda}{4} (\Tr M^2 - N).
\end{split} 
\eeq
We accept $\bsH_1$ if 
\[
L_{\lambda} \leq \frac{m_{k_1}+m_{k_2}}{2} = -\frac{k_2 + 2}{2} \log(1-\lambda) + \frac{k_2 \lambda}{2} - \frac{(k_2 -3) \lambda^2}{8}
\]
and accept $\bsH_2$ otherwise. The (theoretical) limiting error of the test is
\beq \label{eq:limit_error_1}
\erfc \left( \frac{k_2 - 1}{4} \sqrt{-\log (1-\lambda)  + \lambda - \frac{\lambda^2}{4}} \right).
\eeq

We can further improve the test by introducing the entrywise transformation given by
\[
h(x) = -\frac{g'(x)}{g(x)} = \frac{\pi}{2} \tanh \frac{\pi x}{2}.
\]
The Fisher information $\fh = \frac{\pi^2}{8}$, which is larger than $1$. We thus construct a transformed matrix $\wt M$ by
\[
\wt M_{ij} = \frac{2\sqrt{2}}{\pi \sqrt{N}} h(\sqrt{N} M_{ij}) = \sqrt{\frac{2}{N}} \tanh \left( \frac{\pi \sqrt{N}}{2} M_{ij} \right).
\]
If $\lambda > \frac{1}{\fh} = \frac{8}{\pi^2}$, we can apply PCA for strong detection of the signal. If $\lambda < \frac{8}{\pi^2}$, applying Algorithm \ref{alg:htet}, we compute
\[ \begin{split}
\wt L_{\lambda} = -\log \det \left( \left(1+\frac{\pi^2 \lambda}{8}\right)I - \sqrt{\frac{\pi^2 \lambda}{8}} \tM \right) + \frac{\pi^2 \lambda N}{16} 
+ \frac{\pi \sqrt{\lambda}}{2 \sqrt{2}} \Tr \tM + \frac{\pi^2 \lambda}{16} (\Tr \tM^2 - N).
\end{split} 
\]
(Here, $\fh = \fh_d = \frac{\pi^2}{8}$, $\gh = \frac{\pi^2}{16}$, and $\wt w_4 = \frac{3}{2}$.) We accept $\bsH_1$ if 
\[
\wt L_{\lambda} \leq -\frac{k_2 +2}{2} \log \left(1-\frac{\pi^2 \lambda}{8} \right) + \frac{k_2 \pi^2 \lambda}{16} - \frac{3 \pi^4 \lambda^2}{512}
\]
and accept $\bsH_2$ otherwise. The limiting error with entrywise transformation is
\beq \label{eq:limit_error_2}
\erfc \left( \frac{k_2 - 1}{4} \sqrt{ -\log \left(1-\frac{\pi^2 \lambda}{8}\right) + \frac{\pi^2 \lambda}{8}} \right).
\eeq
Since $\erfc(\cdot)$ is a decreasing function and $\frac{\pi^2}{8} > 1$, it is immediate to see that the limiting error in \eqref{eq:limit_error_2} is strictly smaller than the limiting error in \eqref{eq:limit_error_1}.

In Figure \ref{fig:alg12}, we plot the result of the simulation with $k_2=3$, which shows that the numerical error from Algorithm \ref{alg:htet} is smaller than that of Algorithm \ref{alg:ht}; both errors closely match theoretical errors in \eqref{eq:limit_error_2} and \eqref{eq:limit_error_1}.

\begin{figure}[h!]
	\vskip 0.2in
	\begin{center}
		\centerline{\includegraphics[width=250pt]{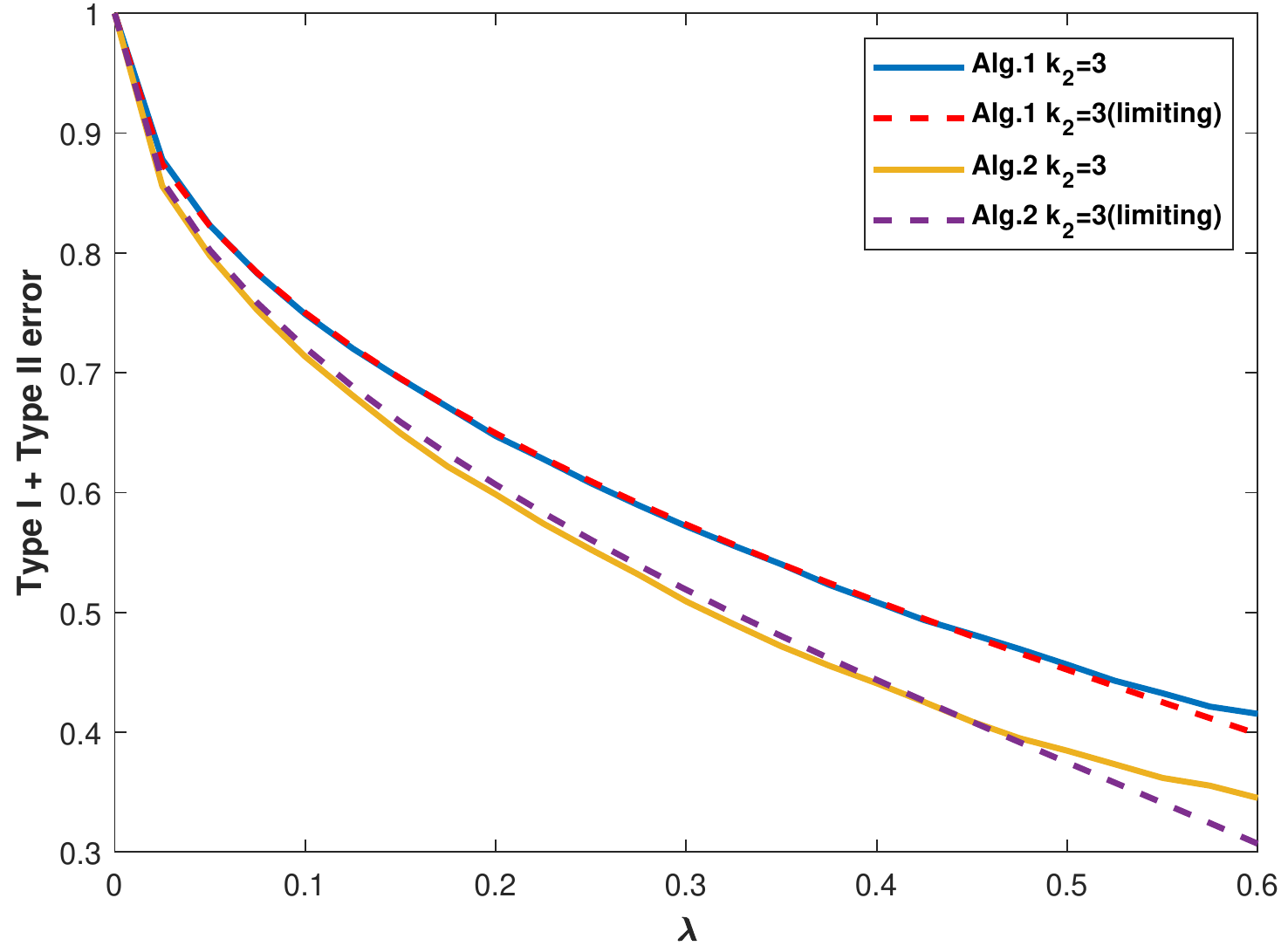}}
		\caption{The errors from the simulation with Algorithm \ref{alg:ht} (blue) and with Algorithm \ref{alg:htet} (yellow), respectively, versus the limiting errors \eqref{eq:limit_error_1} of Algorithm \ref{alg:ht} (red) and \eqref{eq:limit_error_2} of Algorithm \ref{alg:htet} (purple), respectively, for the setting in Section \ref{subsec:sech}.}
		\label{fig:alg12}
	\end{center}
	\vskip -0.2in
\end{figure}

\subsection{Rank Estimation} \label{subsec:rank}

We again consider the example in Section \ref{subsec:GOE} and apply Algorithm \ref{alg:at} to estimate the rank of the signal. We again perform the numerical simulation 20,000 samples of the $256 \times 256$ data matrix $M$ with the SNR $\lambda$ varying $0.025$ to $0.6$ and choose the rank of the signal $k$ uniformly from $0$ to $4$. Since we know that the range of the rank $k$ is $[0, 4]$, the (theoretical) limiting error in \eqref{eq:rank_k_error} changes to
\[ \begin{split}
& \p(k=0) \cdot \p \left( Z > \frac{\sqrt{V_0}}{4} \right) + \sum_{i=1}^{3} \p(k=i) \cdot \p \left( |Z| > \frac{\sqrt{V_0}}{4} \right) + \p(k=4) \cdot \p \left( Z > \frac{\sqrt{V_0}}{4} \right) \\
&= \left( 1 - \frac{\p(k=0)+\p(k=4)}{2} \right) \cdot\erfc \left( \frac{1}{4} \sqrt{-\log (1-\lambda) + \left( \frac{2}{w_2} - 1 \right) \lambda + \left( \frac{1}{w_4-1} - \frac{1}{2} \right) \lambda^2} \right).
\end{split} \]

We compute the same test statistic
\beq \begin{split}
	L_{\lambda} = -\log \det \big( (1+\lambda)I - \sqrt{\lambda} M \big) + \frac{\lambda N}{2}
\end{split} \eeq
and find the nearest nonnegative integer of the value
\beq
-\frac{L_{\lambda}}{\log (1-\lambda)} - \frac{1}{2},
\eeq
rounding half down. 
Since $\p(k=0)=\p(k=4) = 0.2$, the limiting error of the estimation is
\beq \label{eq:limit_rank}
\left(1-\frac{\p(k=0)+\p(k=4)}{2}\right) \cdot \erfc \left( \frac{1}{4} \sqrt{-\log (1-\lambda)}\right)=	0.8 \cdot \erfc \left( \frac{1}{4} \sqrt{-\log (1-\lambda)}\right).
\eeq

The result of the simulation can be found in Figure \ref{fig:alg_at}, where we compare the error from the estimation (Algorithm \ref{alg:at}) and the theoretical error in \eqref{eq:limit_rank}.

\begin{figure}[h]
	\vskip 0.2in
	\begin{center}
		\centerline{\includegraphics[width=250pt]{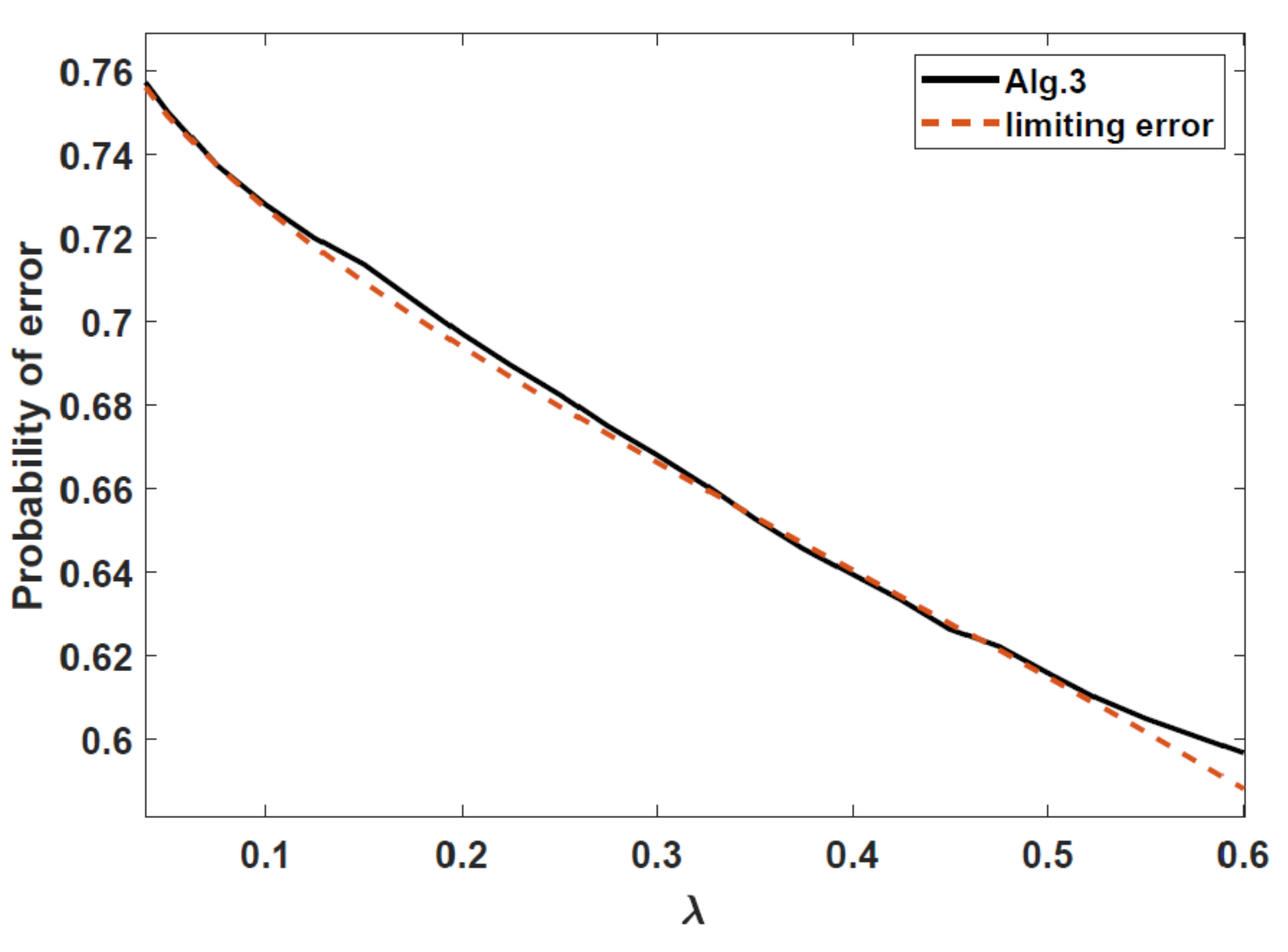}}
		\caption{The errors from the simulation with Algorithm \ref{alg:at} (solid) versus the limiting error \eqref{eq:limit_rank} (dashed) for the setting in Section \ref{subsec:rank}.}
		\label{fig:alg_at}
	\end{center}
	\vskip -0.2in
\end{figure}

\section{Analysis of the log likelihood ratio of spiked Gaussian Wigner matrices}\label{app:thm}

In Appendix \ref{app:thm}, we provide the detailed proof of Theorem \ref{thm:main}. 
We first briefly outline the proof for the case $k_1=0$, where the analysis is essentially the same as the rank-$1$ case in \cite{AlaouiJordan2018}. We then explain how we can generalize our proof to the nontrivial case $k_1\neq0$. 
We also remark that in case $k_1=0$ our analysis with appropriate changes can be applied to the case where the elements $x^{\ast}_i(\ell)$ of the given spike matrix have bounded supports and are independent with prior distributions $\caP_\ell$.

\begin{nrem}
	We use the standard big-O and little-o notation: $a_N = O(b_N)$ implies that there exists $N_0$ such that $a_N \le C b_N$ for some constant $C>0$ independent of $N$ for all $N \ge N_0$; $a_N = o(b_N)$ implies that for any positive constant $\varepsilon$ there exists $N_0$ such that $a_N \le \varepsilon b_N$ for all $N \ge N_0$.
	
	For $X$ and $Y$, which can be deterministic numbers and/or random variables depending on $N$, we use the notation $X = \caO(Y)$ if for any (small) $\varepsilon > 0$ and (large) $D > 0$ there exists $N_0 \equiv N_0 (\varepsilon, D)$ such that $\p(|X|>N^{\varepsilon} |Y|) < N^{-D}$ whenever $N > N_0$.
	
	For an event $\Omega$, we say that $\Omega$ holds with high probability if for any (large) $D > 0$ there exists $N_0 \equiv N_0 (D)$ such that $\p(\Omega^c) < N^{-D}$ whenever $N > N_0$.
	
	For a sequence of random variables, the notation $\Rightarrow$ denotes the convergence in distribution as $N\rightarrow\infty.$ 
\end{nrem}

\subsection{Proof of Theorem \ref{thm:main}}\label{app:thm1}

We first consider the case $k_1=0$ and $k_2=k$. The general case will be considered in Appendix \ref{app:extension}.  

Let $\caP_0 :=\caP^{\otimes k}$. 
By Bayes' rule, 
\[
\dd \bbP_2(X|Y)=\frac{e^{-H^k(X)}\dd\caP_0^{\otimes N}(X)}{\int e^{-H^k(X)}\dd\caP_0^{\otimes N}(X)}.
\]
For a positive integer $n$ and a function $f:(\bbR^{N\times k})^{n+1}\mapsto\bbR$, the Gibbs average of $f$ with respect to $H$ is defined as
\beq\label{gibbs_average_one}
\left<f(X^{(1)},\ldots,X^{(n)},X^*)\right>:=\frac{\int f(X^{(1)},\ldots,X^{(n)},X^*)\prod_{\ell=1}^{n}e^{-H^k(X^{(\ell)})}\dd\caP_0^{\otimes N}(X^{(\ell)})}{\left(\int e^{-H^k(X)}\dd\caP_0^{\otimes N}(X)\right)^{n}}.
\eeq
The variables $X^{(\ell)}, \ell=1\ldots,n$ are oftentimes called \emph{replicas}, which are random samples independently drawn from the posterior. Following \cite{AlaouiJordan2018}, we let
\beq
R_{\ell,\ell^\prime}(m,s):=\bsx^{(\ell)}(m)\cdot\bsx^{(\ell^\prime)}(s)=\frac1{N}\displaystyle\sum\limits_{i=1}^{N}x_i^{(\ell)}(m)x_i^{(\ell^\prime)}(s)
\eeq
for $\ell,\ell^\prime=1,\ldots,n,*$ and $m,s=1,2,\ldots,k$. 
The overlap for the rank-$k$ model is a $k\times k$ matrix
\beq\label{rank-k_overlap}
R^k_{1,*}=\frac{1}{N}X^{(1)T}X^{*}=\frac{1}{N}\sum_{i=1}^NX_i^{(1)T}X_i^{* }=[R_{1,*}(m,s)]_{1\le m,s\le k}.
\eeq

We remark that the Nishimori property (\cite{Nishimori2001}) holds for this model; the $(n+1)$-tuples $\left(X^{(1)},\ldots,X^{(n)},X^{(n+1)}\right)$ and $\left(X^{(1)},\ldots,X^{(n)},X^*\right)$ have the same distribution under $\E_{\bbP_2}\left<\cdot\right>$, which is the property in \cite{LelargeMiolane2016}. In particular, under $\bbP_2$, the distribution of the overlap $R^k_{1,*}$ between a replica and the spike is equal to that of the overlap $R^k_{1,2}$ between two replicas.

Recall that our proof of Theorem \ref{thm:main} is based on the fact that the characteristic function of the log-LR converges to a Gaussian. By differentiating the characteristic function $\phi_N$ defined in \eqref{char_log-LR}, we can readily prove Theorem \ref{thm:main} for the case $k_1=0$ by applying the following proposition that generalizes Proposition 9 of \cite{AlaouiJordan2018}.

\begin{prop}\label{prop:diagest}
	There exists a constant $\lambda_{0}(k,\caP)$ depending only on $k$ and the bound for the prior distribution $\caP$ such that for all $\lambda<\lambda_0$ and $s\in\bbR$,
	\[ \begin{split}
	&\E\left[\left(N\left<(R_{1,*}(\ell,m))^2\right>-\left<x_N(\ell)^2x_N^{*}(m)^2\right>\right)e^{is\log\caL}\right]=\frac{\lambda}{1-\lambda}\E\left[e^{is\log\caL}\right]+O(N^{-\frac{1}{2}}).
	\end{split} \]
\end{prop}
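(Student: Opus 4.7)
The plan is to adapt the rank-$1$ cavity argument underlying Proposition~9 of \cite{AlaouiJordan2018} to an individual entry $(\ell,m)$ of the $k\times k$ overlap matrix $R^k_{1,*}$. The main new ingredient is a \emph{matrix} overlap concentration: I would first establish that, for $\lambda<\lambda_0(k,\caP)$ small enough, each entry $R_{1,*}(\ell',m')$ sits at scale $N^{-1/2}$ under the posterior, in the sense that $\E\langle R_{1,*}(\ell',m')^2\rangle = O(N^{-1})$ and $\E\langle R_{1,*}(\ell',m')^4\rangle = O(N^{-2})$. This is the rank-$k$ analogue of Proposition~7 of \cite{AlaouiJordan2018} and is obtained by applying the Nishimori identity to the Frobenius-squared norm $\langle\|R^k_{1,*}\|_F^2\rangle$ and running the same Gaussian interpolation in $\lambda$ that \cite{AlaouiJordan2018} uses for the scalar overlap, only now with $\|\cdot\|_F$ as the control quantity; the threshold $\lambda_0$ comes from the contraction condition in this interpolation and shrinks with $k$.

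With this concentration in hand, the next step is to expand
\[
N R_{1,*}(\ell,m)^2 = \frac{1}{N}\sum_{i,j=1}^N x_i^{(1)}(\ell)\,x_i^{*}(m)\,x_j^{(1)}(\ell)\,x_j^{*}(m),
\]
split off the diagonal $i=j$, and use row-exchangeability under $\langle\cdot\rangle$ (which follows from the i.i.d. prior and the symmetry of the Hamiltonian) together with the bounded support of $\caP$, to reduce the left-hand side of the proposition to
\[
(N-1)\,\E\!\left[\langle x_1^{(1)}(\ell)\, x_1^{*}(m)\, x_2^{(1)}(\ell)\, x_2^{*}(m)\rangle\, e^{\ii s\log\caL}\right]
\]
plus an $O(N^{-1})$ additive error coming from the replacement of $N^{-1}\sum_i\langle x_i(\ell)^2 x_i^{*}(m)^2\rangle$ by $\langle x_N(\ell)^2 x_N^{*}(m)^2\rangle$. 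Invoking Nishimori turns this four-point function into a second moment of the overlap matrix, $\langle x_1^{(1)}(\ell) x_2^{(1)}(\ell) x_1^{(2)}(m) x_2^{(2)}(m)\rangle$.

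To produce the factor $\lambda/(1-\lambda)$ I would perform Gaussian integration by parts in $W_{12}$ (using $Y_{12}=\sqrt{\lambda/N}\sum_p x_1^{*}(p)x_2^{*}(p)+W_{12}$) to convert the factor $x_1^{*}(m)x_2^{*}(m)$ into a $W_{12}$-derivative of the integrand. Each $W_{12}$-derivative of the Gibbs measure drops a factor $\sqrt{\lambda/N}\sum_{r}\sum_{n=1}^{k} x_1^{(r)}(n)x_2^{(r)}(n)$ (summed over replicas), and the $n=m$ summand reproduces, up to a $\lambda$ prefactor, the same overlap-squared structure. Iterating this step produces the geometric series $\sum_{p\ge 0}\lambda^p = (1-\lambda)^{-1}$, whose $\lambda$ prefactor gives $\lambda/(1-\lambda)\cdot\E[e^{\ii s\log\caL}]$. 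All mixed-index overlap products $R_{1,*}(\ell,n)R_{1,*}(n,m)$ with $n\ne\ell,m$ that appear along the way, together with $W_{12}$-derivatives that land on $e^{\ii s\log\caL}$, are absorbed into $O(N^{-1/2})$ remainders via the concentration established in Step~1 and the analogous concentration of $\log\caL$.

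The main obstacle is the uniform-in-$\lambda$ matrix overlap concentration in the first step and the summability of the remainder in the geometric expansion. In the rank-$1$ case of \cite{AlaouiJordan2018} the overlap is a scalar and the required contraction in the interpolation is immediate; in the rank-$k$ setting one must track all $k^2$ overlap entries simultaneously via the Frobenius norm and show the resulting contraction still produces a nontrivial $\lambda_0(k,\caP)>0$. Once this is secured, each remainder term in the iterated cavity expansion is bounded by $\lambda^p$ times a $k$- and $\caP$-dependent constant, hence summable for $\lambda<\lambda_0$, and every other step reduces to a direct adaptation of the rank-$1$ argument with $k$-dependent constants.
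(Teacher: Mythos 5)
Your overall architecture coincides with the paper's: entrywise moment bounds $\E\langle R_{1,*}(m,s)^2\rangle=O(N^{-1})$ and $\E\langle R_{1,*}(m,s)^4\rangle=O(N^{-2})$ for $\lambda$ below a $k$-dependent threshold (these are exactly Lemma~\ref{lem:second moment} and Proposition~\ref{convergence_fourth_moment_symmetric_paramagnetic}, giving the third-moment bound $\E\langle|R_{1,*}|^3\rangle\le K N^{-3/2}$ by H\"older), followed by symmetrization in the site index and a self-consistent relation that produces $\lambda/(1-\lambda)$. Your Step 2 is precisely the paper's starting identity $\E[(N\langle R_{1,*}(\ell,m)^2\rangle-\langle x_N(\ell)^2x_N^*(m)^2\rangle)e^{\ii s\log\caL}]=N\E[\langle x_N(\ell)x_N^*(m)R^-_{1,*}(\ell,m)\rangle e^{\ii s\log\caL}]$. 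One caveat on Step 1: the paper obtains these bounds by the site-cavity interpolation in $t$ (Talagrand's $\nu_t$, decoupling the $N$-th row), not by an interpolation in $\lambda$; the contraction $\nu(R^2)\le C_1\lambda^2\nu(|R|^3)+\lambda\nu(R^2)+C_2\lambda/N$ of Lemma~\ref{main_bound_for_second} is what yields $\lambda_0(k,\caP)$.

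The genuine gap is in Step 3. Gaussian integration by parts requires an explicit factor of the Gaussian variable in the integrand; the spike product $x_1^*(m)x_2^*(m)$ is independent of $W$ and cannot be ``converted into a $W_{12}$-derivative.'' Moreover, removing or expanding in the single bond $(1,2)$ does not decouple sites $1$ and $2$ (they remain correlated through the other $N-2$ sites), so the base case of your iteration is not an overlap-free source term, and since you carry a prefactor $N-1$ you would need each pair correlation to accuracy $o(N^{-3/2})$ --- more than the moment bounds of Step 1 deliver. The mechanism that works, and that the paper uses, is the site cavity applied once: with $\varphi(t)=N\E[\langle x_N(\ell)x_N^*(m)R^{-}_{1,*}(\ell,m)\rangle_t X(t)]$ one has $\varphi(0)=0$ because the prior is centered, $\varphi'(0)=\lambda N\E[\langle(R^{-}_{1,*}(\ell,m))^2\rangle_0X(0)]+\frac{\lambda\kappa^2}{w_2}\E[\langle R^{-}_{1,*}(\ell,m)\rangle_0X(0)]$ by Gaussian integration by parts on the $W_{iN}$'s (Lemma~\ref{gibbs_derivative}), and $\sup_t|\varphi''(t)|=O(N^{-1/2})$ by the third-moment bound. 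Transporting back to $t=1$ gives the \emph{one-step} relation $(1-\lambda)N\E[\langle R_{1,*}(\ell,m)^2\rangle e^{\ii s\log\caL}]=\E[\langle x_N(\ell)^2x_N^*(m)^2\rangle e^{\ii s\log\caL}]+O(N^{-1/2})$, and a second cavity computation gives $\E[\langle x_N(\ell)^2x_N^*(m)^2\rangle e^{\ii s\log\caL}]=\E[e^{\ii s\log\caL}]+O(N^{-1/2})$; solving this linear system yields $\lambda/(1-\lambda)$ with no infinite resummation. You also omit the diagonal part of the Hamiltonian ($w_2<\infty$) and the resulting term $\frac{\lambda\kappa^2}{w_2}\E\langle R^{-}_{1,*}\rangle$, which must separately be shown to be $O(N^{-1/2})$; finally, no concentration of $\log\caL$ is needed anywhere, only $|e^{\ii s\log\caL}|=1$.
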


In the rest of Appendix \ref{app:thm}, we prove Proposition \ref{prop:diagest}.

\subsection{Preliminary bounds}\label{appe:PR}
As in \cite{AlaouiJordan2018}, we apply the interpolation trick for the proof of Proposition \ref{prop:diagest}. We collect a few results that will be repeatedly used in the proof. In what follows, we use the notation
\[
R^{-}_{\ell,\ell'}(m,s) = \frac{1}{N}\sum_{i=1}^{N-1} x_i^{(\ell)}(m)x_i^{(\ell')}(s), \qquad R^{k-}_{\ell,\ell'}=[R^{-}_{\ell,\ell'}(m,s)]_{1\le m,s\le k}.
\]
For a function $f$ of $n$ replicas $X^{(\ell)}$, $\ell=1,\cdots,n$, we use the notation of Talagrand,
\[
\nu_t(f) := \E\langle f\rangle_{t},
\]
where $\langle \cdot \rangle_t$ is the Gibbs average with respect to the family of (interpolating) Hamiltonians defined by
\[ \begin{split}
-H^k_t(X) 
&:= \sum_{1\le i < j \le N-1} \left( \sqrt{\frac{\lambda }{N}} W_{ij}X_iX_j^T + \frac{\lambda }{N}X_iX_j^TX_i^{*}X_j^{*T} -\frac{\lambda }{2N} (X_iX_j^T)^2 \right) \\
&\quad +\frac1{w_2} \sum_{i=1}^{N-1} \left( \sqrt{\frac{\lambda}{N}}W_{ii}X_iX_i^T+\frac{\lambda}{N}X_iX_i^TX_i^{* }X_i^{*T}-\frac{\lambda}{2N}(X_iX_i^T)^2 \right) \\
&\quad +\sum_{i=1}^{N-1} \left( \sqrt{\frac{\lambda t}{N}} W_{iN}X_i X_N^T + \frac{\lambda t}{N} X_iX_N^TX_i^{*} X_N^{*T} - \frac{\lambda t}{2N} (X_i X_N^T)^2 \right) \\
&\quad +\frac1{w_2}\sqrt{\frac{\lambda t}{N}}W_{NN}X_NX_N^T+\frac1{w_2}\frac{\lambda t}{N}X_NX_N^TX_N^{* }X_N^{*T}-\frac1{w_2}\frac{\lambda t}{2N}(X_NX_N^T)^2
\end{split} \]
for $t \in [0, 1]$. We set $\nu_1 \equiv \nu$. Note that $H_1^k = H$ and at $t=0$ the variable $x_N$ decouples from the other variables. 

We have the following formula for the derivative $\nu^\prime_t(f):= \dd \nu_t(f)/\dd t$. 
\begin{lem}\label{gibbs_derivative}
	Let $f$ be a function of $n$ replicas $X^{(1)},\cdots,X^{(n)}$ and $X^*$. Then
	$$\nu^\prime_t(f)=\sum_{m,s=1}^k\nu_t(f,m,s)$$
	where
	\beq \begin{split} \label{eq:nu_t}
		\nu_t(f,m,s) 
		&= \frac{\lambda }{2} \sum_{1\le \ell\neq \ell' \le n} \nu_t(R^{-}_{\ell,\ell'}(m,s) y^{(\ell)}(m)y^{(\ell')}(s)f) \\
		&\quad- \lambda  n \sum_{\ell=1}^n \nu_t(R^{-}_{\ell,n+1}(m,s) y^{(\ell)}(m)y^{(n+1)}(s)f) 
		\\&\quad + \lambda  \sum_{\ell=1}^n \nu_t(R^{-}_{\ell,*}(m,s) y^{(\ell)}(m)y^{*}(s)f)
		- \lambda  n  \nu_t(R^{-}_{n+1,*}(m,s) y^{(n+1)}(m)y^{*}(s)f) 
		\\&\quad +   \frac{\lambda n(n+1)}{2} \nu_t(R^{-}_{n+1,n+2}(m,s) y^{(n+1)}(m)y^{(n+2)}(s)f)
		\\&\quad +\frac{\lambda }{2w_2 N} \sum_{1\le \ell\neq \ell' \le n} \nu_t( y^{(\ell)}(m)^2y^{(\ell')}(s)^2f)
		- \frac{\lambda n}{2w_2 N} \sum_{\ell=1}^n \nu_t(y^{(\ell)}(m)^2y^{(n+1)}(s)^2f) 
		\\&\quad + \frac{\lambda}{w_2 N}  \sum_{\ell=1}^n \nu_t(y^{(\ell)}(m)^2y^{*}(s)^2f)
		- \frac{\lambda n}{w_2 N}  \nu_t(y^{(n+1)}(m)^2y^{*}(s)^2f) 
		\\&\quad + \frac{\lambda n(n+1)}{2w_2 N} \nu_t(y^{(n+1)}(m)^2y^{(n+2)}(s)^2f),
	\end{split} \eeq
	with $y := x_N$.
\end{lem}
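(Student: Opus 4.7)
The argument is the standard cavity and Gaussian integration by parts calculation for spin-glass-type Hamiltonians, specialized here to a rank-$k$ spike; it introduces one fresh replica $X^{(n+1)}$ from the cavity derivative in $t$ and a second one $X^{(n+2)}$ from the Gaussian IBP in the $-n\langle\cdot\rangle_t$ sector. Starting from $\langle f\rangle_t = Z_t^{-n}\int f\prod_{\ell=1}^n e^{-H^k_t(X^{(\ell)})}\,d\caP_0^{\otimes N}(X^{(\ell)})$, differentiating in $t$ and taking $\bbE$ give
\[
\nu'_t(f)=\sum_{\ell=1}^n\nu_t\!\bigl(f\,(-\partial_t H^k_t(X^{(\ell)}))\bigr) - n\,\nu_t\!\bigl(f\,(-\partial_t H^k_t(X^{(n+1)}))\bigr),
\]
where the second average is taken over $n+1$ replicas.

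\textbf{Structure of $-\partial_t H^k_t$ and direct terms.} Only the $N$-th row of $X$ is $t$-dependent, so $-\partial_t H^k_t(X)$ splits into an off-diagonal sum over $1\le i\le N-1$ and a diagonal term at $i=N$ with extra prefactor $1/w_2$. Each piece contains a Wigner-linear term $\tfrac{1}{2\sqrt t}\sqrt{\lambda/N}\,W_{iN}X_iX_N^T$, a spike-coupling term $(\lambda/N)X_iX_N^TX_i^*X_N^{*T}$, and a self-interaction $-(\lambda/2N)(X_iX_N^T)^2$. Expanding $X_iX_N^T=\sum_m x_i(m)x_N(m)$ and using $\sum_{i=1}^{N-1}x_i^{(\ell)}(m)x_i^*(s)=NR^-_{\ell,*}(m,s)$, the off-diagonal spike coupling produces $\lambda\sum_{m,s}R^-_{\ell,*}(m,s)y^{(\ell)}(m)y^*(s)$ at once, and the off-diagonal self-interaction produces $-\tfrac{\lambda}{2}\sum_{m,s}R^-_{\ell,\ell}(m,s)y^{(\ell)}(m)y^{(\ell)}(s)$; the diagonal analogues give single-site expressions such as $y^{(\ell)}(m)^2 y^*(s)^2$ carrying the prefactor $\lambda/(w_2N)$.

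\textbf{Gaussian IBP and assembly.} For the Wigner pieces apply Gaussian IBP with $\bbE[W_{iN}^2]=1$ ($i<N$) and $\bbE[W_{NN}^2]=w_2$. Using
\[
\partial_{W_{iN}}\langle g\rangle_t=\sqrt{\lambda t/N}\,\Bigl(\sum_{\ell'=1}^{n+r}\langle g\,X_i^{(\ell')}X_N^{(\ell')T}\rangle_t - (n+r)\langle g\,X_i^{(n+r+1)}X_N^{(n+r+1)T}\rangle_t\Bigr)
\]
for a Gibbs average over $n+r$ replicas, the prefactor $\tfrac{1}{2\sqrt t}\sqrt{\lambda/N}\cdot\sqrt{\lambda t/N}=\lambda/(2N)$ removes the apparent $t\to 0$ singularity. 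Summing over $i$ converts $\sum_i x_i^{(\ell)}(m)x_i^{(\ell')}(s)$ into $NR^-_{\ell,\ell'}(m,s)$, and the $\ell'=\ell$ (respectively $\ell'=n+1$) self-overlap terms so produced cancel the off-diagonal self-interaction contribution from the previous step exactly. The diagonal IBP runs identically, with $w_2=\bbE[W_{NN}^2]$ cancelling the $1/w_2$ in the Hamiltonian; absent any $\sum_i$ to promote $1/N$ into an overlap, the diagonal contributions retain the $1/(w_2N)$ prefactor. Organizing what remains by overlap type $(\ell,\ell')$, $(\ell,n+1)$, $(\ell,*)$, $(n+1,*)$, $(n+1,n+2)$ yields the five off-diagonal terms with coefficients $\tfrac{\lambda}{2}$, $-\lambda n$, $\lambda$, $-\lambda n$, $\tfrac{\lambda n(n+1)}{2}$ and their five diagonal analogues.

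\textbf{Main obstacle.} The algebra is elementary but the combinatorial bookkeeping is the delicate part: the $-\lambda n$ coefficient of the $(\ell,n+1)$ overlap combines $-\tfrac{\lambda n}{2}$ from the cavity in the first application of IBP and $-\tfrac{\lambda n}{2}$ from the IBP applied in the $-n\langle\cdot\rangle_t$ sector after invoking the symmetry $R^-_{\ell,n+1}=R^-_{n+1,\ell}$; each self-overlap cancellation must be checked in both sectors; and the $n+1$ factor in the $(n+1,n+2)$ coefficient must emerge correctly from the differentiation that produces the second cavity replica. Making sure every coefficient and every cancellation matches the stated form exactly is the step that needs the most care.
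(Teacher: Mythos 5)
Your proposal is correct and follows exactly the route the paper intends: the paper's proof is simply the one-line citation ``It follows from the Gaussian integration by parts,'' deferring to Talagrand, and your write-up carries out that standard cavity/IBP computation in detail, with the coefficients, the cancellation of the self-overlap terms against the $-(\lambda t/2N)(X_iX_N^T)^2$ pieces, and the $\E[W_{NN}^2]=w_2$ bookkeeping for the diagonal sector all checking out against \eqref{eq:nu_t}. No gaps; you have in effect supplied the details the paper omits.
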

\begin{proof}
	It follows from the Gaussian integration by parts. See, e.g., \cite{talagrand2011mean1}.
\end{proof}

\begin{rem}\label{rem:cont_overlap}
	We remark that the number of overlap terms in $\nu'_t$ is larger than that of $\nu_t$. Thus, with a sufficient decay rate of the moments of the overlaps, it is easier to control the error terms of the derivatives in Taylor's approximation than those of the original functions. 
\end{rem}

\begin{lem}\label{bound_gibbs_derivative}
	If $f$ is a bounded nonnegative function, then there exists a constant $K(\lambda, n)$ such that for all $t \in [0,1]$
	\[
	\nu_t(f) \le K(\lambda, n) \nu(f).
	\]
\end{lem}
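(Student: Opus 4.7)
The plan is to establish the differential inequality $|\nu_t'(f)| \le C(\lambda, n)\, \nu_t(f)$ for all $t \in [0,1]$ and every bounded nonnegative $f$, and then invoke Gr\"{o}nwall's inequality on $[t, 1]$ to transfer the bound at $t = 1$ down to arbitrary $t$.

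First, by Lemma~\ref{gibbs_derivative}, $\nu_t'(f) = \sum_{m,s=1}^{k} \nu_t(f, m, s)$ is a finite linear combination---with coefficients depending only on $\lambda$ and $n$, and with a total number of summands bounded in terms of $n$ and $k$ alone---of two kinds of terms: off-diagonal contributions of the form $\nu_t\bigl(R^{-}_{\ell,\ell'}(m,s)\, y^{(\ell)}(m)\, y^{(\ell')}(s)\, f\bigr)$ and diagonal contributions of the form $\tfrac{1}{N}\nu_t\bigl(y^{(\ell)}(m)^2\, y^{(\ell')}(s)^2\, f\bigr)$, where the replicas $\ell, \ell'$ possibly include the extra indices $n{+}1$, $n{+}2$ as well as the spike index $*$.

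Second, since $\caP$ has bounded support by assumption, there exists a constant $K_0$ such that $|x_i^{(\ell)}(m)| \le K_0$ almost surely for all indices $i, \ell, m$ (including the extra replicas and the spike). Consequently $|R^{-}_{\ell,\ell'}(m,s)| \le K_0^2$ and $|y^{(\ell)}(m)| = |x_N^{(\ell)}(m)| \le K_0$, so every integrand appearing in $\nu_t'(f)$ is pointwise dominated by $K_0^4\, f$. Because the Gibbs average $\langle \cdot \rangle_t$ and the outer expectation are both monotone on nonnegative observables, summing the finitely many terms yields
\[
\bigl|\nu_t'(f)\bigr| \;\le\; C(\lambda, n)\, \nu_t(f),
\]
where $C(\lambda, n)$ depends additionally on the fixed parameters $k$, $w_2$, $K_0$ but is independent of $N$ and $t$.

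Finally, this is equivalent to $\nu_t'(f) \ge -C(\lambda,n)\, \nu_t(f)$, so the map $t \mapsto e^{C(\lambda, n) t}\, \nu_t(f)$ is nondecreasing on $[0,1]$; comparing its values at $t$ and $1$ gives
\[
\nu_t(f) \;\le\; e^{C(\lambda, n)(1-t)}\, \nu_1(f) \;\le\; e^{C(\lambda, n)}\, \nu(f),
\]
which proves the lemma with $K(\lambda, n) := e^{C(\lambda, n)}$. The only real obstacle is bookkeeping: one must verify that the number of summands and the size of the coefficients in Lemma~\ref{gibbs_derivative} depend only on $\lambda$ and $n$ (with $k$, $w_2$ fixed) and not on $N$, which is immediate from inspection of the Gaussian integration-by-parts formula.
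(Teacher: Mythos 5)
Your proof is correct and follows essentially the same route as the paper's: derive $|\nu_t'(f)| \le C(\lambda,n)\,\nu_t(f)$ from Lemma~\ref{gibbs_derivative} together with the boundedness of all variables and overlaps, then conclude by Gr\"onwall's inequality. The paper states this in two lines; you have merely filled in the bookkeeping (the pointwise domination of the integrands and the monotonicity of $t \mapsto e^{C t}\nu_t(f)$), which is accurate.
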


\begin{proof}
	By Gr\"onwall's inequality, it suffices to show for that there exists a constant $K'(\lambda, n)$ such that for all $t\in [0,1]$
	\[
	|\nu_t'(f)| \le K'(\lambda, n) \nu_t(f),
	\]
	which follows from Lemma~\ref{gibbs_derivative} and that all variables and overlaps in \eqref{eq:nu_t} are bounded.
\end{proof}

\subsection{Proof of Proposition \ref{prop:diagest}\label{proof:prop}}


Following the proof of Proposition 9 in \cite{AlaouiJordan2018}, we consider self-consistent relations among various quantities. More precisely, we prove that for any $\lambda<1$,
\beq \label{eq:self1}
(1-\lambda)N\E\left[\left<(R_{1,*}(\ell,m))^2\right>e^{\ii s\log\caL}\right]=\E\left[\left<x_N(\ell)^2x_N^{*}(m)^2\right>e^{\ii s\log\caL}\right]+\delta_1,
\eeq
and
\beq \label{eq:self2}
\E\left[\left<x_N(\ell)^2x_N^{*}(m)^2\right>e^{\ii s\log\caL}\right]=\E\left[e^{\ii s\log\caL}\right]+\delta_2,
\eeq
where $|\delta_1|,|\delta_2|\le K(\lambda)N\max_{1\leq\ell,m\leq k}\E\langle |R_{1,*}(\ell,m)|^3\rangle$ for some constant $K(\lambda)$.
The main challenge of the proof is to show the following claim:

\emph{Claim.}
\beq \label{eq:third_claim}
\E \langle|R_{1,*}(m,s)|^3 \rangle \leq\frac{K(\lambda)}{N^{3/2}}
\eeq
for some constant $K(\lambda)$.

We remark that \eqref{eq:third_claim} is the optimal convergence rate of the third moment of each element of the overlap matrix $R^k_{1,*}$ under $\E \langle \cdot \rangle$. Once we have \eqref{eq:third_claim}, we can find $\delta_1, \delta_2 = O(N^{-1/2})$ and the desired result can be obtained as in Section 6 of \cite{AlaouiJordan2018}. For example, \eqref{eq:self1} can be proved by the cavity computation with the family of interpolating Hamiltonians $H^k_t$ for $t\in[0,1]$ and associated functions
\beq
X(t):=\exp\left(\ii s\log\displaystyle\int e^{-H^{k}_t(X)}\dd\caP_0^{\otimes N}(X)\right)
\eeq
and
\begin{align*}
&\varphi(t):=N\E\left[\left<x_N(\ell)x_N^*(m) R_{1,*}^{-}(\ell,m)\right>_tX(t)\right],
&&\psi(t)=	\E\left[\left<x_N(\ell)^2x_N^{*}(m)^2\right>_tX(t)\right]
\end{align*}
where $\left<\cdot\right>_t$ is the Gibbs average under the Hamiltonian $H^{k}_t.$
Note that, by symmetry among variables, 
\beq
\E\left[\left(N\left<(R_{1,*}(\ell,m))^2\right>-\left<x_N(\ell)^2x_N^{*}(m)^2\right>\right)e^{is\log\caL}\right]=\varphi(1).
\eeq
Then we obtain $\varphi(0)=0$, $\psi(0)=\E[X(0)]$ and 
\begin{align*}
\varphi^\prime(0) 
&=\lambda N\E\left[\left<x_N(\ell)^2x^*_N(m)^{2} (R_{1,*}^{-}(\ell,m))^2\right>_0X(0)\right] \\
&\quad +\frac{\lambda}{w_2}\E\left[\left<x_N(\ell)^3x^*_N(m)^{3} R_{1,*}^{-}(\ell,m)\right>_0X(0)\right]\\
&=\lambda N\E\left[\left< (R_{1,*}^{-}(\ell,m))^2\right>_0X(0)\right]+\frac{\lambda\kappa^2}{w_2}\E\left[\left< R_{1,*}^{-}(\ell,m)\right>_0X(0)\right]
\end{align*}
where $\kappa$ is the third moment of the given spike prior $\caP,$ since the $N$-th variable decouples from the rest of the Hamiltonian system at $t=0.$

We now apply Taylor's theorem to $\varphi(t)$ and $\psi(t)$ to find that
\begin{align*}
&|\varphi(1)-\varphi(0)-\varphi'(0)|=\caO(\delta),
&&|\psi(1)-\E[e^{\ii s\log\caL}]|=\caO(\delta).
\end{align*} 
Note that the we can prove the bounds
\[
|\varphi''(t)|,|\psi'(t)|,\E|X'(t)|=\caO(\delta).
\] 
for the second order terms, which easily follows from H\"{o}lder's inequality; see also Remark \ref{rem:cont_overlap}.

Similarly, we let
\begin{align}
&\psi_1(t)=\lambda N\E\left[\left< (R_{1,*}^{-}(\ell,m))^2\right>_tX(t)\right],
&&\psi_2(t)=\E\left[\left< R_{1,*}^{-}(\ell,m)\right>_tX(t)\right],
\end{align}
and find that
\begin{align*}
&|\psi_1(1)-\psi_1(0)|=\caO(\delta),
&&|\psi_2(t)|=\caO(N^{-1/2}).
\end{align*}
From the symmetry of variables,
\beq\begin{split}
	&\left|\psi_1(1)-\lambda N\E\left[\langle (R_{1,*}(\ell,m))^2\rangle e^{\ii s\log\caL}\right]\right| \\
	&\leq2\lambda\E\langle|x_N(\ell)x^{*}_N(m)R_{1,*}^{-}(\ell,m)|\rangle+\frac{\lambda}{N}\E\langle(x_N(\ell)x^{*}_N(m))^2\rangle\leq\frac{K(\lambda)}{\sqrt{N}},
\end{split}
\eeq
we then obtain \eqref{eq:self1}.

We now return to the proof of the claim \eqref{eq:third_claim}. While we need an in probability bounds of the overlaps as in Proposition 16 of \cite{AlaouiJordan2018} to prove the optimal rate of convergence of the overlaps up to the critical threshold, for our purpose, it suffices to prove the following result, which shows that the overlaps converge to zero at an optimal rate for sufficiently small SNR $\lambda$.

\begin{prop}\label{convergence_fourth_moment_symmetric_paramagnetic}
	For all $\lambda<\lambda_0(k,\caP)$ and $m,s=1,\ldots,k$, there exists a constant $K=K(\lambda) <\infty $ such that
	\[
	\E \langle (R_{1,*}(m,s))^{4}\rangle \le \frac{K}{N^{2}}.
	\]
\end{prop}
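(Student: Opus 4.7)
The plan is to adapt the cavity/interpolation argument behind Proposition 16 of \cite{AlaouiJordan2018} to the rank-$k$ setting, via an induction on $N$ that closes a self-consistent inequality for $\nu((R_{1,*}(m,s))^4)$. Fix $(m,s)$ and write $R := R_{1,*}(m,s)$ and $R^- := R^-_{1,*}(m,s)$. Because $\caP$ has bounded support, the identity $R = R^- + N^{-1} x_N^{(1)}(m)\, x_N^*(s)$ together with elementary expansion of the fourth power reduces the task to bounding $\nu((R^-)^4)$, provided the second and third moment bounds $\nu(|R_{1,*}(m',s')|^p) \leq K(p) N^{-p/2}$ for $p=2,3$ are already in hand, uniformly in $(m',s')$. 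These lower-moment estimates are obtained in advance by an entirely analogous but shorter cavity argument run on $\nu((R^-)^2)$ and $\nu(|R^-|^3)$.

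For $\nu((R^-)^4)$ itself the key identity is Taylor's formula in the interpolation parameter,
\[
\nu((R^-)^4) = \nu_0((R^-)^4) + \int_0^1 \nu_t'((R^-)^4)\,dt.
\]
At $t=0$ the cavity spin $x_N$ decouples, so the Gibbs measure on $(X_1,\dots,X_{N-1})$ is the posterior of the rank-$k$ spiked Wigner model of size $N-1$, and $R^-$ equals $\tfrac{N-1}{N}$ times the overlap of that reduced model; the induction on $N$ then yields $\nu_0((R^-)^4) \leq K(\lambda)/N^2$. For the integrand, Lemma~\ref{gibbs_derivative} applied with $f = (R^-)^4$ expresses $\nu_t'((R^-)^4)$ as a finite sum of terms of the form $\lambda\, \nu_t(R^-_{\ell,\ell'}(m',s')\, y^{(\ell)}(m')\, y^{(\ell')}(s')\, (R^-)^4)$ together with diagonal contributions weighted by $\lambda/(w_2 N)$. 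Lemma~\ref{bound_gibbs_derivative} replaces $\nu_t$ by $\nu$ at the cost of a multiplicative constant, and then Cauchy--Schwarz paired with the prior second-moment bound, followed by Nishimori symmetry (to equate the various replica--replica and replica--spike moments across $(\ell,\ell',m',s')$), collapses every term into a combination of $\nu(R^4)$ and residues of order $N^{-2}$. Assembling everything produces the self-consistent estimate
\[
\nu(R^4) \leq \frac{K_1(\lambda)}{N^2} + C(k)\, \lambda\, \nu(R^4),
\]
which closes for $\lambda < \lambda_0(k,\caP) := 1/(2C(k))$ by absorbing the right-hand $\nu(R^4)$ on the left and advancing the induction on $N$.

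The principal obstacle is the bootstrapping. Because Lemma~\ref{gibbs_derivative} spawns every entry $R^-_{\ell,\ell'}(m',s')$ of the $k \times k$ overlap matrix at arbitrary replica indices, including the two extra replicas $X^{(n+1)}, X^{(n+2)}$ generated by Gaussian integration by parts, the whole argument must be run simultaneously for all $k^2$ entries with constants uniform in the number of replicas present in the Gibbs average. More seriously, closing the recurrence without a constant that blows up with $N$ requires the exponents in the H\"older/Cauchy--Schwarz step to be chosen so that the $O(N^{-1/2})$ smallness of $R^-_{\ell,\ell'}$ (rather than merely its $L^\infty$ bound) is actually exploited; otherwise the factor $1/(1-C(k)\lambda)$ compounds along the induction. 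The smallness of $\lambda$ enters precisely through the contraction requirement $C(k)\lambda < 1$, which fixes the definition of $\lambda_0(k,\caP)$.
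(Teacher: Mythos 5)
Your overall architecture --- a cavity interpolation in $t$, a self-consistent inequality for the fourth moment closed by absorbing a $C(k)\lambda\,\nu(R^4)$ term for $\lambda$ small, bootstrapped from a previously established second-moment bound and run uniformly over all $k^2$ overlap entries and replica indices --- is exactly the paper's (Lemmas \ref{main_bound_for_fourth} and \ref{lem:fourth_moment}, mirroring Lemmas \ref{main_bound_for_second} and \ref{lem:second moment}). The one structural difference is how the zeroth-order term is handled. You identify $\langle\cdot\rangle_0$ with the size-$(N-1)$ posterior and run an induction on $N$ to get $\nu_0((R^-)^4)\le K/N^2$. The paper avoids induction on $N$ entirely: by exchangeability of coordinates it writes $\nu(R^4)=\nu\bigl(x_N(m)x_N^*(s)\,R^3\bigr)$ and Taylor-expands $f=x_N(m)x_N^*(s)(R^-)^3$, for which $\nu_0(f)=0$ because the decoupled cavity variable is centered; the $\nu_0((R^-)^4)$ that then appears in $\nu_0'(f)$ is converted back to $\nu(R^4)$ by a second forward Taylor expansion in $t$. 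This sidesteps the two delicate points of your route, both of which you would have to address: (i) $\langle\cdot\rangle_0$ is not exactly the $(N-1)$-model, since the Hamiltonian keeps the $1/N$ normalization (effective SNR $\lambda(N-1)/N$), so the induction hypothesis must be stable under this perturbation; and (ii) the constant must be shown not to grow along the induction, which is a strictly stronger requirement than the single contraction $C(k)\lambda<1$.

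Two steps do not close as literally written. First, ``Cauchy--Schwarz paired with the prior second-moment bound'' applied to a term $\lambda\,\nu_t\bigl(|R^-_{\ell,\ell'}(m',s')|\,(R^-)^4\bigr)$ yields $\lambda N^{-1/2}\nu_t((R^-)^8)^{1/2}\lesssim \lambda N^{-1/2}\nu(R^4)^{1/2}$, and Young's inequality then leaves an $O(N^{-1})$ residue, not the $O(N^{-2})$ you need. The working splitting is the paper's: use the trivial bound $|R^-_{\ell,\ell'}|\le C$ on the first-derivative terms (so they are absorbed into $C\lambda\,\nu(R^4)$), and generalized H\"older up to the fifth moment on the second-derivative terms together with $\nu(|R|^5)\le C\nu(R^4)$; the genuine $O(N^{-2})$ residues then come only from the diagonal $\lambda/(w_2N)$ contributions and from replacing $R$ by $R^-$ (Lemma \ref{difference of overlap}), both controlled by $\nu(R^2)\le K/N$. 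Your closing remark that the $O(N^{-1/2})$ smallness of the overlaps, rather than their $L^\infty$ bound, must be exploited in the H\"older step is therefore backwards for the absorption term: there the $L^\infty$ bound is what is used, and the smallness enters only through the residuals. Second, the separate ``shorter cavity argument'' for the third moment is neither needed nor easy (odd powers do not decouple cleanly at $t=0$); the paper only feeds in $\nu(|R|^3)\le C\nu(R^2)\le C/N$ from boundedness, and the sharp rate $N^{-3/2}$ for the third moment is deduced \emph{after} Proposition \ref{convergence_fourth_moment_symmetric_paramagnetic} by H\"older.
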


From Proposition \ref{convergence_fourth_moment_symmetric_paramagnetic} and H\"{o}lder's inequality we immediately obtain that
\[
\E\left\langle|R_{1,*}(m,s)|^3\right\rangle\le K(\lambda)\E\left\langle (R_{1,*}(m,s))^4\right\rangle^{3/4}\le\frac{K(\lambda)}{N^{3/2}},
\]
which proves \eqref{eq:third_claim}.

\subsection{Overlap convergence}\label{sec:overlap_0}

It remains to prove Proposition~\ref{convergence_fourth_moment_symmetric_paramagnetic}, which asserts the convergence of overlaps to zero under $\P_2$. We follow the algebraic cavity computation in \cite{Chen2018,AlaouiJordan2018}.
We begin by proving an estimate on the second moment,
\[
\E\langle (R_{1,*}(m,s))^2\rangle \le \frac{K}{N},
\]
which will result in the conclusion of Proposition~\ref{convergence_fourth_moment_symmetric_paramagnetic}. For the proof, we use the following two algebraic lemmas for the overlaps.

\begin{lem}\label{difference of overlap}
	For any $p\ge0$ and $m,s=1,2,\ldots,k$,
	\[
	\absv{(R_{1,*}(m,s))^{p+1}-(R^{-}_{1,*}(m,s))^{p+1}}\le\frac{C(p)}{N}\left(|R_{1,*}(m,s)|^{p}+|R^{-}_{1,*}(m,s)|^{p}\right)
	\]
\end{lem}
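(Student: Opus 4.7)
The plan is to exploit two elementary facts. First, the only way $R_{1,*}(m,s)$ and $R^{-}_{1,*}(m,s)$ differ is in the single term with index $i=N$, namely
\[
R_{1,*}(m,s)-R^{-}_{1,*}(m,s) \;=\; \frac{1}{N}\,x^{(1)}_N(m)\,x^{*}_N(s).
\]
Since the prior $\caP$ has bounded support by assumption, the entries $x^{(1)}_N(m)$ and $x^{*}_N(s)$ are uniformly bounded by a constant $B=B(\caP)$, so this difference is at most $B^2/N$ in absolute value.

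Second, I would apply the standard algebraic identity
\[
a^{p+1}-b^{p+1} \;=\; (a-b)\sum_{j=0}^{p}a^{j}b^{p-j},
\]
valid for any real $a,b$ and integer $p\ge 0$, with $a=R_{1,*}(m,s)$ and $b=R^{-}_{1,*}(m,s)$. Taking absolute values and bounding each mixed monomial by $|a|^{j}|b|^{p-j}\le\max(|a|,|b|)^{p}\le |a|^{p}+|b|^{p}$, I obtain
\[
\bigl|a^{p+1}-b^{p+1}\bigr| \;\le\; |a-b|\,(p+1)\bigl(|a|^{p}+|b|^{p}\bigr).
\]
Combining this with the $1/N$ bound on $|a-b|$ from the previous paragraph yields
\[
\bigl|(R_{1,*}(m,s))^{p+1}-(R^{-}_{1,*}(m,s))^{p+1}\bigr| \;\le\; \frac{B^{2}(p+1)}{N}\Bigl(|R_{1,*}(m,s)|^{p}+|R^{-}_{1,*}(m,s)|^{p}\Bigr),
\]
which is the claimed inequality with $C(p)=B^{2}(p+1)$.

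There is no real obstacle here: the lemma is purely algebraic plus the boundedness of the prior. The only thing to be careful about is to use a factorization that produces exactly the factor $|a-b|$ on the right (so that the $1/N$ gain appears), rather than, say, a mean-value-type bound that would only give a factor of $(p+1)\max(|a|,|b|)^{p}$ without the difference. I would write the proof in one or two short lines within the paper.
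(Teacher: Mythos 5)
Your proof is correct and follows essentially the same route as the paper: the paper likewise reduces the claim to the elementary inequality $|x^{p+1}-y^{p+1}|\le p\,|x-y|\,(|x|^p+|y|^p)$ together with the observation that $R_{1,*}(m,s)-R^{-}_{1,*}(m,s)=\frac{1}{N}x_N^{(1)}(m)x_N^{*}(s)$ is $O(1/N)$ by the boundedness of the prior. Your explicit factorization $a^{p+1}-b^{p+1}=(a-b)\sum_{j=0}^{p}a^{j}b^{p-j}$ merely supplies the proof of that elementary inequality (with the harmless constant $p+1$ in place of $p$), so there is nothing to add.
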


\begin{proof}
	The case of $p=0$ is obvious. It readily follows from an elementary inequality that 
	\[
	\absv{x^{p+1}-y^{p+1}}\le p\absv{x-y}(|x|^p+|y|^p)
	\]
	for any $x,y\in\bbR$ and $p\ge1$.
\end{proof}

\begin{lem}\label{detach from Nth}
	For a positive integer $p$, suppose that there exists a constant $C\ge1$ such that 
	\[
	\nu((R_{1,*}(m,s))^{2j})\le\frac{C}{N^j}
	\]
	for any $0\le j\le p$ and $m,s=1,2,\ldots,k.$
	Then,
	\[
	\nu((R^{-}_{1,*}(m,s))^{2p})\le\frac{C'(p)}{N^p}.
	\]
\end{lem}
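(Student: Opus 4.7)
\medskip
\noindent\textbf{Proof proposal.}
The plan is to reduce $R^{-}_{1,*}(m,s)$ back to $R_{1,*}(m,s)$ via the simple algebraic identity
\[
R_{1,*}(m,s) - R^{-}_{1,*}(m,s) = \frac{1}{N} x_N^{(1)}(m)\, x_N^{*}(s),
\]
and then use the boundedness of the prior $\caP$ (which is an assumption of Theorem \ref{thm:main}) to control the extra terms that arise. Specifically, I would expand
\[
\bigl(R^{-}_{1,*}(m,s)\bigr)^{2p} \;=\; \sum_{j=0}^{2p}\binom{2p}{j} \bigl(R_{1,*}(m,s)\bigr)^{2p-j}\left(-\frac{x_N^{(1)}(m)\,x_N^{*}(s)}{N}\right)^{j}
\]
by the binomial theorem, apply $\nu$, and estimate each summand separately.

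First, since $\caP$ has bounded support there is a constant $K_{\caP}$ such that $|x_N^{(1)}(m)|,|x_N^{*}(s)|\le K_{\caP}$ almost surely, so the cavity factor is deterministically bounded by $K_{\caP}^{2j}/N^{j}$ and can be pulled out of the expectation. Next, I would split the binomial sum according to the parity of $2p-j$. When $2p-j$ is even, say $2p-j=2i$ with $0\le i\le p$, the hypothesis yields $\nu\bigl((R_{1,*}(m,s))^{2i}\bigr)\le C/N^{i}$, so the combined contribution of such a term is bounded by a constant times $N^{-(j+i)}=N^{-(p+j/2)}$, which is $O(N^{-p})$ uniformly in $j$.

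The remaining obstacle, and the only step that is not a purely mechanical application of the hypothesis, is the case of odd $2p-j$. Here I would use the observation that the overlap is itself uniformly bounded: the boundedness of $\caP$ implies $|R_{1,*}(m,s)|=\bigl|\tfrac{1}{N}\sum_i x_i^{(1)}(m)x_i^{*}(s)\bigr|\le K_{\caP}^{2}$ almost surely. Consequently, for odd $2p-j\ge 1$ one has the pointwise bound
\[
|R_{1,*}(m,s)|^{2p-j}\;\le\;K_{\caP}^{2}\,(R_{1,*}(m,s))^{2p-j-1},
\]
and $2p-j-1$ is now an even integer in $\{0,2,\ldots,2p-2\}$, so the hypothesis applies and gives $\nu\bigl(|R_{1,*}(m,s)|^{2p-j}\bigr)\le K_{\caP}^{2}C/N^{(2p-j-1)/2}$. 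Combined with the $N^{-j}$ factor from the cavity prefactor, this contributes $O(N^{-(p+(j-1)/2)})=O(N^{-p})$.

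Finally I would sum the at most $2p+1$ contributions, each of which is $O(N^{-p})$ with a constant depending only on $p$, $C$, and $K_{\caP}$, to conclude $\nu\bigl((R^{-}_{1,*}(m,s))^{2p}\bigr)\le C'(p)/N^{p}$. The only subtlety is the odd-power step just described; once one realizes that $|R_{1,*}(m,s)|$ is uniformly bounded, every other estimate is a routine application of the binomial expansion and the hypothesis.
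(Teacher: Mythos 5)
Your proof is correct and follows essentially the same route as the paper: binomial expansion of $(R^{-}_{1,*}(m,s))^{2p}$ around $R_{1,*}(m,s)$, a deterministic bound on the cavity factor $\frac{1}{N}x_N^{(1)}(m)x_N^{*}(s)$ from the bounded support of $\caP$, and the moment hypothesis applied term by term. The only minor difference is in the odd-exponent terms, where the paper invokes the Schwarz inequality $\nu(|R_{1,*}(m,s)|^{j})\le\nu((R_{1,*}(m,s))^{2j})^{1/2}$ while you use the pointwise bound $|R_{1,*}(m,s)|\le K_{\caP}^{2}$ to drop to the adjacent even moment; both yield the same $O(N^{-p})$ estimate.
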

\begin{proof}
	Since $R^{-}_{1,*}(m,s)=R_{1,*}(m,s)-\frac1{N}x_N(m)x^*_N(s)$, from the binomial expansion
	\[\begin{split}
	\nu((R^{-}_{1,*}(m,s))^{2p})
	&\le\sum_{j=0}^{2p}{{2p}\choose{j}}\frac1{N^{2p-j}}\nu(|R_{1,*}(m,s)|^{j}|x_N(m)x^*_N(s)|^{2p-j})\\
	&\le\sum_{j=0}^{2p}{{2p}\choose{j}}\frac{C(p)}{N^{2p-j}}\nu(|R_{1,*}(m,s)|^{j}). 	
	\end{split}
	\]
	Then, from the assumption of the lemma,
	\[ \begin{split}
	\nu((R^{-}_{1,*}(m,s))^{2p})\le C\sum_{j=0}^{2p}{{2p}\choose{j}}\left(\frac{C(p)}{N}\right)^{2p-j}\left(\frac{1}{\sqrt{N}}\right)\le C\left(\frac{C(p)}{N}+\frac1{\sqrt{N}}\right)^{2p}\le \frac{C'(p)}{N^p},
	\end{split}
	\]
	where we used the Schwarz inequality $\nu(|R_{1,*}(m,s)|^{j})\le\nu(|R_{1,*}(m,s)|^{2j})^{\frac1{2}}$.
\end{proof}

\begin{lem}\label{main_bound_for_second}
	There exist constants $C_1$ and $C_2$, independent of $N$ and $\lambda$, such that
	\beq
	\begin{split}
		\max_{1\le m,s\le k}\nu((R_{1,*}(m,s))^{2})&\le C_1 \lambda^2 \max_{1\le m,s\le k}\nu(|R_{1,*}(m,s)|^{3})\\
		&\quad +\lambda \max_{1\le m,s\le k}\nu((R_{1,*}(m,s))^{2})+\frac{C_2 \lambda}{N}.
	\end{split}
	\eeq
\end{lem}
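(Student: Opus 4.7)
The plan is to apply the cavity interpolation $H_{t}^{k}$ together with Taylor's formula to second order, exploiting that $\caP^{\otimes k}$ has independent, centered, unit-variance coordinates. By spin-exchange symmetry of $\E\langle\cdot\rangle$, $\nu(R_{1,*}(m,s)^{2})=\nu(y^{(1)}(m)y^{*}(s)R_{1,*}(m,s))$ with $y^{(\ell)}:=x_{N}^{(\ell)}$ and $y^{*}:=x_{N}^{*}$. Splitting $R_{1,*}=R_{1,*}^{-}+N^{-1}y^{(1)}(m)y^{*}(s)$ and using boundedness of $\caP$ reduces the task to analyzing
\[
\varphi(t):=\nu_{t}\bigl(y^{(1)}(m)\,y^{*}(s)\,R_{1,*}^{-}(m,s)\bigr)
\]
via Taylor's formula $\varphi(1)=\varphi(0)+\varphi^{\prime}(0)+\int_{0}^{1}(1-t)\varphi^{\prime\prime}(t)\,\dd t$; the replacement error is $O(1/N)$, and at $t=0$ the $N$-th spin decouples from the Hamiltonian so $y^{(1)}(m)$ is an independent mean-zero draw from $\caP$, forcing $\varphi(0)=0$.

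For $\varphi^{\prime}(0)$, I apply Lemma \ref{gibbs_derivative} with $n=1$ to $f=y^{(1)}(m)y^{*}(s)R_{1,*}^{-}(m,s)$. At $t=0$ every integrand in \eqref{eq:nu_t} factors into a Gibbs/prior expectation over the decoupled $N$-th coordinate times an $R^{-}$-expectation. Because $\caP^{\otimes k}$ has independent, centered coordinates of unit variance, any monomial containing some decoupled spin to odd order vanishes; inspection of the eleven types of terms in \eqref{eq:nu_t} shows the only survivor is the $(\ell,\ell^{\prime})=(1,*)$ term with matched indices $(m^{\prime},s^{\prime})=(m,s)$, while the diagonal $1/(w_{2}N)$ contributions are $O(\lambda/N)$. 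This yields
\[
\varphi^{\prime}(0)=\lambda\,\nu_{0}\bigl(R_{1,*}^{-}(m,s)^{2}\bigr)+O(\lambda/N).
\]

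Differentiating \eqref{eq:nu_t} once more produces $\varphi^{\prime\prime}(t)$ as a sum of $\lambda^{2}$-order terms, each a $\nu_{t}$-expectation of a product of three $R^{-}$-overlaps times a bounded spin polynomial. By AM--GM, $|abc|\le\tfrac{1}{3}(|a|^{3}+|b|^{3}+|c|^{3})$; by the Nishimori property for the Bayes-optimal interpolating model, every replica-replica overlap has the same moments as $R_{1,*}^{-}$; and Lemmas \ref{bound_gibbs_derivative} and \ref{difference of overlap} convert $\nu_{t}(|R^{-}|^{3})$ into $\max_{m^{\prime},s^{\prime}}\nu(|R_{1,*}(m^{\prime},s^{\prime})|^{3})$. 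Hence $|\int_{0}^{1}(1-t)\varphi^{\prime\prime}(t)\,\dd t|\le C_{1}^{\prime}\lambda^{2}\max_{m^{\prime},s^{\prime}}\nu(|R_{1,*}(m^{\prime},s^{\prime})|^{3})+O(\lambda^{2}/N)$. To replace $\nu_{0}(R_{1,*}^{-}(m,s)^{2})$ in $\varphi^{\prime}(0)$ by $\nu(R_{1,*}(m,s)^{2})$ without losing the coefficient $1$ in front of $\lambda$, write $\nu_{0}(R_{1,*}^{-}(m,s)^{2})-\nu(R_{1,*}^{-}(m,s)^{2})=-\int_{0}^{1}\nu_{t}^{\prime}(R_{1,*}^{-}(m,s)^{2})\,\dd t$ and apply the same triple-overlap H\"older estimate; the difference is $O(\lambda\max\nu(|R_{1,*}|^{3})+\lambda/N)$, and multiplication by the outer $\lambda$ absorbs it into the $\lambda^{2}\nu(|R|^{3})$ term. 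Applying Lemma \ref{difference of overlap} one last time and taking the maximum over $(m,s)$ finishes the argument.

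The main obstacle is obtaining precisely the coefficient $1$ in front of $\lambda\max\nu(R_{1,*}^{2})$: a direct use of the Gr\"onwall-type bound of Lemma \ref{bound_gibbs_derivative} would introduce an unwanted constant $K>1$. Avoiding this requires the explicit $\nu_{0}$-to-$\nu$ comparison described above, in which the triple-overlap H\"older inequality is reused so that the excess is folded into the higher-order $\lambda^{2}\nu(|R|^{3})$ term rather than the linear $\lambda\,\nu(R^{2})$ term.
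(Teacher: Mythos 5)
Your proposal is correct and follows essentially the same route as the paper's proof: the same cavity interpolation and second-order Taylor expansion of $\nu_t\bigl(x_N(m)x_N^*(s)R^{-}_{1,*}(m,s)\bigr)$, the same identification of $\varphi'(0)=\lambda\nu_0((R^{-}_{1,*}(m,s))^2)+O(\lambda/N)$, the same triple-overlap bound on $\varphi''$ via the Nishimori property together with Lemmas \ref{difference of overlap} and \ref{bound_gibbs_derivative} (you use AM--GM where the paper uses the generalized H\"older inequality, an immaterial difference), and, crucially, the same explicit $\nu_0$-to-$\nu$ comparison for $(R^{-}_{1,*}(m,s))^2$ that preserves the unit coefficient in front of $\lambda\max\nu(R_{1,*}^2)$ by folding the excess into the $\lambda^2\nu(|R_{1,*}|^3)$ term.
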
 
\begin{proof}
	By the symmetry between the variables, we have
	\beq
	\nu( (R_{1,*}(m,s))^{2}) = \nu( x_N(m) x_N^*(s) (R_{1,*}(m,s)))=\nu( x_N(m) x_N^*(s) (R^{-}_{1,*}(m,s)))+ \Delta
	\eeq
	where $\Delta=\nu(x_N(m) x_N^*(s)((R_{1,*}(m,s))-(R^{-}_{1,*}(m,s))))$.
	Then, by Lemmas~\ref{difference of overlap} and \ref{detach from Nth},
	\beq
	|\Delta|\le C\nu\left(\absv{(R_{1,*}(m,s))-(R^{-}_{1,*}(m,s))}\right)\le\frac{C}{N},
	\eeq
	and thus
	\beq \label{eq:B.17}
	\nu( (R_{1,*}(m,s))^2)\le\nu( x_N(m) x_N^*(s) (R^{-}_{1,*}(m,s)))+\frac{C'(p)}{N}
	\eeq
	for some constants $C$ and $C' =C'(p)$. (In this proof, $C$ and $C'$ will denote various constants independent of $N$ that may differ line by line.)
	
	To estimate the first term of the right side of \eqref{eq:B.17}, we let $f=x_N(m) x_N^*(s) (R^{-}_{1,*}(m,s))$ and apply Lemma~\ref{gibbs_derivative}. Note that $\nu_0(f)=0,$ since $\caP$ is centered. By Taylor's theorem,
	\beq\label{eq:Taylor}
	\nu( (R_{1,*}(m,s))^2)\le\nu(f)+\frac{C}{N}\le \nu'_0(f)+\frac1{2}\sup_{0\le t\le1}|\nu''_t(f)|+\frac{C}{N}.
	\eeq
	We now estimate $\nu'_0(f)$. From Lemma~\ref{gibbs_derivative} with 1-replica, we have that
	\begin{align}
	\nu_0'(f) 
	&= -\sum_{m',s'=1}^k\lambda \nu_0(A(1,2,m',s',m,s))+\sum_{m',s'=1}^k\lambda \nu_0(A(1,*,m',s',m,s))\nonumber\\
	&~~~ - \sum_{m',s'=1}^k\lambda \nu_0(A(2,*,m',s',m,s))+  \sum_{m',s'=1}^k \lambda \nu_0(A(2,3,m',s',m,s))\nonumber\\
	&~~~- \sum_{m',s'=1}^k \frac{\lambda}{2w_2 N} \nu_0(B(1,2,m',s',m,s))+\sum_{m',s'=1}^k \frac{\lambda}{w_2 N} \nu_0(B(1,*,m',s',m,s))\nonumber
	\\&~~~-\sum_{m',s'=1}^k \frac{\lambda }{w_2 N}  \nu_0(B(2,*,m',s',m,s)) +\sum_{m',s'=1}^k \frac{\lambda}{w_2 N} \nu_0(B(2,3,m',s',m,s))\nonumber
	\\&= \lambda \nu_0((R^{-}_{1,*}(m,s))^2)+ \frac{\lambda\kappa^2}{w_2 N} \nu_0((R^{-}_{1,*}(m,s))),\label{eq:remaining term}
	\end{align}
	where we use the notation
	\[
	A(a,b,m',s',m,s)=y^{(a)}(m')y^{(b)}(s')y^{(1)}(m)y^*(s)(R^-_{a,b}(m',s'))(R^-_{1,*}(m,s))
	\] 
	and 
	\[
	B(a,b,m',s',m,s)=y^{(a)}(m')^2y^{(b)}(s')^2y^{(1)}(m)y^*(s)(R^-_{1,*}(m,s)).
	\]
	Note that the second term in the right-side of \eqref{eq:remaining term} is $O(N^{-1})$ since $R^-_{1,*}(m,s)$ is bounded.
	
	We now turn to $\nu''_t(f)$. Using Lemma~\ref{gibbs_derivative} recursively, we find that $\nu''_t(f)$ is represented by a linear combination of functions of the following forms:
	\begin{itemize}
		\item $\lambda^2\nu_t(R^-_{\ell_1,\ell_2}(m_1,s_1)R^-_{\ell_3,\ell_4}(m_2,s_2)y^{(\ell_1)}(m_1)y^{(\ell_2)}(s_1)y^{(\ell_3)}(m_2)y^{(\ell_4)}(s_2)f)$
		\item $\frac{\lambda^2}{N}\nu_t(R^-_{\ell_1,\ell_2}(m_1,s_1)y^{(\ell_1)}(m_1)y^{(\ell_2)}(s_1)y^{(\ell_3)}(m_2)^2y^{(\ell_4)}(s_2)^2f)$
		\item $\frac{\lambda^2}{N^2}\nu_t(y^{(\ell_1)}(m_1)^2y^{(\ell_2)}(s_1)^2y^{(\ell_3)}(m_2)^2y^{(\ell_4)}(s_2)^2f)$
	\end{itemize}
	where $\ell_1\neq\ell_2$ and $\ell_3\neq\ell_4$.
	For the terms of the first form, we have that for any $1\le\ell_1\neq\ell_2\le n$, $1\le\ell_3\neq\ell_4\le n$ and $1\le m_1,s_1,m_2,s_2\le k$
	\begin{align*}
	\nu_t(|R^{-}_{1,*}(m,s)|
	&|R^-_{\ell_1,\ell_2}(m_1,s_1)||R^-_{\ell_3,\ell_4}(m_2,s_2)|)\\
	&\le\nu_t(|R^{-}_{1,*}(m,s)|^3)^{1/3}\nu_t(|R^-_{\ell_1,\ell_2}(m_1,s_1)|^3)^{1/3}\nu_t(|R^-_{\ell_3,\ell_4}(m_2,s_2)|^3)^{1/3}\\
	&\le C\max_{1\le m,s\le k}\nu(|R_{1,*}^-(m,s)|^{3}),
	\end{align*}
	where we used the generalized H\"{o}lder's inequality, the Nishimori property and Lemma~\ref{bound_gibbs_derivative}. The other terms are obviously $O(N^{-1})$.
	Further, by Lemma~\ref{difference of overlap}, for any $h\in\N$
	\begin{align*}
	\nu(|R_{1,*}^-(m,s)|^{h+1})
	&\le\nu(|R_{1,*}(m,s)|^{h+1})+\frac{C}{N}(\nu(|R_{1,*}(m,s)|^h)+\nu(|R_{1,*}^-(m,s)|^h))
	\\&\le\nu(|R_{1,*}(m,s)|^{h+1})+\frac{C}{N}.
	\end{align*}
	Together with \eqref{eq:Taylor}, \eqref{eq:remaining term}, we thus have
	\beq \begin{split} \label{eq:B18}
		\nu( (R_{1,*}(m,s))^2)
		\le \lambda^2c_1\max_{1\le m,s\le k} \nu(|R_{1,*}(m,s)|^{3}) +\lambda \max_{1\le m\le k}\nu_0((R_{1,*}^-(m,s))^{2})+\frac{\lambda c_2}{N}
	\end{split} \eeq
	for some constants $c_1,c_2$. 
	
	We now control the second term in the right side of \eqref{eq:B18}. 
	Applying the same argument to $f=(R_{1,*}^-(m,s))^2$,
	\[
	\nu_0((R_{1,*}^-(m,s))^{2})\le \nu((R_{1,*}^-(m,s))^{2})+\sup_{0\le t\le1}|\nu'_t((R_{1,*}^-(m,s))^{2})|.
	\]
	Notice that
	\[
	|\nu'_t((R_{1,*}^-(m,s))^2)|\le C\lambda \max_{1\le m,s\le k}\nu(|R_{1,*}(m,s)|^{3})+\frac{C'}{N}
	\]
	and
	\[
	\nu((R_{1,*}^-(m,s))^{2})\le\nu((R_{1,*}(m,s))^{2})+\frac{C}{N}.
	\]
	Thus, from \eqref{eq:B18},
	\[\begin{split}
	\nu( (R_{1,*}(m,s))^2)
	\le C_1 \lambda^2 \max_{1\le m,s\le k}\nu(|R_{1,*}(m,s)|^{3}) +\lambda \max_{1\le m\le k}\nu((R_{1,*}(m,s))^{2})+\frac{C_2 \lambda }{N}
	\end{split}\]
	for some constants $C_1$ and $C_2$, independent of $N$ and $\lambda$.
	This concludes the proof of the desired lemma.
\end{proof}

From Lemma \ref{main_bound_for_second}, we obtain the following lemma.
\begin{lem}\label{lem:second moment} 
	There exists a positive constant $\lambda_{0,1}$ such that for any $\lambda<\lambda_{0,1}$
	\beq\label{limsup_square}
	\limsup_{N\to\infty} \; N\max_{1\le m,s\le k}\nu((R_{1,*}(m,s))^2)<\infty
	\eeq
\end{lem}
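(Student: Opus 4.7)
\textbf{Proof plan for Lemma \ref{lem:second moment}.} The plan is to close the recursive inequality supplied by Lemma \ref{main_bound_for_second} by a simple interpolation between the cubic and quadratic moments of the overlap. Write
\[
M_2 := \max_{1\le m,s\le k}\nu\bigl((R_{1,*}(m,s))^2\bigr), \qquad M_3 := \max_{1\le m,s\le k}\nu\bigl(|R_{1,*}(m,s)|^3\bigr).
\]
Lemma \ref{main_bound_for_second} then reads
\[
M_2 \;\le\; C_1\lambda^2 M_3 \;+\; \lambda M_2 \;+\; \frac{C_2\lambda}{N}.
\]
To conclude, the only thing needed is to dominate $M_3$ by a constant multiple of $M_2$ and then absorb the $\lambda^2$- and $\lambda$-terms on the right into the left, which is possible provided $\lambda$ is chosen small.

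The key observation is that since the prior $\caP$ has bounded support, every coordinate $x_i^{(1)}(m)$ of a replica and every coordinate $x_i^{*}(s)$ of the planted spike lies in a fixed compact interval; the replicas inherit this because the posterior density is a product of a Gibbs weight and $\dd\caP_0^{\otimes N}$, and is therefore supported on the prior support. Hence there is a constant $C_0>0$ depending only on $\caP$ (and on $k$) with
\[
|R_{1,*}(m,s)| \;=\; \Bigl|\tfrac{1}{N}\sum_{i=1}^N x_i^{(1)}(m)\,x_i^{*}(s)\Bigr| \;\le\; C_0,
\]
which immediately gives $M_3 \le C_0 M_2$. Substituting this into the recursion yields
\[
\bigl(1-\lambda - C_0 C_1\lambda^2\bigr)\, M_2 \;\le\; \frac{C_2\lambda}{N}.
\]
Now pick $\lambda_{0,1}\in(0,1)$ so small that $1-\lambda - C_0C_1\lambda^2 \ge \tfrac12$ for all $\lambda<\lambda_{0,1}$ (a purely quadratic condition in $\lambda$, independent of $N$). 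Then $M_2 \le 2C_2\lambda/N$, so $N M_2 \le 2C_2\lambda$ uniformly in $N$, which is exactly the claim $\limsup_{N\to\infty} N M_2 <\infty$.

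The main conceptual step is the reduction $M_3 \le C_0 M_2$, which is essentially free thanks to the bounded-support assumption on $\caP$; there is no real obstacle beyond this. Were one forced to work with priors of unbounded support, this passage from cubic to quadratic moment would be the hard part and one would have to iterate Lemma \ref{main_bound_for_second} jointly with a separate bound on $M_3$, for example by running an analogous cavity computation for $\nu(|R_{1,*}(m,s)|^{3})$ and invoking Lemmas \ref{difference of overlap}--\ref{detach from Nth} to control the resulting off-by-one discrepancies, in the spirit of Section 7 of \cite{AlaouiJordan2018}. In the present setting, however, the whole argument reduces to the one-line bootstrap above.
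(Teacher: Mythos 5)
Your proof is correct and follows essentially the same route as the paper: the paper likewise uses the bounded support of $\caP$ to bound the overlap by a constant, thereby dominating the cubic moment by the quadratic one, and then absorbs the $\lambda$- and $\lambda^2$-terms for $\lambda$ small (the paper even makes the threshold explicit as $\lambda_{0,1}=2/(\sqrt{1+4\wt C_1}+1)$). No changes needed.
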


\begin{proof} 
	Since the overlap is trivially bounded by some fixed constant, from Lemma~\ref{main_bound_for_second}, we find that
	\begin{align*}
	\max_{1\le m,s\le k}\nu((R_{1,*}(m,s))^{2})
	\le (\wt C_1\lambda +1)\lambda  \max_{1\le m,s\le k}\nu((R_{1,*}(m,s))^{2})+\frac{C_2\lambda}{N},
	\end{align*}
	where $\wt C_1$ is the constant which depends on $C_1$ in Lemma \ref{main_bound_for_second} and the bound of $\caP$. Thus, if
	\[
	\lambda < \lambda_{0,1}:= \frac{2}{\sqrt{1+4\wt C_1} +1},
	\]
	then
	\beq
	\max_{1\le m,s\le k}\nu((R_{1,*}(m,s))^{2})\le\frac{C_2 \lambda}{N(1- (\wt C_1\lambda+1)\lambda)}.
	\eeq
	This proves the desired lemma.
\end{proof}

With the bound from Lemma \ref{lem:second moment}, the corresponding result for the fourth moment can be proved in a similar manner, and we only state the series of lemmas that lead us to the conclusion.

\begin{lem}\label{main_bound_for_fourth}
	There exists constants $K_1$ and $K_2$, independent of $N$ and $\lambda$, such that
	\beq
	\begin{split}
		&\max_{1\le m,s\le k}\nu((R_{1,*}(m,s))^{4})\\
		&\le K_1\lambda^2\max_{1\le m,s\le k}\nu(|R_{1,*}(m,s)|^{5})+\lambda \max_{1\le m,s\le k}\nu((R_{1,*}(m,s))^{4})+\frac{K_2\lambda}{N^2}.
	\end{split}
	\eeq  
\end{lem}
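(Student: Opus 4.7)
The plan is to mirror the strategy of Lemma \ref{main_bound_for_second}, bootstrapping on the a priori second-moment bound of Lemma \ref{lem:second moment} so as to extract an additional factor of $N^{-1}$ at each place where the earlier proof produced $O(N^{-1})$. First, by the symmetry among the variables,
\[
\nu((R_{1,*}(m,s))^4) = \nu(x_N(m)\, x_N^*(s)\, (R_{1,*}(m,s))^3),
\]
and Lemma \ref{difference of overlap} with $p = 2$ lets me replace $R_{1,*}$ by $R^-_{1,*}$ inside this expression, at an error bounded by $\tfrac{C}{N}(\nu((R_{1,*})^2) + \nu((R^-_{1,*})^2))$; combining Lemma \ref{lem:second moment} with Lemma \ref{detach from Nth} makes this error $O(N^{-2})$.

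Setting $f := x_N(m)\, x_N^*(s)\, (R^-_{1,*}(m,s))^3$, the centering of $\caP$ together with the decoupling of $x_N$ at $t=0$ yields $\nu_0(f) = 0$, so Taylor's theorem with Lagrange remainder gives
\[
\nu(f) \le \nu'_0(f) + \tfrac{1}{2}\sup_{0 \le t \le 1}|\nu''_t(f)|.
\]
Evaluating Lemma \ref{gibbs_derivative} at $t=0$ with $n=1$ replica, only the term that pairs $y^{(1)}(m')\, y^*(s')$ with the $y^{(1)}(m)\, y^*(s)$ factors in $f$ survives (forcing $m' = m$ and $s' = s$), producing the main contribution $\lambda\, \nu_0((R^-_{1,*}(m,s))^4)$. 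The diagonal terms of Lemma \ref{gibbs_derivative} generate a contribution of the form $\tfrac{\lambda \kappa^2}{w_2 N}\nu_0((R^-_{1,*})^3)$, where $\kappa$ is the third moment of $\caP$; applying $|R^-_{1,*}|^3 \le C (R^-_{1,*})^2$ together with Lemma \ref{lem:second moment} bounds this by $O(N^{-2})$.

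For $\nu''_t(f)$ I apply Lemma \ref{gibbs_derivative} a second time and estimate each of the finitely many resulting terms. The off-diagonal pieces carry at most five overlap factors---three from $f$ and two produced by the two differentiations---together with bounded $y$-factors; by the Nishimori property, Lemma \ref{bound_gibbs_derivative}, the generalized H\"older inequality, and converting $R^-$ to $R$ via Lemma \ref{difference of overlap}, they are jointly bounded by $K_1 \lambda^2 \max_{1 \le m,s \le k}\nu(|R_{1,*}(m,s)|^5)$. The diagonal pieces carry an explicit factor of $N^{-1}$ and, after one application of Cauchy--Schwarz combined with Lemma \ref{lem:second moment}, contribute only $O(N^{-2})$.

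Finally I pass from $\nu_0((R^-_{1,*})^4)$ back to $\nu((R_{1,*})^4)$ by applying the same Taylor argument to $g := (R^-_{1,*}(m,s))^4$: $\nu_0(g) \le \nu(g) + \sup_{t \in [0,1]} |\nu'_t(g)|$, where the analogous cavity computation gives $|\nu'_t(g)| \le C\lambda \max_{m,s}\nu(|R_{1,*}(m,s)|^5) + O(N^{-2})$, and Lemma \ref{difference of overlap} with the second-moment bound yields $\nu(g) \le \nu((R_{1,*})^4) + O(N^{-2})$. Collecting all contributions produces the stated inequality. The main obstacle will be the bookkeeping of $\nu''_t(f)$: two successive applications of Lemma \ref{gibbs_derivative} to a function carrying three $R^-$-factors generate a long list of terms, and ensuring that each one is absorbable either into $K_1 \lambda^2 \max \nu(|R|^5)$ or into $O(N^{-2})$---rather than the weaker $O(N^{-1})$ that the rank-$1$ argument of \cite{AlaouiJordan2018} produces---requires systematic use of Lemma \ref{lem:second moment} alongside H\"older's inequality at every step.
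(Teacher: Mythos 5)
Your argument is correct and is exactly the route the paper intends: it states that Lemma \ref{main_bound_for_fourth} "can be proved in a similar manner" to Lemma \ref{main_bound_for_second} using the bound from Lemma \ref{lem:second moment}, and your proposal carries out precisely that cavity/Taylor computation with $f = x_N(m)x_N^*(s)(R^-_{1,*}(m,s))^3$, using the established second-moment decay to upgrade every $O(N^{-1})$ remainder of the second-moment argument to $O(N^{-2})$. The bookkeeping of $\nu''_t(f)$ and the final passage from $\nu_0((R^-_{1,*})^4)$ back to $\nu((R_{1,*})^4)$ are handled as in the paper's rank-$1$-style template, so no gap remains.
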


\begin{lem} \label{lem:fourth_moment}
	There exists a positive constant $\lambda_{0,2} \equiv \lambda_0$ such that for any $\lambda<\lambda_{0,2}$,
	\beq
	\limsup_{N\to\infty} \; N^2\max_{1\le m,s\le k}\nu((R_{1,*}(m,s))^4)<\infty.
	\eeq
\end{lem}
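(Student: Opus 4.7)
The plan is to mimic the argument for Lemma~\ref{lem:second moment} one level up, using Lemma~\ref{main_bound_for_fourth} in place of Lemma~\ref{main_bound_for_second}. The key observation is that the prior $\caP$ has bounded support, so each entry $x_i^{(\ell)}(m)$ is uniformly bounded. Consequently, every overlap satisfies $|R_{1,*}(m,s)| \le C$ almost surely for some deterministic constant $C$ depending only on the bound of $\caP$ and on $k$. This yields the crude reduction
\[
\nu(|R_{1,*}(m,s)|^{5}) \le C \, \nu((R_{1,*}(m,s))^{4}),
\]
which converts the fifth-moment term appearing on the right-hand side of Lemma~\ref{main_bound_for_fourth} into one involving only the fourth moment.

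Substituting this bound into Lemma~\ref{main_bound_for_fourth} then produces a self-bounding inequality
\[
\max_{1\le m,s\le k}\nu((R_{1,*}(m,s))^{4}) \le (\wt K_1 \lambda + 1)\lambda \max_{1\le m,s\le k}\nu((R_{1,*}(m,s))^{4}) + \frac{K_2\lambda}{N^{2}},
\]
where $\wt K_1$ depends only on $K_1$ and $C$. This is the exact fourth-moment analogue of the inequality derived in the proof of Lemma~\ref{lem:second moment}, with $N$ replaced by $N^2$ in the error term. Requiring the coefficient $(\wt K_1 \lambda + 1)\lambda$ to be strictly less than $1$ picks out the threshold
\[
\lambda_{0,2} := \min\!\left(\lambda_{0,1},\, \frac{2}{\sqrt{1+4\wt K_1}+1}\right),
\]
the minimum being taken so that the second-moment bound from Lemma~\ref{lem:second moment} (which is presumably used inside the derivation of Lemma~\ref{main_bound_for_fourth} to make the error term decay like $1/N^2$) remains available.

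For any $\lambda < \lambda_{0,2}$, rearranging the inequality gives
\[
\max_{1\le m,s\le k}\nu((R_{1,*}(m,s))^{4}) \le \frac{K_2\lambda}{N^{2}\bigl(1-(\wt K_1\lambda+1)\lambda\bigr)},
\]
from which $\limsup_{N\to\infty} N^{2}\max_{m,s}\nu((R_{1,*}(m,s))^{4}) < \infty$ follows at once.

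The only real difficulty is hidden inside Lemma~\ref{main_bound_for_fourth}: to establish the $1/N^2$ scaling of the error term one must iterate Lemma~\ref{gibbs_derivative} to sufficiently high order, invoking the second-moment bound from Lemma~\ref{lem:second moment} to dispose of the intermediate terms that carry a single factor of the overlap. Once that inequality is in hand, the deduction above is a routine repetition of the argument for Lemma~\ref{lem:second moment}, and the main point is simply to identify the regime of $\lambda$ in which the self-bounding inequality closes.
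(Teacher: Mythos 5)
Your proposal is correct and follows essentially the same route the paper intends: the paper explicitly says the fourth-moment result "can be proved in a similar manner" to Lemma~\ref{lem:second moment}, i.e.\ by using the almost-sure boundedness of the overlap to reduce $\nu(|R_{1,*}(m,s)|^{5})$ to $\nu((R_{1,*}(m,s))^{4})$ in Lemma~\ref{main_bound_for_fourth} and then closing the resulting self-bounding inequality for $\lambda$ small. Your identification of where the second-moment bound enters (inside the derivation of the $O(\lambda/N^2)$ error term) also matches the paper's intent.
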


\subsection{Extension to the case $k_1\neq 0$}\label{app:extension}
In this section, we prove Theorem \ref{thm:main} for the case $k_1\neq 0$. Recall that 
\[
\phi_N(\lambda)=\E_{\bbP_2}e^{\ii s\log\caL(Y;k_1,k_2)}=\E_{\bbP_2}e^{\ii s\log\caL(Y;k_2)-\log\caL(Y;k_1)}:=\E e^{\ii s\log\caL}.
\]
As in the case of $k_1=0$, we assume $\bsH_2$. 
For simplicity, we first consider the case where there are no diagonal elements as in \cite{AlaouiJordan2018}. 

In what follows, we present some notations for convenience:
\begin{itemize}
	\item We denote by $\langle\cdot\rangle^{[1]}$, $\langle\cdot\rangle^{[2]}$, and $\langle\cdot\rangle^{[1,2]}$ the Gibbs measures with respect to the Hamiltonians $-H^{k_1}(X^{[1]})$, $-H^{k_2}(X^{[2]})$ and $-H^{k_1,k_2}(X^{[1]},X^{[2]})=-H^{k_1}(X^{[1]})-H^{k_2}(X^{[2]})$, respectively. 
	\item Correspondingly, we use Talagrand's notations $\nu^{[1]}_t(f)=\E\langle\cdot\rangle^{[1]}_{t}$, $\nu^{[2]}_t(f)=\E\langle\cdot\rangle^{[2]}_{t}$ and $\nu^{[1,2]}_t(f)=\E\langle\cdot\rangle^{[1,2]}_{t}$ for the Gibbs measures $\langle\cdot\rangle^{[1]}_t$, $\langle\cdot\rangle^{[2]}_t$ and $\langle\cdot\rangle^{[1,2]}_t$ with respect to the interpolating Hamiltonians $-H_t^{k_1}(X^{[1]})$, $-H_t^{k_2}(X^{[2]})$ and $-H_t^{k_1,k_2}(X^{[1]},X^{[2]})=-H_t^{k_1}(X^{[1]})-H_t^{k_2}(X^{[2]})$, respectively. 
	\item We denote the overlaps by $R^{[1]}_{\ell,\ell'}(m,n)=\frac{1}{N}\sum_{i=1}^{N} x^{[1],(\ell)}_i(m)x_i^{[1],(\ell')}(n)$, $R^{[2]}_{\ell,\ell'}(m,n)=\frac{1}{N}\sum_{i=1}^{N} x_i^{[2],(\ell)}(m)x_i^{[2],(\ell')}(n)$ and $R^{[1,2]}_{\ell,\ell'}(m,n)=\frac{1}{N}\sum_{i=1}^{N} x_i^{[1],(\ell)}(m)x_i^{[2],(\ell')}(n)$.
\end{itemize}
Applying Stein's lemma, we have 
\beq\label{eq:derivative_char}
\begin{split}
	\phi'_N(\lambda)
	&=\frac{s^2}{2}\sum_{m=1}^{k_1}\sum_{n=1}^{k_2}\E\left[(N\langle R^{[1,2]}_{1,1}(m,n)^2\rangle^{[1,2]}-\langle x^{[1]}_N(m)^2x^{[2]}_N(n)^2\rangle^{[1,2]})e^{\ii s\log\caL}\right]\\
	&~~~-\frac{ s^2}{4}\sum_{m=1}^{k_2}\sum_{n=1}^{k_2}\E\left[(N\langle R^{[2]}_{1,2}(m,n)^2\rangle^{[2]}-\langle x^{[2],(1)}_N(m)^2x^{[2],(2)}_N(n)^2\rangle^{[2]})e^{\ii s\log\caL}\right]\\
	&~~~-\frac{s^2}{4}\sum_{m=1}^{k_1}\sum_{n=1}^{k_1}\E\left[(N\langle R^{[1]}_{1,2}(m,n)^2\rangle^{[1]}-\langle x^{[1],(1)}_N(m)^2x^{[1],(2)}_N(n)^2\rangle^{[1]})e^{\ii s\log\caL}\right]\\
	&~~~-\frac{\ii s}{4}\sum_{m=1}^{k_2}\sum_{n=1}^{k_2}\E\left[(N\langle R^{[2]}_{1,2}(m,n)^2\rangle^{[2]}-\langle x^{[2],(1)}_N(m)^2x^{[2],(2)}_N(n)^2\rangle^{[2]})e^{\ii s\log\caL}\right]\\
	&~~~+\frac{\ii s}{4}\sum_{m=1}^{k_1}\sum_{n=1}^{k_1}\E\left[(N\langle R^{[1]}_{1,2}(m,n)^2\rangle^{[1]}-\langle x^{[1],(1)}_N(m)^2x^{[1],(2)}_N(n)^2\rangle^{[1]})e^{\ii s\log\caL}\right]\\
	&~~~+\frac{\ii s}{2}\sum_{m=1}^{k_2}\sum_{n=1}^{k_2}\E\left[(N\langle R^{[2]}_{1,*}(m,n)^2\rangle^{[2]}-\langle x^{[2]}_N(m)^2x^{*}_N(n)^2\rangle^{[2]})e^{\ii s\log\caL}\right]\\
	&~~~-\frac{\ii s}{2}\sum_{m=1}^{k_1}\sum_{n=1}^{k_2}\E\left[(N\langle R^{[1]}_{1,*}(m,n)^2\rangle^{[1]}-\langle x^{[1]}_N(m)^2x^{*}_N(n)^2\rangle^{[1]})e^{\ii s\log\caL}\right].
\end{split}
\eeq
Let 
\[ \begin{split}
A(p)= \max\Big\{ &\max\limits_{m,n}\nu^{[1]}(|R^{[1]}_{1,2}(m,n)|^p), \max\limits_{m,n}\nu^{[1]}(|R^{[1]}_{1,\ast}(m,n)|^p), \\
&\max\limits_{m,n}\nu^{[2]}(|R^{[2]}_{1,2}(m,n)|^p), \max\limits_{m,n}\nu^{[1,2]}(|R^{[1,2]}_{1,1}(m,n)|^p) \Big\}
\end{split} 
\]
be the maximum of the $p$-th moments of all possible overlaps. Note that 
\[
\nu^{[2]}(|R^{[2]}_{1,2}(m,n)|^p)=\nu^{[2]}(|R^{[2]}_{1,\ast}(m,n)|^p)
\]
by the Nishimori property under $\E_{\bbP_2}\langle\cdot\rangle^{[2]}$, and 
\[
\nu^{[1]}(|R^{[1]}_{1,2}(m,n)|^p)=\nu^{[1,2]}(|R^{[1]}_{1,2}(m,n)|^p).
\]

Our first goal is to prove that the maximal moments $A(2)$ and $A(4)$ vanish at an optimal rate when $\lambda$ is sufficiently small. We note that all algebraic lemmas in Section \ref{sec:overlap_0} also hold for all overlaps shown in the $k_1\neq0$ case. For the proof, we need the following lemma that generalizes 
Lemma \ref{gibbs_derivative}.
\begin{lem}\label{gibbs_derivative2}
	Let $f$ be a function of $n$ replicas $X^{[1],(1)},\cdots,X^{[1],(n)}$, $X^{[2],(1)},\cdots,X^{[2],(n)}$ and $X^*$. Then
	\begin{align}
	\nu^{[1,2]\prime}_t(f) 
	&= \frac{\lambda }{2} \sum_{i=1}^2\sum_{m_i,s_i=1}^{k_i}\sum_{1\le \ell\neq \ell' \le n} \nu_t^{[1,2]}(R^{[i]-}_{\ell,\ell'}(m_i,s_i) y^{[i],(\ell)}(m_i)y^{[i],(\ell')}(s_i)f) 
	\nonumber
	\\&+\lambda \sum_{m=1}^{k_1}\sum_{s=1}^{k_2}\sum_{\ell, \ell'=1}^n \nu_t^{[1,2]}(R^{[1,2]-}_{\ell,\ell'}(m,s) y^{[1],(\ell)}(m)y^{[2],(\ell')}(s)f)\label{derivative2}
	\\& - \lambda  n \sum_{i=1}^2\sum_{m_i,s_i=1}^{k_i}\sum_{\ell=1}^n \nu_t^{[1,2]}(R^{[i]-}_{\ell,n+1}(m_i,s_i) y^{[i],(\ell)}(m_i)y^{[i],(n+1)}(s_j)f) 
	\nonumber
	\\& - \frac{\lambda n}{2}   \sum_{m=1}^{k_1}\sum_{s=1}^{k_2}\sum_{\ell=1}^n \nu_t^{[1,2]}(R^{[1,2]-}_{n+1,\ell}(m,s)y^{[1],(n+1)}(m) y^{[2],(\ell)}(s)f) 
	\nonumber
	\\& - \frac{\lambda n}{2}   \sum_{m=1}^{k_1}\sum_{s=1}^{k_2}\sum_{\ell=1}^n \nu_t^{[1,2]}(R^{[1,2]-}_{\ell,n+1}(m,s)y^{[1],(\ell)}(m) y^{[2],(n+1)}(s)f) 
	\nonumber
	\\&+ \lambda  \sum_{i=1}^2\sum_{m_i=1}^{k_i}\sum_{s=1}^{k_2}\sum_{\ell=1}^n \nu_t^{[1,2]}(R^{[i]-}_{\ell,*}(m_i,s) y^{[i],(\ell)}(m_i)y^{*}(s)f)
	\nonumber
	\\&- \lambda  n\sum_{i=1}^2 \sum_{m_i=1}^{k_i}\sum_{s=1}^{k_2} \nu_t^{[1,2]}(R^{[i]-}_{n+1,*}(m_i,s) y^{(n+1)}(m_i)y^{*}(s)f) 
	\nonumber
	\\&+  \frac{\lambda n(n+1)}{2} \sum_{i=1}^2\sum_{m_i,s_i=1}^{k_i}\nu_t^{[1,2]}(R^{[i]-}_{n+1,n+2}(m_i,s_i) y^{(n+1)}(m_i)y^{(n+2)}(s_j)f)
	\nonumber
	\\&+  \frac{\lambda n(n+1)}{2} \sum_{m=1}^{k_1}\sum_{s=1}^{k_2}\nu_t^{[1,2]}(R^{[1,2]-}_{n+1,n+2}(m,s) y^{[1],(n+1)}(m)y^{[2],(n+2)}(s)f)
	\nonumber
	\\&+  \frac{\lambda n(n+1)}{2} \sum_{m=1}^{k_1}\sum_{s=1}^{k_2}\nu_t^{[1,2]}(R^{[1,2]-}_{n+2,n+1}(m,s) y^{[1],(n+2)}(m)y^{[2],(n+1)}(s)f)
	\nonumber
	\end{align}
	with $y = x_N$.
\end{lem}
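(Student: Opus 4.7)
\medskip

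The plan is to imitate the proof of Lemma~\ref{gibbs_derivative}, treating the combined measure $\langle\cdot\rangle^{[1,2]}_t$ as the Gibbs measure of a single Hamiltonian $-H_t^{k_1,k_2}=-H_t^{k_1}(X^{[1]})-H_t^{k_2}(X^{[2]})$ built from the \emph{same} Gaussian noise matrix $W$. Writing
\[
\nu^{[1,2]\prime}_t(f)=\E\big\langle f\cdot(-\partial_t H_t^{k_1,k_2})\big\rangle^{[1,2]}_t-\E\Big[\big\langle f\big\rangle^{[1,2]}_t\cdot\big\langle -\partial_t H_t^{k_1,k_2}\big\rangle^{[1,2]}_t\Big],
\]
I would expand $-\partial_t H_t^{k_1,k_2}$ as the sum over $i<N$ of its noise piece $\sqrt{\lambda/N}\,W_{iN}(X_i^{[1]}X_N^{[1]T}+X_i^{[2]}X_N^{[2]T})$, its planted piece coming from the $X^{\ast}$-interaction, and its deterministic quadratic piece $-\tfrac{\lambda}{2N}\big((X_iX_N^T)^2\big)$ for each rank. (Omitting the diagonal contributions consistently with the reduction performed in the main text.)

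The engine is then Gaussian integration by parts in each $W_{iN}$. Because one and the same $W_{iN}$ enters the Hamiltonians for both ranks, each IBP application produces a covariance with \emph{every} replica of both species: differentiating $e^{-H_t^{k_1,k_2}(X^{(\ell)})}$ in $W_{iN}$ gives a factor $\sqrt{\lambda t/N}(X_i^{[1],(\ell)}X_N^{[1],(\ell)T}+X_i^{[2],(\ell)}X_N^{[2],(\ell)T})$, and pairing this against the analogous factor coming from $-\partial_t H_t^{k_1,k_2}$ produces, after summing $i$ from $1$ to $N-1$ and dividing by $N$, the three overlap structures $R^{[1]-}_{\ell,\ell'}(m,s)$, $R^{[2]-}_{\ell,\ell'}(m,s)$, and $R^{[1,2]-}_{\ell,\ell'}(m,s)$, together with the corresponding products $y^{[i],(\ell)}(m_i)y^{[j],(\ell')}(s_j)$ at the cavity site $N$. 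The planted piece of $-\partial_t H_t^{k_1,k_2}$ is not Gaussian but it already contains $R^{[i]-}_{\ell,\ast}(m,s)\,y^{[i],(\ell)}(m)y^{\ast}(s)$ explicitly after extracting the cavity row, so it contributes the $R^{[i]-}_{\ast}$ terms directly; likewise the deterministic quadratic piece combines with one of the noise–noise pairings to cancel the corresponding same-replica ($\ell=\ell'$) contribution, leaving the sums over $\ell\neq\ell'$ in the first line of \eqref{derivative2}.

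The combinatorics is the only thing to be careful about, and this is where I expect to spend most of the bookkeeping. There are three sources of replicas that can play the role of $\ell'$: (i) the replicas $1,\dots,n$ already present in $f$ (these appear with a $+$ sign from the IBP against the numerator and, crucially, appear twice when $i\neq j$ because the $[1]$-noise pairs with the $[2]$-noise and vice versa, which is exactly what gives both the $R^{[1,2]-}_{n+1,\ell}$ and $R^{[1,2]-}_{\ell,n+1}$ terms in \eqref{derivative2}); (ii) a single cavity replica $X^{(n+1)}$ brought down from the denominator $Z_t^n$ in the Gibbs measure, with a $-n$ pre-factor; (iii) a cavity pair $X^{(n+1)},X^{(n+2)}$ brought down from the cross term $Z_t^{-n}\cdot Z_t^{-n}$ expansion, accounting for the $+n(n+1)/2$ factor. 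For each of the three overlap species one gets the corresponding family of terms, while the cross species $R^{[1,2]-}$ appears with the doubling explained above, explaining the $n(n+1)$ coefficients on the last two lines. The main (purely bookkeeping) obstacle is tracking these signs and multiplicities consistently; once this is done, grouping the resulting expressions yields \eqref{derivative2}, and an identical computation with $\langle\cdot\rangle^{[i]}_t$ in place of $\langle\cdot\rangle^{[1,2]}_t$ recovers Lemma~\ref{gibbs_derivative}, providing a useful sanity check.
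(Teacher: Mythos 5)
Your proposal is correct and follows exactly the route the paper intends: the paper states this lemma without an explicit proof, deferring (as for Lemma \ref{gibbs_derivative}) to the standard Gaussian integration by parts on the shared disorder $W_{iN}$, and your sketch correctly identifies the key new feature — that the same noise entry appears in both $H_t^{k_1}$ and $H_t^{k_2}$, producing the cross-overlaps $R^{[1,2]-}$ with the doubled/uncancelled-diagonal combinatorics — as well as the standard replica bookkeeping ($+1$ per numerator replica, $-n$ from the denominator, $+n(n+1)/2$ from the second differentiation of the denominator). The only imprecision is your opening display, which should carry a sum of $-\partial_t H_t^{k_1,k_2}(X^{(\ell)})$ over $\ell=1,\dots,n$ in the first term and a factor $n$ in the second, but your subsequent prose makes clear you have the correct structure.
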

Using Lemma \ref{gibbs_derivative2} instead of Lemma \ref{gibbs_derivative}, we have the following analogues of Lemmas \ref{main_bound_for_second} and \ref{main_bound_for_fourth}.

\begin{lem}\label{lem:inequalities} 
	There exist nonnegative constants $C_1$, $C_2$, $K_1$, and $K_2$, independent of $N$ and $\lambda$, such that
	\beq\label{second_moment_k_1>0}
	A(2)\leq C_1 \lambda^2 A(3)+\lambda A(2)+\frac{C_2 \lambda}{N}
	\eeq
	and 
	\beq\label{fourth_moment_k_1>0}
	A(4)\leq K_1 \lambda^2 A(5)+\lambda A(4)+\frac{K_2 \lambda}{N^2}.
	\eeq
\end{lem}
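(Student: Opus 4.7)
The plan is to adapt the cavity computation behind Lemma \ref{main_bound_for_second} to each of the four overlap types entering the definition of $A(p)$ and then take the maximum on both sides. All algebraic lemmas of Section \ref{sec:overlap_0} (Lemmas \ref{difference of overlap} and \ref{detach from Nth}) carry over without change, since they only concern the difference between an overlap and its cavity-removed counterpart. Consequently, the only genuine modification needed is to replace Lemma \ref{gibbs_derivative} by its two-system extension Lemma \ref{gibbs_derivative2} inside the Taylor expansion used in Lemma \ref{main_bound_for_second}.

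I will illustrate the strategy for the new overlap $R := R^{[1,2]}_{1,1}(m,s)$, the other three types being handled identically. Using the symmetry between sites and Lemmas \ref{difference of overlap}--\ref{detach from Nth},
\[
\nu^{[1,2]}(R^2) = \nu^{[1,2]}\bigl(x^{[1]}_N(m)\,x^{[2]}_N(s)\,R^{[1,2]-}_{1,1}(m,s)\bigr) + O(N^{-1}).
\]
Setting $f := x^{[1]}_N(m)\,x^{[2]}_N(s)\,R^{[1,2]-}_{1,1}(m,s)$, one has $\nu^{[1,2]}_0(f)=0$ because at $t=0$ the $N$-th row decouples and the prior is centered, so Taylor's theorem reduces the problem to estimating $\nu^{[1,2]\prime}_0(f)$ and $\sup_t|\nu^{[1,2]\prime\prime}_t(f)|$. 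Applying Lemma \ref{gibbs_derivative2} once, the only surviving term at $t=0$ (after the centered prior kills odd powers of the $N$-th variables) is the self-overlap piece $\lambda\,\nu^{[1,2]}_0((R^{[1,2]-}_{1,1}(m,s))^2)\le\lambda A(2)+O(N^{-1})$, all other contributions either being forced to vanish or carrying a $1/(w_2 N)$ prefactor from the diagonal block.

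For the second derivative, a further application of Lemma \ref{gibbs_derivative2} produces a finite sum of schematic terms $\lambda^2\,\nu^{[1,2]}_t(R_1 R_2\,y\cdot f)$, where $y$ is a bounded product of the $N$-th variables and $R_1,R_2$ are overlaps of any of the four types. Generalized H\"older's inequality, the Nishimori property, and Lemma \ref{bound_gibbs_derivative} together bound each such term by $C\lambda^2 A(3)$, while the diagonal contributions are $O(\lambda^2/N)$. Summing gives $\nu^{[1,2]}(R^2)\le\lambda A(2)+C\lambda^2 A(3)+O(\lambda/N)$, and taking the maximum over the four overlap types on the left establishes \eqref{second_moment_k_1>0}. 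The fourth-moment bound \eqref{fourth_moment_k_1>0} follows from exactly the same template with $f := x^{[1]}_N(m)\,x^{[2]}_N(s)\,(R^{[1,2]-}_{1,1}(m,s))^3$; the improved rate $N^{-2}$ comes from combining H\"older with the second-moment bound \eqref{second_moment_k_1>0} (deduced first, as in the passage from Lemma \ref{main_bound_for_second} to Lemma \ref{main_bound_for_fourth}).

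The main obstacle is bookkeeping rather than new analysis. The mixed overlap $R^{[1,2]}$ couples the interpolating systems $[1]$ and $[2]$, so Lemma \ref{gibbs_derivative2} shows that differentiating a moment of one overlap type generates terms involving \emph{all} four overlap types. As a consequence, one cannot close the inequality separately for $\nu^{[1]}((R^{[1]}_{1,2})^2)$, $\nu^{[2]}((R^{[2]}_{1,2})^2)$, $\nu^{[1,2]}((R^{[1,2]}_{1,1})^2)$, and $\nu^{[1]}((R^{[1]}_{1,\ast})^2)$; they must be packaged into the single scalar $A(2)$ (and similarly $A(3),A(4),A(5)$) before the Taylor estimate becomes a useful self-consistent inequality. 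Once this is done, the bootstrap of Lemmas \ref{lem:second moment} and \ref{lem:fourth_moment} applies verbatim to conclude Proposition \ref{convergence_fourth_moment_symmetric_paramagnetic} in the $k_1\neq 0$ case.
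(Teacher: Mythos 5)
Your proposal follows essentially the same route as the paper: reduce each of the four overlap types to a cavity/Taylor expansion driven by the two-system derivative formula (Lemma \ref{gibbs_derivative2}), identify the surviving first-derivative term at $t=0$ as the self-overlap piece, bound the second derivative by $C\lambda^2 A(3)$ via generalized H\"older, the Nishimori property and Lemma \ref{bound_gibbs_derivative}, and close the system by packaging all four overlap types into the single quantity $A(p)$. The only step you gloss over is the passage from $\nu_0^{[1,2]}\bigl((R^{[1,2]-}_{1,1})^2\bigr)$ to the $t=1$ average defining $A(2)$: you cannot invoke Lemma \ref{bound_gibbs_derivative} here without spoiling the exact coefficient $\lambda$ in front of $A(2)$, so, as the paper does, you need one further first-order Taylor expansion of $t\mapsto\nu_t^{[1,2]}\bigl((R^{[1,2]-}_{1,1})^2\bigr)$, whose derivative contributes only additional $\lambda^2 A(3)$ and $\lambda/N$ terms already present in the final inequality.
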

\begin{proof} 
	We set 
	\beq
	f_{1,s}=x^{[1],(1)}_N(m)x^{[1],(2)}_N(n)(R^{[1]-}_{1,2}(m,n))^s,
	\eeq
	\beq 
	f_{2,s}=x^{[1]}_N(m)x^{\ast}_N(n)(R^{[1]-}_{1,\ast}(m,n))^s
	\eeq
	
	\beq
	f_{3,s}=x^{[2],(1)}_N(m)x^{[2],(2)}_N(n)(R^{[2]-}_{1,2}(m,n))^s
	\eeq 
	and 
	\beq
	f_{4,s}=x^{[1]}_N(m)x^{[2]}_N(n)(R^{[1,2]-}_{1,1}(m,n))^s.
	\eeq 
	
	First, we consider $f_{4,1}=x^{[1],(1)}_N(m)x^{[2],(1)}_N(n)(R^{[1,2]-}_{1,1}(m,n))$. By Taylor's theorem and the symmetry of variables, 
	\begin{align}\label{first_k_1>0}
	\nu^{[1,2]}((R^{[1,2]}_{1,1}(m,n))^2)\leq\nu_0^{[1,2]\prime}(f_{4,1})+\frac{C}{N}+\frac1{2}\sup_{0\leq t\leq1}|\nu_t^{[1,2]\prime\prime}(f_{4,1})|.
	\end{align}
	As in Lemma \ref{main_bound_for_second}, using Lemma \ref{gibbs_derivative2} in place of Lemma \ref{gibbs_derivative} with 1-replica, the remaining term of $\nu^{[1,2]\prime}_0(f_{4,1})$ is from \eqref{derivative2} and it is 
	\begin{align}\label{second_k_1>0}
	\nu_0^{[1,2]\prime}(f_{4,1})=\lambda\nu_0^{[1,2]}((R^{[1,2]-}_{1,1}(m,n))^2).
	\end{align}
	Following the computation as in the case of $k_1=0$, we observe that the second derivative is a linear combination of 
	\begin{equation*}
	\lambda^2\nu^{[1,2]}_t(x^{[p_1],(q_1)}_N(a)x^{[p_2],(q_2)}_N(b)x^{[r_1],(s_1)}_N(c)x^{[r_2],(s_2)}_N(d)(R^{[p_1,q_1]-}_{p_2,q_2}(a,b))(R^{[r_1,s_1]-}_{r_2,s_2}(c,d))f_{4,1})
	\end{equation*}
	for admissible indices.
	From the representation above of the second derivative, we obtain   
	\begin{align}\label{third_k_1>0}
	|\nu_t^{[1,2]\prime\prime}(f_{4,1})|\leq \lambda^2c_1A(3)+\frac{\lambda c_2}{N}
	\end{align}
	by Lemmas \ref{gibbs_derivative2}, \ref{difference of overlap}, \ref{bound_gibbs_derivative} and H\"{o}lder's inequality. Using the first order Taylor approximation for $\nu_t^{[1,2]}((R^{[1,2]-}_{1,1}(m,n))^2)$, we also see that 
	\[
	\lambda\nu_0^{[1,2]}((R^{[1,2]-}_{1,1}(m,n))^2)\le \lambda\nu^{[1,2]}((R^{[1,2]}_{1,1}(m,n))^2)+\lambda^2 c'_1A(3)+\frac{\lambda c'_2}{N}
	\]
	since, as in the $k_1=0$ case, its first derivative also bounded by 
	\[
	|\nu'_t((R^{[1,2]-}_{1,1}(m,n))^2)|\le \lambda c_1''A(3)+\frac{\lambda c''_2}{N}
	\]
	Putting the above results together, we have 
	\begin{align}
	\max_{m,n}\nu^{[1,2]}((R^{[1,2]}_{1,1}(m,n))^2)\leq  C_1 \lambda^2 A(3)+\lambda\max_{m,n}\nu^{[1,2]}((R^{[1,2]}_{1,1}(m,n))^2)+\frac{C_2 \lambda}{N}.
	\end{align}
	From the definition of the Gibbs average, we directly see that
	\beq
	\nu^{[1]}(|R^{[1]}_{1,2}(m,n)|^p)=\nu^{[1,2]}(|R^{[1]}_{1,2}(m,n)|^p),
	\eeq
	\beq
	\nu^{[1]}(|R^{[1]}_{1,\ast}(m,n)|^p)=\nu^{[1,2]}(|R^{[1]}_{1,\ast}(m,n)|^p)
	\eeq
	and
	\beq
	\nu^{[2]}(|R^{[2]}_{1,2}(m,n)|^p)=\nu^{[1,2]}(|R^{[2]}_{1,2}(m,n)|^p).
	\eeq 
	Repeating the exactly same procedure for the other overlaps, we obtain the desired inequality \eqref{second_moment_k_1>0}. 
	
	It remains to prove \eqref{fourth_moment_k_1>0}. Using the functions $\{f_{i,3}\}_{1\leq i\leq4}$ instead of $\{f_{i,1}\}_{1\leq i\leq4}$, it is easy to obtain the desired inequality. We omit the details.
\end{proof}
Thus, applying the same argument as in Lemmas \ref{lem:second moment} and \ref{lem:fourth_moment}, we also obtain that for any $\lambda<\lambda_0$
\beq \label{eq:moments_bound}
A(2)=\caO(N^{-1}), \qquad A(4)=\caO(N^{-2})
\eeq
for some $\lambda_0>0$ that depends on $k_1$, $k_2$ and the bound for the prior distribution $\caP$ but not on $N$.

We now estimate the terms in \eqref{eq:derivative_char}. 
\begin{prop}\label{prop:est}
	Let $f$ be all terms that appeared in \eqref{eq:derivative_char}.
	Then, there exists a constant $\lambda_0$ such that for all $\lambda<\lambda_0$ and $s\in\R,$
	\beq
	\E\left[\langle f\rangle^{[1,2]}e^{\ii s\log\caL}\right]=\frac{\lambda}{1-\lambda}\E[e^{\ii s\log\caL}]+\caO(N^{-1/2}).
	\eeq
\end{prop}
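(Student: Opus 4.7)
The plan is to extend the one-spike cavity computation of Proposition \ref{prop:diagest} to the interpolating two-spike Gibbs measure $\langle\cdot\rangle^{[1,2]}_t$. The two ingredients I need are (i) an optimal third-moment bound for every overlap appearing, and (ii) a self-consistent relation for each of the seven expectations on the right-hand side of \eqref{eq:derivative_char}.

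First, I would upgrade the moment estimates \eqref{eq:moments_bound} to third-moment decay. By H\"older,
\[
\nu^{[1,2]}(|R^{[\bullet]}_{\ell,\ell'}(m,n)|^3)\le A(4)^{3/4}=\caO(N^{-3/2})
\]
uniformly in the four overlap families $R^{[1]}_{1,2},R^{[1]}_{1,*},R^{[2]}_{1,2},R^{[1,2]}_{1,1}$, for $\lambda<\lambda_0$. This plays the role of the claim \eqref{eq:third_claim} in the present setting, and via Lemmas \ref{difference of overlap} and \ref{detach from Nth} (which transfer verbatim to each overlap family) the same bound holds for the cavity overlaps $R^{[\bullet]-}_{\ell,\ell'}(m,n)$.

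Next, I would treat each $f$ in \eqref{eq:derivative_char} by the two-step strategy of \eqref{eq:self1}-\eqref{eq:self2}. I describe the representative case $g_{1,2}(m,n):=N\langle (R^{[1,2]}_{1,1}(m,n))^2\rangle^{[1,2]}-\langle x^{[1]}_N(m)^2 x^{[2]}_N(n)^2\rangle^{[1,2]}$; the other six terms follow identically (using the Nishimori property to identify the reduced one-sector Gibbs measures with the joint one). By symmetry among the sites,
\[
\E[\langle(R^{[1,2]}_{1,1}(m,n))^2\rangle^{[1,2]}e^{\ii s\log\caL}]=\E[\langle x^{[1]}_N(m)x^{[2]}_N(n)R^{[1,2]-}_{1,1}(m,n)\rangle^{[1,2]}e^{\ii s\log\caL}]+\caO(N^{-1}).
\]
I then set $X(t)$ to be the obvious interpolating version of $e^{\ii s\log\caL}$ under the Hamiltonian $-H^{k_1,k_2}_t$, and introduce
\[
\varphi(t):=N\,\E[\langle x^{[1]}_N(m)x^{[2]}_N(n)\,R^{[1,2]-}_{1,1}(m,n)\rangle^{[1,2]}_t X(t)],\qquad \psi(t):=\E[\langle x^{[1]}_N(m)^2 x^{[2]}_N(n)^2\rangle^{[1,2]}_t X(t)].
\]
At $t=0$ the $N$-th cavity site decouples, so $\varphi(0)=0$ and $\psi(0)=\E[X(0)]$. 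Differentiating with Lemma \ref{gibbs_derivative2} (one-replica version), only the term $\lambda \nu_0^{[1,2]}((R^{[1,2]-}_{1,1}(m,n))^2 X(0))$ survives because all the other contributions carry either a replica-sign cancellation or an extra $1/N$ from the diagonal piece. Taylor-expanding $\varphi$ to first order and $\psi$ to zeroth order, and iterating this identity once more with $\psi_1(t):=\lambda N\,\E[\langle (R^{[1,2]-}_{1,1}(m,n))^2\rangle^{[1,2]}_t X(t)]$ so that $\psi_1(1)-\psi_1(0)=\caO(N^{-1/2})$, I arrive at
\[
N\,\E[\langle(R^{[1,2]}_{1,1}(m,n))^2\rangle^{[1,2]}e^{\ii s\log\caL}]=\frac{1}{1-\lambda}\E[e^{\ii s\log\caL}]+\caO(N^{-1/2}),
\]
and subtracting the analogous zeroth-order identity for $\psi$ yields the proposition for $g_{1,2}$. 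The $R^{[1]}_{1,2}$, $R^{[2]}_{1,2}$, $R^{[1]}_{1,*}$, $R^{[2]}_{1,*}$ terms are handled in exactly the same way, using the Nishimori identity $\nu^{[j]}(\cdot)=\nu^{[1,2]}(\cdot)$ on the relevant sub-sector to reduce each to a computation identical to the one above.

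The main obstacle will be the bookkeeping for the second derivatives $\varphi''(t)$, $\psi'(t)$ and $\E|X'(t)|$: because Lemma \ref{gibbs_derivative2} now contains contributions from the three overlap families (within spike~1, within spike~2, and cross), expanding one more derivative produces a much larger collection of terms than in the rank-$1$ case. Every such term, however, is a product of (at least) two cavity overlaps $R^{[\bullet]-}_{\ell,\ell'}(\cdot,\cdot)$, possibly multiplied by a bounded factor and an extra $1/N$. By the generalized H\"older inequality and the uniform moment bounds $A(3)=\caO(N^{-3/2})$ and $A(2)=\caO(N^{-1})$, each such term is $\caO(N^{-1/2})$ uniformly in $t\in[0,1]$. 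The argument is mechanical but requires organising the double sum over $m_i,s_i$ and the replica indices $\ell,\ell',n+1,n+2,*$ in Lemma \ref{gibbs_derivative2} so that no term is overlooked; once this is in place, the interpolation argument of Section \ref{proof:prop} applies essentially verbatim and completes the proof of Proposition \ref{prop:est}.
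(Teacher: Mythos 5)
Your proposal is correct and follows essentially the same route as the paper: symmetrize over sites, interpolate with the cavity Hamiltonian $H^{k_1,k_2}_t$, Taylor-expand $\varphi$, $\psi$, $\psi_1$ using the overlap moment bounds $A(2)=\caO(N^{-1})$, $A(4)=\caO(N^{-2})$ (hence $A(3)=\caO(N^{-3/2})$ by H\"older), and close the self-consistent relation to obtain the factor $\lambda/(1-\lambda)$. The paper likewise works out only the representative term $N\langle (R^{[1,2]}_{1,1})^2\rangle^{[1,2]}-\langle x^{[1]}_N(m)^2x^{[2]}_N(n)^2\rangle^{[1,2]}$ and dispatches the remaining six by the Nishimori identifications, exactly as you propose.
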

\begin{proof}
	In this proof, we only consider the case $f=NR^{[1,2]}_{1,1}(m,n)^2-x^{[1]}_N(m)^2x^{[2]}_N(n)^2$; the corresponding results for the other terms can be proved in a similar manner.
	
	From the symmetry of variables 
	\[\begin{split}
	&\E\left[(N\langle (R^{[1,2]}_{1,1}(m,n))^2\rangle^{[1,2]}-\langle x^{[1]}_N(m)^2x^{[2]}_N(n)^2\rangle^{[1,2]})e^{\ii s\log\caL}\right]
	\\&=N\E\left[\langle x^{[1]}_N(m)x^{[2]}_N(n)R^{[1,2]-}_{1,1}(m,n)\rangle^{[1,2]}e^{\ii s\log\caL}\right].
	\end{split}
	\]
	Let 
	\beq
	X(t)=\exp\left(\ii s\log\int e^{-H_{t}^{k_2}(X^{[2]})}\dd\caP_{0,2}^{\otimes N}(X^{[2]})-\ii s\log\int e^{-H_{t}^{k_1}(X^{[1]})}\dd\caP_{0,1}^{\otimes N}(X^{[1]})\right)
	\eeq
	and 
	\begin{align*}
	&\varphi(t)=N\E\left[\langle x^{[1]}_N(m)x^{[2]}_N(n)R^{[1,2]-}_{1,1}(m,n)\rangle^{[1,2]}_{t}X(t)\right],
	&&\psi(t)=\E\left[\langle x^{[1]}_N(m)^2x^{[2]}_N(n)^2\rangle^{[1,2]}_{t}X(t)\right]
	\end{align*}
	From the bounds \eqref{eq:moments_bound}, we find that the second derivative of $\varphi$ is bounded by 
	\beq
	\sup_{0\le t\le1}|\varphi''(t)|=\caO(N^{-1/2}).
	\eeq
	Since $\varphi(0)=0$, from Taylor's theorem, $\varphi(1)= \varphi'(0) + \caO(N^{-1/2})$ .
	Further, we can check that 
	\beq\begin{split}
		\varphi'(0)
		&=\lambda N\E\left[\langle x^{[1]}_N(m)^2x^{[2]}_N(n)^2(R^{[1,2]-}_{1,1}(m,n))^2\rangle^{[1,2]}_{0}X(0)\right]
		\\&=\lambda N\E\left[\langle (R^{[1,2]-}_{1,1}(m,n))^2\rangle^{[1,2]}_{0}X(0)\right].
	\end{split}
	\eeq
	Now, we consider the function 
	\begin{align*}
	\psi_1(t):=\lambda N\E\left[\langle (R^{[1,2]-}_{1,1}(m,n))^2\rangle^{[1,2]}_{t}X(t)\right].
	\end{align*} 
	Similarly, we also find that $\psi_1(0)=\varphi'(0)$ and $\sup_{0\le t\le 1}|\psi_1'(t)|=\caO(N^{-1/2})$.
	From the symmetry of variables,  
	\beq\begin{split}
		&\left|\psi_1(1)-\lambda N\E\left[\langle (R^{[1,2]}_{1,1}(m,n))^2\rangle^{[1,2]}e^{\ii s\log\caL}\right]\right| \\
		&\leq2\lambda\E\langle|x^{[1]}_N(m)x^{[2]}_N(n)R^{[1,2]-}_{1,1}(m,n)|\rangle+\frac{\lambda}{N}\E\langle(x^{[1]}_N(m)x^{[2]}_N(n))^2\rangle^{[1,2]}\leq\frac{K(\lambda)}{\sqrt{N}},
	\end{split}
	\eeq
	Thus, comparing $\varphi(1)$ and $\psi_1(1)$, we get
	\[ \begin{split}
	\varphi(1) &=N\E\left[\langle x^{[1]}_N(m)x^{[2]}_N(n)\,R^{[1,2]-}_{1,1}(m,n)\rangle^{[1,2]}e^{\ii s\log\caL}\right]\\
	&=\lambda N\E\left[\langle (R^{[1,2]}_{1,1}(m,n))^2\rangle^{[1,2]}e^{\ii s\log\caL}\right]+\caO(N^{-1/2}).
	\end{split} 
	\]
	Similarly, for $\psi$, we also check that $\psi(0)=\E[X(0)]$, 
	\beq
	\sup_{0\leq t\leq 1}|\psi'(t)|=\caO(N^{-1/2}),
	\eeq 
	and
	\beq
	|\E[X(1)]-\E[X(0)]|\leq\sup_{0\le t\le 1}\E[|X'(t)|]\leq\frac{K}{\sqrt{N}}.
	\eeq
	We then find that 
	\begin{align}\label{eqn:self_second}
	\psi(1)&=\psi(0)+\caO(N^{-1/2})=\E[X(1)]+\caO(N^{-1/2})=\E [e^{\ii s\log\caL}]+\caO(N^{-1/2}).
	\end{align}
	Combining the results we obtained so far, we conclude that 
	\beq
	\begin{split}
		&\E\left[(N\langle R^{[1,2]}_{1,1}(m,n))^2\rangle^{[1,2]}-\langle x^{[1]}_N(m)^2x^{[2]}_N(n)^2\rangle^{[1,2]})e^{\ii s\log\caL}\right]
		\\&~~~=\lambda N\E\left[\langle R^{[1,2]}_{1,1}(m,n))^2\rangle^{[1,2]}e^{\ii s\log\caL}\right]+\caO(N^{-1/2})
	\end{split}\nonumber
	\eeq
	Furthermore, this relation and \eqref{eqn:self_second} implies that 
	\begin{equation*}
	\begin{split}
	\E\left[(N\langle (R^{[1,2]}_{1,1}(m,n))^2\rangle^{[1,2]}-\langle x^{[1]}_N(m)^2x^{[2]}_N(n)^2\rangle^{[1,2]})e^{\ii s\log\caL}\right]
	&=\frac{\lambda}{1-\lambda}\E [e^{\ii s\log\caL}]+\caO(N^{-1/2}).
	\end{split}
	\end{equation*}
	This completes the proof of Proposition \ref{prop:est}
\end{proof} 

Plugging the result of Proposition \ref{prop:est} into \eqref{eq:derivative_char}, we arrive at
\begin{align*}
&\phi'_N(\lambda)=\frac{\ii s-s^2}{4}\frac{(k_2-k_1)^2\lambda}{1-\lambda}\phi_N(\lambda)+\caO(N^{-1/2}),
\end{align*}
which proves Theorem \ref{thm:main} for the case when $k_1 \neq 0$ and $w_2=\infty$; see equation \eqref{eq:phi'}. 

As in the case of $k_1=0$ and $w_2<\infty,$ we can easily compute that the terms in the proof of Proposition \ref{prop:est} also follow the same self-consistent equations
\begin{align}
\E\left[\langle f\rangle^{[1,2]}e^{\ii s\log\caL}\right]
&=\frac{\lambda}{1-\lambda}\E [e^{\ii s\log\caL}]+\caO(N^{-1/2})
\end{align}
for any terms $f$ in \eqref{eq:derivative_char} and 
\begin{align}
\E\left[\langle g^2\rangle^{[1,2]}e^{\ii s\log\caL}\right]=\E [e^{\ii s\log\caL}]+\caO(N^{-1/2})
\end{align}
for any  $g=x^{[1]}_N(m)x^{[2]}_N(n),\;x^{[1],(1)}_N(m)x^{[1],(2)}_N(n),\;x^{[1]}_N(m)x^{\ast}_N(n)$ or $x^{[2],(1)}_N(m)x^{[2],(2)}_N(n).$

Further, it is easy to see that the derivative of the characteristic function contains the additional terms from the diagonal entries
\begin{equation*}\label{eq:derivative_char2}
\phi'_N(\lambda)=\phi_{N,o}(\lambda)+\phi_{N,d}(\lambda)
\end{equation*}
where
\begin{equation*}\begin{split}
\phi_{N,o}(\lambda)
&=\frac{s^2}{2}\sum_{m=1}^{k_1}\sum_{n=1}^{k_2}\E\left[(N\langle R^{[1,2]}_{1,1}(m,n)^2\rangle^{[1,2]}-\langle x^{[1]}_N(m)^2x^{[2]}_N(n)^2\rangle^{[1,2]})e^{\ii s\log\caL}\right]\\
&~~~-\frac{s^2}{4}\sum_{m=1}^{k_2}\sum_{n=1}^{k_2}\E\left[(N\langle R^{[2]}_{1,2}(m,n)^2\rangle^{[2]}-\langle x^{[2]}_N(m)^2x^{[2]}_N(n)^2\rangle^{[2]})e^{\ii s\log\caL}\right]\\
&~~~-\frac{s^2}{4}\sum_{m=1}^{k_1}\sum_{n=1}^{k_1}\E\left[(N\langle R^{[1]}_{1,2}(m,n)^2\rangle^{[1]}-\langle x^{[1]}_N(m)^2x^{[1]}_N(n)^2\rangle^{[1]})e^{\ii s\log\caL}\right]\\
&~~~-\frac{\ii s}{4}\sum_{m=1}^{k_2}\sum_{n=1}^{k_2}\E\left[(N\langle R^{[2]}_{1,2}(m,n)^2\rangle^{[2]}-\langle x^{[2]}_N(m)^2x^{[2]}_N(n)^2\rangle^{[2]})e^{\ii s\log\caL}\right]\\
&~~~+\frac{\ii s}{4}\sum_{m=1}^{k_1}\sum_{n=1}^{k_1}\E\left[(N\langle R^{[1]}_{1,2}(m,n)^2\rangle^{[1]}-\langle x^{[1]}_N(m)^2x^{[1]}_N(n)^2\rangle^{[1]})e^{\ii s\log\caL}\right]\\
&~~~+\frac{\ii s}{2}\sum_{m=1}^{k_2}\sum_{n=1}^{k_2}\E\left[(N\langle R^{[2]}_{1,*}(m,n)^2\rangle^{[2]}-\langle x^{[2]}_N(m)^2x^{*}_N(n)^2\rangle^{[2]})e^{\ii s\log\caL}\right]\\
&~~~-\frac{\ii s}{2}\sum_{m=1}^{k_1}\sum_{n=1}^{k_2}\E\left[(N\langle R^{[1]}_{1,*}(m,n)^2\rangle^{[1]}-\langle x^{[1]}_N(m)^2x^{*}_N(n)^2\rangle^{[1]})e^{\ii s\log\caL}\right]
\end{split}
\end{equation*}
and
\begin{equation*}
\begin{split}
&\phi_{N,d}(\lambda)
\\&=\frac{s^2}{w_2}\sum_{m=1}^{k_1}\sum_{n=1}^{k_2}\E\left[\langle x^{[1]}_N(m)^2x^{[2]}_N(n)^2\rangle^{[1,2]}e^{\ii s\log\caL}\right]\\
&\quad-\frac{s^2}{2w_2}\sum_{m=1}^{k_2}\sum_{n=1}^{k_2}\E\left[\langle x^{[2],(1)}_N(m)^2x^{[2],(2)}_N(n)^2\rangle^{[2]}e^{\ii s\log\caL}\right]\\
&\quad-\frac{s^2}{2w_2}\sum_{m=1}^{k_1}\sum_{n=1}^{k_1}\E\left[\langle x^{[1],(1)}_N(m)^2x^{[1],(2)}_N(n)^2\rangle^{[1]}e^{\ii s\log\caL}\right]\\
&\quad-\frac{\ii s}{2w_2}\sum_{m=1}^{k_2}\sum_{n=1}^{k_2}\E\left[\langle x^{[2],(1)}_N(m)^2x^{[2],(2)}_N(n)^2\rangle^{[2]}e^{\ii s\log\caL}\right]\\
&\quad+\frac{\ii s}{2w_2}\sum_{m=1}^{k_1}\sum_{n=1}^{k_1}\E\left[\langle x^{[1],(1)}_N(m)^2x^{[1],(2)}_N(n)^2\rangle^{[1]}e^{\ii s\log\caL}\right]\\
&\quad+\frac{\ii s}{w_2}\sum_{m=1}^{k_2}\sum_{n=1}^{k_2}\E\left[\langle x^{[2]}_N(m)^2x^{\ast}_N(n)^2\rangle^{[2]}e^{\ii s\log\caL}\right]-\frac{\ii s}{w_2}\sum_{m=1}^{k_1}\sum_{n=1}^{k_2}\E\left[\langle x^{[1]}_N(m)^2x^{\ast}_N(n)^2\rangle^{[1]}e^{\ii s\log\caL}\right].
\end{split}
\end{equation*}
Thus, we conclude that the characteristic function is asymptotically the solution of the following initial value problem with $\phi_N(0)=1$ 
\beq \label{eq:phi'_gen}
\phi_N'(\lambda) = \frac{\ii s-s^2}{4} \cdot \frac{(k_1-k_2)^2 \lambda}{1-\lambda}\phi_N(\lambda) + \frac{\ii s-s^2}{2w_2} \cdot (k_1-k_2)^2  \phi_N(\lambda) + O(N^{-\frac{1}{2}}).
\eeq
We omit the detail and complete the proof of Theorem \ref{thm:main}.


\section{Proof of Theorem \ref{thm:CLT} and \ref{thm:trans_CLT}}\label{app:CLT}
In Appendix \ref{app:CLT}, we prove the CLT for the LSS of spiked Wigner matrices. The proof of the CLT for the LSS is based on the strategy of \cite{Bai-Yao2005} in which the LSS is first written as a contour integral of the resolvent of a spiked Wigner matrix. Then, the averaged trace of the resolvent converges to a Gaussian process, which also implies that the limiting distribution of the LSS is Gaussian. 

It is the biggest obstacle in adapting the proof in \cite{Bai-Yao2005} for spiked Wigner matrices that the martingale CLT and covariance computation are hard to be reproduced with spikes; even with the special choice of rank-$1$ spike the proof for the CLT is very tedious as in \cite{Baik-Lee2017}. In \cite{Chung-Lee2019}, the interpolation between a general rank-$1$ spike and the special rank-$1$ spiked was introduced to compare the LSS, based on an ansatz that the mean and the variance of the LSS do not depend on the choice of the spike. In this paper, since we do not have a reference matrix to be compared with as in the rank-$1$ case, we introduce a direct interpolation between a spiked Wigner matrix of rank-$k$ and a Wigner matrix without any spikes. With the interpolation, we find the change of the mean in the limiting Gaussian distribution and also prove that its variance is invariant.

\begin{proof}[Proof of Theorem \ref{thm:CLT}]
	We adapt the proof of Theorem 5 in \cite{Chung-Lee2019} with the following change. Instead of interpolating the spiked Wigner matrices $M$ with the original signal and with the signal with all $1$'s considered in \cite{Baik-Lee2017}, we directly interpolate $M$ and $H$ and track the change of the mean. Consider the following interpolating matrix
	\[
	M(\theta) = \theta \sqrt{\lambda} X X^T + H
	\]
	and the corresponding eigenvalues $\{\mu_i(\theta)\}_{i=1}^{N}$ of $M(\theta)$ for $\theta \in [0, 1]$. 
	Let $\Gamma$ be a rectangular contour in the proof of Theorem 5 in \cite{Chung-Lee2019}. Applying Cauchy's integral formula, we have 
	\beq \label{eq:Cauchy_int}
	\sum_{i=1}^N f(\mu_i(1)) - N \int_{-2}^2 \frac{\sqrt{4-x^2}}{2\pi} f(x) \, \dd x = -\frac{N}{2\pi \ii} \oint_{\Gamma} f(z) \big( s_N(1,z) - s(z) \big) \dd z
	\eeq
	where $s(z) = \frac{-z + \sqrt{z^2 - 4}}{2}$ is the Stieltjes transform of the Wigner semicircle law and $s_N(\theta,z)$ is the Stieltjes transform of the empirical spectral distribution (ESD) of $M(\theta)$ for $\theta\in[0,1]$.
	Note that the normalized trace of the resolvent satisfies
	\beq
	\frac{1}{N} \Tr R(\theta,z) = \frac{1}{N} \sum_{i=1}^N \frac{1}{\mu_i(\theta) -z} = s_N(\theta,z)
	\eeq
	where $R(\theta,z)$ is the resolvent corresponding to $M(\theta)$, defined as
	\beq\label{resolvent}
	R(\theta, z) := (M(\theta) - zI)^{-1}
	\eeq
	for $z \in \C^+$ and $\theta\in[0,1]$. 
	
	The change of the mean in the CLT for $H$ and the CLT for $M$ can be computed by tracking the change of the corresponding resolvent in \eqref{resolvent}, since \eqref{eq:Cauchy_int} can be decomposed by
	\begin{align}
	\sum_{i=1}^N f(\mu_i(1)) - N \int_{-2}^2 \frac{\sqrt{4-x^2}}{2\pi} f(x) \, \dd x 
	&= -\frac{1}{2\pi \ii} \oint_{\Gamma} f(z) \big( \Tr R(1,z) - \Tr R(0,z) \big) \dd z\label{eq:Trace difference}\\
	&~~~~-\frac{1}{2\pi \ii} \oint_{\Gamma} f(z) \big( \Tr R(0,z) - Ns(z) \big) \dd z\label{null_case}
	\end{align} 
	and the fluctuation result of \eqref{null_case} is already given in \cite{Bai-Yao2005}.
	
	Set $\Gamma^{\varepsilon}=\{z\in\bbC:\min_{w\in\Gamma}|z-w|\leq\varepsilon\}.$ Choose $\varepsilon$ so that 
	\[
	\min_{w\in\Gamma^\varepsilon,x\in[-2,2]}|x-w|>2\varepsilon.
	\]
	Following the proof of Theorem 5 in \cite{Chung-Lee2019}, on $z\in\Gamma^{\varepsilon}_{1/2}:=\Gamma^{\varepsilon}\cap \{z\in\bbC:\,|\text{Im} z|>N^{-1/2}\}$, we first find that
	\begin{align}
	\frac{\partial}{\partial \theta} \Tr R(\theta, z) &= -\sum_{m=1}^k \sqrt{\lambda} \frac{\partial}{\partial z} \left( \bsx(m)^T R(\theta, z) \bsx (m) \right) = -k \frac{\partial}{\partial z} \left( \frac{\sqrt{\lambda} s(z)}{1+\theta \sqrt{\lambda} s(z)} \right) + O(N^{-\frac{1}{2}}) \nonumber\\
	&= -\frac{k \sqrt{\lambda} s'(z)}{(1+\theta \sqrt{\lambda} s(z))^2} + O(N^{-\frac{1}{2}})\label{eq:derivative_of_trace}
	\end{align}
	with high probability. More precisely, since the elementary resolvent expansion implies
	\beq
	\begin{split}
		R(0,z)-R(\theta,z)=\theta\sqrt{\lambda}R(\theta,z)\left(\sum_{\ell=1}^{k}\bsx(\ell)\bsx(\ell)^T\right)R(0,z),
	\end{split}\eeq
	we then find that
	\begin{align}\label{res_exp}
	\left( \bsx(m)^T R(0, z) \bsx (m) \right)=\left( \bsx(m)^T R(\theta, z) \bsx (m) \right)+\theta\sqrt{\lambda}\sum_{\ell=1}^k\left( \bsx(m)^T R(\theta, z) \bsx (\ell) \right)\left( \bsx(\ell)^T R(0, z) \bsx (m) \right)\nonumber.
	\end{align} 
	From the rigidity of the eigenvalues, we have a deterministic bound for resolvent
	\beq
	|\left( \bsx(m)^T R(\theta, z) \bsx (\ell) \right)|\leq \lVert R(\theta, z)\rVert\leq C.
	\eeq
	Since columns of spike $\{\bsx(\ell)\}_{\ell=1}^{k}$ are orthonormal, the isotropic local law for $R(0,z)$ implies that 
	\beq
	\left( \bsx(m)^T R(0, z) \bsx (\ell) \right)=s(z)\delta_{m\ell}+\caO(N^{-1/2})
	\eeq
	uniformly on $z\in\Gamma^\varepsilon$ (See Lemma A.1 of Supplementary Material for \cite{Chung-Lee2019}.)
	We then obtain that 
	\begin{align*}
	\left( \bsx(m)^T R(0, z) \bsx (m) \right)=\left( \bsx(m)^T R(\theta, z) \bsx (m) \right)\left[1+\theta\sqrt{\lambda}\left( \bsx(m)^T R(0, z) \bsx (m) \right)\right]+O(N^{-\frac{1}{2}})\nonumber
	\end{align*}
	and so
	\begin{align}
	\left( \bsx(m)^T R(\theta, z) \bsx (m) \right)=\frac{s(z)}{1+\theta\sqrt{\lambda}s(z)}+O(N^{-\frac{1}{2}}).\nonumber
	\end{align}
	This proves \eqref{eq:derivative_of_trace}.
	
	Moreover, on $\Gamma^{\varepsilon},$ we easily check that the exactly same argument holds for a finite rank perturbation of Wigner matrix (e.g. interlacing and rigidity properties). Thus, we conclude that \eqref{eq:Trace difference} is 
	\[\frac{k}{2\pi\ii}\int_\Gamma \frac{\sqrt{\lambda}s'(z)}{1+\sqrt{\lambda}s(z)}f(z)d z+o(1)\]
	with high probability.
	
	Finally, following the computation in the proof of Lemma 4.4 in \cite{Baik-Lee2017}, we then find that the difference between the LSS of $M$ and the LSS of $H$ is
	\beq
	k \sum_{\ell=1}^{\infty} \sqrt{\lambda^{\ell}} \tau_{\ell}(f).
	\eeq
	This proves the desired theorem.
\end{proof}
\begin{proof}[Proof of Theorem \ref{thm:trans_CLT}]
	We adapt the proof of Theorem 7 in \cite{Chung-Lee2019} with the following changes. Let $S$ be the variance matrix of the transformed matrix $\wt M.$ We then find that
	\[S_{ij}=\E[\wt M_{ij}^2]-(\E[\wt M_{ij}])^2=\frac1{N}+\lambda(\gh-\fh) (XX^T)_{ij}^2+\caO(N^{1-8\phi})\]
	and
	\[S_{ii}=\E[\wt M_{ii}^2]-(\E[\wt M_{ii}])^2=\frac{w_2}{N}+\lambda(\gh_d-\fh_d) (XX^T)^2_{ii}+\caO(N^{1-8\phi}).\]
	Normalizing and centering each entry of the matrix $\wt M$, we arrive at another Wigner matrix $W$ where
	\begin{align*}
	&W_{ij}=\frac{1}{\sqrt{NS_{ij}}}(\wt M_{ij}-\E\wt M_{ij}),&&W_{ii}=\sqrt{\frac{w_2}{NS_{ii}}}(\wt M_{ii}-\E\wt M_{ii}).\end{align*}
	Interpolating $W$ and $\wt M-\E[\wt M]$ by $W(\theta)=(1-\theta)W+\theta (\wt M-\E[\wt M])$, $W(\theta)$ is a general Wigner-type matrix with the corresponding quadratic vector equation
	\[-\frac{1}{m_{i}(\theta,z)}=z+\sum_{j=1}^{N}\E[W_{ij}(\theta)^2]\cdot m_j(\theta,z)\]
	where $m_i(\theta,z)\delta_{ij}$ is the limiting distribution of the $(i,j)$-element of the resolvent 
	\[R^W(\theta,z)=(W(\theta)-zI)^{-1}\]
	for $0\leq\theta\leq1.$
	Recall the $s(z)$ is the Stieltjes transform of the Wigner semicircle law. We also directly check that $m_{i}(\theta,z)=s(z)+\caO(N^{-2\phi}).$
	Moreover, the anisotropic local law for the general Wigner-type matrix implies that uniformly on $z\in\Gamma^{\varepsilon}_{1/2}$
	\[(\bsx(m)^TR^{W}(\theta,z)\bsx(\ell))=s(z)\delta_{m\ell}+\caO(N^{-1/2})\]
	(See Lemma D.1 of Supplementary Material for \cite{Chung-Lee2019}.)
	
	Following the proof of Lemmas B.2 and B.3 in \cite{Chung-Lee2019}, we check that 
	\begin{itemize}
		\item Uniformly on $z\in\Gamma^{\varepsilon}_{1/2},$
		\[\Tr R^W(1,z)-\Tr R^W(0,z)=k\lambda(\gh-\fh)s'(z)s(z)+\caO(N^{3/2}N^{-4\phi})\]
		\item Uniformly on $z\in\Gamma^{\varepsilon}\backslash\Gamma^{\varepsilon}_{1/2},$
		\[|\Tr R^W(1,z)-\Tr R^W(0,z)|=\caO(N^{1/3}).\]
	\end{itemize} 
	Our next step is to consider $\wt M=W(1)+\E[\wt M].$ Since 
	\[
	\wt M=W(1)+\sqrt{\lambda \fh}XX^T+\diag(d_1,\cdots,d_N)
	\]
	where $d_i=\E[\wt M_{ii}]-\sqrt{\lambda\fh}(XX^T)_{ii},$ we then find that
	\[ \begin{split}
	&\Tr(\wt M-zI)^{-1}-\Tr R^W(0,z)\\
	&=k\lambda(\gh-\fh)s'(z)s(z)-\frac{k\sqrt{\lambda\fh}s'(z)}{1+\sqrt{\lambda\fh}s(z)}-k\sqrt{\lambda}(\sqrt{\fh_d}-\sqrt{\fh})s'(z)+O(N^{-1/2})
	\end{split} \]
	uniformly on $z\in\Gamma^{\varepsilon}_{1/2}.$
	Thus, we obtain the desired CLT by applying Cauchy's integral formula as in the proof of Theorem \ref{thm:CLT}.
\end{proof}
\end{document}